\def\ds{\displaystyle}
\newcommand{\emptytree}{\begin{tikzpicture}[scale=0.7,baseline=-2pt]
  \node (r) at (0,0) {$\emptyset$};
 \end{tikzpicture}}
 \newcommand{\eun}{\begin{tikzpicture}[scale=0.7,baseline=-2pt]
  \node (r) at (0,0) {${\eta}_1$};
 \end{tikzpicture}}
  \newcommand{\edeux}{\begin{tikzpicture}[scale=0.7,baseline=-2pt]
  \node (r) at (0,0) {${\eta}_2$};
 \end{tikzpicture}}
 \newcommand{\etrois}{\begin{tikzpicture}[scale=0.7,baseline=-2pt]
  \node (r) at (0,0) {${\eta}_3$};
 \end{tikzpicture}}
 \newcommand{\equatre}{\begin{tikzpicture}[scale=0.7,baseline=-2pt]
  \node (r) at (0,0) {${\eta}_4$};
 \end{tikzpicture}}
 \newcommand{\edeuxetrois}{\begin{tikzpicture}[scale=0.7,baseline=-2pt]
  \node (r) at (0,0) {${\eta}_2$};
  \node (a) at (0.6,0) {${\eta}_3$};
 \end{tikzpicture}}
 \newcommand{\Szeroeun}{\begin{tikzpicture}[scale=0.7,baseline=5pt]
  \node (r) at (0,0) {$0$};
  \node (a) at (0,1) {${\eta}_1$};
    \draw (r) --(a) ;
 \end{tikzpicture}}
 \newcommand{\Seunedeux}{\begin{tikzpicture}[scale=0.7,baseline=5pt]
  \node (r) at (0,0) {$\eta_1$};
  \node (a) at (0,1) {${\eta}_2$};
    \draw (r) --(a) ;
 \end{tikzpicture}}
 \newcommand{\Seunetrois}{\begin{tikzpicture}[scale=0.7,baseline=5pt]
  \node (r) at (0,0) {$\eta_1$};
  \node (a) at (0,1) {${\eta}_3$};
    \draw (r) --(a) ;
 \end{tikzpicture}}
  \newcommand{\Setroisequatre}{\begin{tikzpicture}[scale=0.7,baseline=5pt]
  \node (r) at (0,0) {${\eta}_3$};
  \node (a) at (0,1) {${\eta}_4$};
    \draw (r) --(a) ;
 \end{tikzpicture}}
  \newcommand{\Secinqesix}{\begin{tikzpicture}[scale=0.7,baseline=5pt]
  \node (r) at (0,0) {${\eta}_5$};
  \node (a) at (0,1) {${\eta}_6$};
    \draw (r) --(a) ;
 \end{tikzpicture}}
 \newcommand{\Czeroedeuxetrois}{
 \begin{tikzpicture}[scale=0.7,baseline=5pt]
  \node (r) at (0,0) {$0$};
  \node (a) at (-0.5,1) {${\eta}_2$};
  \node (b) at (0.5,1) {${\eta}_3$};
  \draw (r) --(a) ;
  \draw (r) --(b) ;
\end{tikzpicture}}
 \newcommand{\Ceunedeuxetrois}{
 \begin{tikzpicture}[scale=0.7,baseline=5pt]
  \node (r) at (0,0) {${\eta}_1$};
  \node (a) at (-0.5,1) {${\eta}_2$};
  \node (b) at (0.5,1) {${\eta}_3$};
  \draw (r) --(a) ;
  \draw (r) --(b) ;
\end{tikzpicture}}
 \newcommand{\Czeroeunedeuxetrois}{\begin{tikzpicture}[scale=0.7,baseline=5pt]
  \node (r) at (0,0) {$0$};
  \node (a) at (-0.5,1) {${\eta}_1$};
  \node (b) at (0,1) {${\eta}_2$};
   \node (c) at (0.5,1) {${\eta}_3$};
  \draw (r) --(a) ;
  \draw (r) --(b) ;
  \draw (r) --(c) ;
\end{tikzpicture}}
 \newcommand{\BeunedeuxSetroisequatre}{\begin{tikzpicture}[scale=0.7,baseline=5pt]
  \node (r) at (0,0) {${\eta}_1$};
  \node (a) at (-0.5,1) {${\eta}_2$};
  \node (b) at (0.5,1) {${\eta}_3$};
  \node (c) at (0.5,2) {${\eta}_4$};
    \draw (r) --(a) ;
     \draw (r) --(b) --(c);
 \end{tikzpicture}}
\newcommand{\BzeroetroisSedeuxeun}{\begin{tikzpicture}[scale=0.7,baseline=5pt]
  \node (r) at (0,0) {$0$};
 \node (a) at (-0.5,1) {${\eta}_3$};
  \node (b) at (0.5,1) {${\eta}_2$};
  \node (c) at (0.5,2) {${\eta}_1$};
    \draw (r) --(a) ;
     \draw (r) --(b) --(c);
\end{tikzpicture}} 
\newcommand{\BzeroedeuxSetroiseun}{\begin{tikzpicture}[scale=0.7,baseline=5pt]
  \node (r) at (0,0) {$0$};
 \node (a) at (-0.5,1) {${\eta}_2$};
  \node (b) at (0.5,1) {${\eta}_3$};
  \node (c) at (0.5,2) {${\eta}_1$};
    \draw (r) --(a) ;
     \draw (r) --(b) --(c);
\end{tikzpicture}} 
\newcommand{\BeBeunedeuxSetroisequatreSecinqesix}{\begin{tikzpicture}[scale=0.7,baseline=5pt]
  \node (r) at (0,0) {$\eta$};
 \node (a) at (-0.5,1) {${\eta}_1$};
 \node (b) at (-1,2) {${\eta}_2$};
  \node (c) at (0,2) {${\eta}_3$};
  \node (d) at (0,3) {${\eta}_4$};
   \node (e) at (0.5,1) {${\eta}_5$};
    \node (f) at (0.5,2) {${\eta}_6$};
    \draw (r) --(a)--(b);
     \draw (a) --(c)--(d);
     \draw (r) --(e) --(f);
\end{tikzpicture}} 
\newcommand{\Eta}{\mathrm{H}}
\newcommand{\emdash}{---}
\newcommand{\nocomma}{}
\newcommand{\tmem}[1]{{\em #1\/}}
\newcommand{\tmmathbf}[1]{\ensuremath{\boldsymbol{#1}}}
\newcommand{\tmop}[1]{\ensuremath{\operatorname{#1}}}
\newcommand{\tmrsub}[1]{\ensuremath{_{\textrm{#1}}}}
\newcommand{\tmscript}[1]{\text{\scriptsize{$#1$}}}
\newcommand{\tmstrong}[1]{\textbf{#1}}
\newcommand{\tmtextit}[1]{{\itshape{#1}}}
\newcommand{\tmtexttt}[1]{{\ttfamily{#1}}}
\newenvironment{enumeratenumeric}{\begin{enumerate}[1.] }{\end{enumerate}}
\newenvironment{itemizedot}{\begin{itemize} }{\end{itemize}}
\newenvironment{itemizeminus}{\begin{itemize} }{\end{itemize}}
\newenvironment{proof}{\noindent\textbf{Proof\ }}{\hspace*{\fill}$\Box$\medskip}
\definecolor{grey}{rgb}{0.75,0.75,0.75}
\definecolor{orange}{rgb}{1.0,0.5,0.5}
\definecolor{brown}{rgb}{0.5,0.25,0.0}
\definecolor{pink}{rgb}{1.0,0.5,0.5}
\newtheorem{definition}{Definition}
\newtheorem{lemma}{Lemma}
\newtheorem{proposition}{Proposition}
{\theorembodyfont{\rmfamily}\newtheorem{remark}{Remark}}
\newtheorem{theorem}{Theorem}
\begin{document}

\title{Ecalle's arborification--coarborification transforms and
Connes--Kreimer Hopf algebra\\  Les transformations d'arborification--coarborification d'Ecalle et l'alg\`ebre de Hopf de Connes--Kreimer}\author{\and Fr{\'e}d{\'e}ric Fauvet and
Fr{\'e}d{\'e}ric Menous}\maketitle

\begin{abstract}
  We give a natural and complete description of Ecalle's mould--comould
  formalism within a Hopf--algebraic framework. The arborification transform
  thus appears as a factorization of characters, involving the shuffle or
  quasishuffle Hopf algebras, thanks to a universal property satisfied by
  Connes--Kreimer Hopf algebra. We give a straightforward characterization of
  the fundamental process of homogeneous coarborification, using the explicit
  duality between decorated Connes--Kreimer and Grossman--Larson Hopf algebras.
  Finally, we introduce a new Hopf algebra that systematically underlies the
  calculations for the normalization of local dynamical systems.

\end{abstract}

\begin{abstract}

  Nous donnons une description compl{\`e}te et naturelle du formalisme
d'arborification/coarborification d'Ecalle en termes d'alg{\`e}bres de Hopf.
L'arborification appara{\^i}t alors comme une factorisation de caract{\`e}res,
impliquant les alg{\`e}bres shuffle ou quasishuffle, en vertu d'une
prori{\'e}t{\'e} universelle satisfaite par l'alg{\`e}bre de Connes--Kreimer.
Dans ce cadre, nous obtenons de fa{\c c}on directe le proc{\'e}d{\'e}
fondamental de coarborification homog{\`e}ne, en utilisant la dualit{\'e}
explicite entre les alg{\`e}bres de Hopf d{\'e}cor{\'e}es de Connes--Kreimer et
Grossman--Larson. Enfin, nous introduisons une nouvelle alg{\`e}bre de Hopf
qui est sous--jacente aux calculs de normalisation des syst{\`e}mes dynamiques locaux.

\end{abstract}

{\bf Keywords:} Dynamical systems, Normal forms, Hopf algebras,Trees, 

Fa\`a di Bruno, Moulds, Arborification, Coarborification

\vspace{0.3cm}

{\bf Mathematics Subject Classification (2010):} 05E05, 16T05, 34M35

{\tableofcontents}

{\newpage}


\section{Introduction}\label{s:intro}

The local study of dynamical systems, through normalizing transformations,
involves calculations in groups, or pseudogroups, of diffeomorphisms (e. g. formal, or
analytic) that are tangent to identity. Other situations where these
explicit calculations are required are also numerous in key questions of
classification of singular geometric structures. Another source of examples is
given by the so called mechanism of Birkhoff decomposition
({\cite{menous_birkh}}). The group $\tmmathbf{G}$ of formal tangent to
Identity diffeomorphisms is the one in which most of the calculations are to
be performed.

To tackle problems of this kind, Jean Ecalle has developped a powerful
combinatorial environment, named {\tmem{mould calculus}}, that leads to
formulas that are surprisingly explicit. This calculus has lately been the
object of attention within the algebraic combinatorics community
({\cite{chapoton}}, {\cite{thibon_et_al}}). However, despite its striking
achievements, this formalism has been little used in local dynamics, in
pending problems that anyway seem out of reach of other approaches. One reason
might be that it uses a sophisticated system of notations, in which a number
of infinite sums are manipulated, in a way that calls for a number of proofs
and explanations, that are to a certain extent still missing in the few
existing papers using mould calculus. Beside this, some constructions
introduced by Ecalle, though obviously appearing as extraordinarily efficient,
might remain a bit mysterious; an example of this is the so called
{\tmem{homogeneous coarborification}} \ ({\cite{snag}}), for which we are now
able to give in the present paper a very natural algebraic presentation.

In fact, Ecalle's mould--comould formalism can be very naturally recast in a
Hopf--algebraic setting, with the help of a number of Hopf algebras (shuffle,
quasi--shuffle, their graded duals, etc) which are now widely used within
algebraic combinatorics. In the present text, we show how this can be done,
which makes it possible to give simple and quick proofs of important
properties regarding mould calculus.

As is now well known ({\cite{figuer}}, {\cite{car_kef_fig}}), the
Hopf--algebraic formulation of computations on formal diffeomorphisms involves
the so called Fa{\`a} di Bruno Hopf algebra, which encodes the eponymous
formula for higher order chainrule. In fact, Hopf algebraic tools and concepts
have very recently become pervasive in dynamical systems, see e.g. \cite{lund-mk} and the
references therein. Now, {\tmem{an essential point}} is the following: the
reformulation of a classification problem through the use of Fa{\`a} di Bruno
Hopf algebra (or, more simply, calculations on compositions of diffeomorphisms
involving the Fa{\`a} di Bruno formula), although satisfactory at the formal
level \ will usually be inefficient, in the hard cases, for the question of
analyticity of the series.{\tmem{ Indeed, in difficult situations involving
resonances and/or small denominators, the formulas obtained through Fa{\`a} di
Bruno are most of the time not explicit enough to obtain satisfactory growth
estimates on the coefficients}}.

On the other hand, Ecalle's mould--comould expansions often lead to explicit
coefficients but, when trying to control the size of these in a
straightforward way, we often encounter systematic divergence, which claims
for the introduction of something subtler.

So the need was for some sort of {\tmstrong{{\tmem{intermediate Hopf
algebra}}}}, in which the algebraic calculations would still be tractable, and
leading to explicit formulas from which key estimates can be obtained, to
eventually get e.g. the analyticity properties we could expect. This is
exactly what arborification/coarborification does. Once the original
definitions of Ecalle are translated into a Hopf--algebraic setting, with the
use of Connes-Kreimer Hopf algebra $\tmop{CK}$ and its graded dual, it is
possible to recognize that the arborification transform is nothing else that a
property of factorisation of characters between Hopf algebras (we perform this
at the same time for the shuffle and quasishuffle cases), using the fact that
$\tmop{CK}$ is an initial object for Hochschild cohomology for a particular
category of cogebras ({\cite{ck}}, {\cite{foissy1}}, {\cite{foissy2}}).

Thus, the universality of the arborification mechanism is directly and
naturally connected with a universal property satisfied by Connes-Kreimer Hopf
algebra, whose importance is by now widely acknowledged (see e. g.
{\cite{figuer}}).

The paper is organized as follows. In the next section, we recall a few basic
facts \ concerning normalization in local dynamics, focusing on two basic
situations for which it is possible to introduce all the relevant objects in a
simple, yet non trivial, context. The following section is devoted to an
algebraic study of the group of tangent to identity formal diffeomorphisms,
introducing at this stage the Fa{\`a} di Bruno Hopf algebra
$\mathcal{H}_{\tmop{FdB}}$. This section doesn't contain new results, yet we
have chosen a presentation stressing the role of substitution automorphisms,
and adopting a systematic way of looking at normalizing equations as equations
on characters of Hopf algebras which are by now classical objects (basic
terminology and facts on graded Hopf algebras are included).

Then we are ready to interpret moulds, at least the ones with symmetry
properties that are met in practice, as characters or infinitesimal characters
on some classical Hopf algebras, namely symmetral (resp. symmetrel) moulds as
characters of the shuffle (resp. quasishuffle) Hopf algebra. This is the
object of section 4, where the basic notions regarding moulds, comoulds and
their ``contractions'' are given.

In section \ref{s:arbor} the key dual mechanisms of arborification and
coarborifications are introduced, and described through the introduction of
$\tmop{CK}$ and its graded dual, known to be isomorphic to the
Grossman--Larson Hopf algebra

In fact, we show that the natural isomorphism between these two Hopf algebras
leads directly, in the contexts of comoulds, to the process of
{\tmem{homogeneous coarborification}}, which was put forward by Ecalle with
very little explanation. A cautious handling of the symmetry factors of the
trees is crucial, here.

In section \ref{s:dioph} we describe the Hopf algebra $\tmop{CK}^+$ which is ultimately used
in practical calculations of normalizing transformations, for questions of
classification of dynamical systems, involving resonances and small
denominators. This solves at the same time an algebraic problem and a
essential analytic one, regarding the growth estimates of the coefficients of
the diffeomorphisms. The point of view which is enhanced in the present paper
can be summed up in the following considerations:
\begin{itemizedot}
  \item The systematic of substitution automorphisms, which constitute an
  alternative -- a very profitable one, because it is more {\tmem{flexible}}
  {\emdash} to changes of variables, naturally entail a Hopf--algebraic
  presentation
  
  \item Calculations in the Fa{\`a} di Bruno Hopf algebra are a direct mirror of the
  traditional approach through normalizing transformations, yet they don't
  yield results which are explicit enough to tackle difficult cases
  
  \item There is a {\tmem{hierarchy}} Sh/Qsh, $\tmop{CK}$, $\tmop{CK}^+$ of
  Hopf algebras, the first ones adapted to the simple formal classification
  results, the second one necessary for controlling the regularity of the
  formal constructions, under a strong non resonance condition, and the last
  one to take care of objects satisfying a weak nonresonance condition
\end{itemizedot}
The main results of the text are thus the ones which concern the Hopf algebra
$\tmop{CK}^+$, which is the fundamental one to be used by the practitioner, in
difficult problems involving small denominators.

The research leading these results was partially supported by the French National Research Agency under the reference ANR-12-BS01-0017.


\section{Normal forms}\label{s:normforms}

To study a dynamical system, a standard procedure, since Poincar{\'e}, is to
try to conjugate the object to another one which is as simple as possible and
for which the dynamics is well understood, and which is then called a normal
form. The classes of objects that are considered in the present text are
vector fields and diffeomorphisms in $\mathbbm{C}^{\nu}$ and we study them
near a singularity, namely a vanishing point for the field or a \ fixed point
for the diffeomorphism.

Conjugation is thus obtained through the action of a change of coordinates,
performed by a diffeomorphism leaving the singular point invariant. When we
consider, say, an analytic germ of vector field $X$ at the origin, the
simplest field to which we can hope to conjugate it through an analytic change
of coordinates is the linear part $X^{\tmop{lin}}$ of $X$. When trying to do
this, one immediately encounters the possibility of obstructions; indeed, if
the eigenvalues of $X^{\tmop{lin}}$ (supposed semi--simple) are $\lambda_1,
\ldots, \lambda_{\nu}$ (with possible multiplicities), then even the
{\tmem{formal}} conjugation of $X$ to $X^{\tmop{lin}}$ is not possible when
some combinations of the following type do exist:

\begin{equation}
  m_1 \lambda_1 + \ldots m_{\nu} \lambda_{\nu} - \lambda_i = 0 \label{e:reson}
\end{equation}
in this relation, $i \in \{ 1, \ldots, \nu \} ; \tmop{the} m_j \tmop{are}
\tmop{nonnegative} \tmop{integers}, \tmop{with} \sum m_j \geqslant 2$.

Such a relation can also be written as $\langle n, \lambda \rangle = 0$ where
$\lambda = (\lambda_1, \ldots, \lambda_{\nu})$ is the spectrum, $n$ is a
$\nu$--uple of integers that belongs to the following set:

$$\mathcal{N}= \left\{ (n_1, \ldots, n_{\nu}) ; n_i \geqslant - 1, \tmop{at}
\tmop{most} \tmop{one} \tmop{being} = - 1, \tmop{and} \sum_1^{\nu} n_i
\geqslant 1 \right\}$$

In the sequel, we shall use the following notation: $\langle n, \lambda
\rangle = \Sigma n_i \lambda_i$. A relation such as (\ref{e:reson}) is called a {\tmem{resonance}}, and we
focus now on the nonresonant case, for which by definition no resonance
exists. A standard way to obtain the linearization is then to conjugate with
polynomial changes of coordinates our given field to its linear part, up to
terms of a given valuation and then composing these transforms in order to
obtain in the end only the linear part. The absence of resonance ensures that
each step is possible, and the formal convergence of the infinite product is
easy. In the next section, an alternative method is described, which directly
lead to moulds. In this way we obtain a unique linearizing transformation, if
we impose that it be tangent to identity.

Technically, it might happen that under the non resonance condition given
above, some of the partial sums might vanish. If we want to avoid this, we
have to consider a stronger non resonance condition, namely that the
$\lambda_i$ are independant over $\mathbbm{Z}$. We shall below work out the
algebraic formulation using the strong condition, and eventually in section 6
we will be able to cope with the weaker one, once the appropriate Hopf algebra
has been defined.

However, the formal transform will not always be convergent : in the process
of computation of the linearization transform, whatever the chosen method, we
encounter divisions by expressions $m_1 \lambda_1 + \ldots m_{\nu}
\lambda_{\nu} - \lambda_i$, which, although non zero, might be very small.
This problem of occurence of {\tmem{small denominators}} calls for an extra
hypothesis on the spectrum, in order to control the size of the coefficients
of the series we are interested in. The original breakthrough was made by
Siegel, and the best (it is known to be optimal in dimension 2) diophantine
condition so far, is Brjuno's condition:
\[ \sum \frac{1}{2^k} \tmop{Log} ( \frac{1}{\Omega (2^{k + 1})}) < \infty \]
where $\Omega (h) = \min \left\{ | \langle n, \lambda \rangle |, \sum n_i
\leqslant h \right\}$

Under this condition it can be shown (Brjuno, \cite{brjuno}) that the
normalization tranform is indeed \ analytic.

The classification problem for germs of diffeomorphisms goes along the same
lines: we consider a diffeomorphism $\varphi$ at the origin of $0$ in
$\mathbbm{C}^{\nu}$ and we wish to conjugate it to its linear part
$\varphi^{\tmop{lin}}$, with $\varphi^{\tmop{lin}}  (x) = (l_1 x_1, \ldots,
l_{\nu} x_{\nu})$. In that case, a resonance can be written as
\[ l_1^{m_1} \ldots l_{\nu}^{m_{\nu}} - 1 = 0 \]
with the exponents $( m_i) $as above: \ $m = (m_1, \ldots, m_{\nu}) \tmop{is}
\tmop{in} \mathcal{N}$. In the absence of resonance, such a diffeomorphism is formally
conjugate to its linear part, and this can be proved by the same method as for
fields.

Here also, we shall have to consider a strong non resonance condition, namely
that no relation of the above type vanishes, for any family of coefficients
$m_i$ in $\mathbbm{Z}$.

Under the following diophantine hypothesis, it is known ({\cite{russman}})
that the linearizing transform is analytic.
\[ \sum \frac{1}{2^k} \tmop{Log} ( \frac{1}{\Omega (2^{k + 1})}) < \infty \]
where $\Omega (h) = \min \left\{  | l_1^{m_1} \ldots l_{\nu}^{m_{\nu}} - 1 |,
\sum m_i \leqslant h \right\}$

In dimension one, there is a unique tangent to identity formal diffeomorphism $h$ that
conjugates a given diffeomorphism $g : x \longrightarrow \lambda x + \Sigma g_n x^n$ \
to its linear part $g_l$ , provided $g_l$ is not a periodic rotation (the non
resonant case), and its coefficients are given by an explicit but already
somewhat complicated recursive expression :
\[ h_n = \frac{1}{\lambda^n - \lambda} \left[ g_n + \sum_2^{n - 1} g_p
   \sum_{j_1 + \ldots + j_p = n_{}} h_{j_1} \ldots h_{j_p} \right] \]

These formulas are of little help in directly proving the most delicate
analytic linearization results, already in the lowest dimension, let alone in
dimension greater than 1.

As a remark, let us mention that the required calculations involving
compositions of diffeomorphisms are essentially of the same type when one is
interested in classifications of geometric structures with singularities.
 Consequently, the algebraic formalism developped below
can also be used for these problems, in cases where the complexity of the
problem tends to make other techniques inoperant (e.g. singular Poisson
structures displaying resonances, in a context of small denominators).

Although we consider as examples the cases of non--resonant germs of vector
fields or germs of diffeomorphisms, in any dimension at the origin of
$\mathbbm{C}$, all the algebraic structures, as well as Ecalle's constructions
that come into play by following these basic situations as leading thread, \
are of a universal nature, as notably the Hopf algebra of section 6.

\section{Algebraic structures on the group $\tmmathbf{G}$ of tangent to identity
diffeomorphisms}\label{s:G}

\subsection{The Lie algebra $\tmmathbf{g}$ of formal vector fields.}

We consider now the group $\tmmathbf{G}$ of formal diffeomorphisms that are
tangent to Identity at the origin of $\mathbbm{C}^\nu$:
\[
\tmmathbf{G} = \{ \varphi = ({\varphi}_1 , ... , {\varphi}_\nu ): x=(x_1,\dots,x_{\nu})\longmapsto x+ \text{h.o.t.} \}
\]
It is well-known that the group ${\tmmathbf{G}}$ is the
Lie group of the Lie algebra $\tmmathbf{g}$ of formal vector fields:
\[ \tmmathbf{g}= \left\{ X = \sum_{i = 1}^{\nu} X_i (x) \partial_{x_i},
   \hspace{1em} X_i (x) \in \mathbbm{C}_{\geqslant 2} [[x]] \right\} \]
where $\mathbbm{C}_{\geqslant 2} [[x]]$ denotes formal power series in the variables $x=(x_1,\dots,x_{\nu})$ of total
valuation greater than 1. Even if we deal with formal power series, the
``geometric'' interpretation goes as follows: for a given vector field $X$,
consider the differential system:
\[ \left\{ \begin{array}{lll}
     y_1' (t) & = & X_1 (y_1 (t), \ldots, y_{\nu} (t))\\
     & \vdots & \\
     y_{\nu}' (t) & = & X_{\nu} (y_1 (t), \ldots, y_{\nu} (t))
   \end{array} \right. \]
with the initial conditions $y (0) = (y_1 (0), \ldots, y_{\nu} (0)) = (x_1,
\ldots, x_{\nu}) = x$. Even formally, the solution at time $t$ is given by $y
(t) = \varphi^t (x)$ where $\varphi^t \in \tmmathbf{G}$ and $\varphi^t \circ
\varphi^s = \varphi^{t + s}$. Namely, $\varphi^t$ is the flow of the vector
field $X$, whose exponential is simply $\exp (X) = \varphi^1$.

This correspondance is bijective ($X = \log (\varphi)$) as we shall see in
the following section. Note that the computations are not so easy to handle
but become clear, once diffeomorphisms are interpreted through their action on
formal power series.

\subsection{The action of $\tmmathbf{G}$ and substitution automorphisms.}

From the definition of $\tmmathbf{g}$ it is easy to derive its action on a
formal power series $f$ , by the chain--rule formula: $( f ( x ( t)))' = (
X.f) ( x ( t))$. If $X = \sum_{i = 1}^{\nu} X_i (x) \partial_{x_i}$,
\[ X.f = \sum_{i = 1}^{\nu} X_i (x) \partial_{x_i} f \]
as a vector field is a differential operator. Moreover, it is a derivation on
$\mathbbm{C} [[x]]$ since
\[ X. (f g) = (X.f) g + f (X.g) \]
Similarly the natural action of a diffeomorphism $\varphi$ on a series $f$ is
given by
\[ (f \vartriangleleft \varphi) (x) = (\Theta_{\varphi} .f) (x) = f \circ \varphi
   (x) \]
This defines a right action of the group $\tmmathbf{G}$ on the algebra
$\mathbbm{C} [[x]]$ and $\Theta_{\varphi}$ is the substitution automorphism
associated to $\varphi$ :
\[ \begin{array}{c}
     \Theta_{\varphi} .\Theta_{\psi} .f = (f \vartriangleleft \psi) \vartriangleleft
     \varphi = \Theta_{\psi \circ \varphi} .f\\
     \Theta_{\varphi} . (f g) = (f g) \vartriangleleft \varphi = (f
     \vartriangleleft \varphi) (g \vartriangleleft \varphi) = (\Theta_{\varphi} .f)
     (\Theta_{\varphi} .g)
   \end{array} \]
Let us focus on such substitution automorphisms, since they are one of the key
ingredients to perform mould calculus.

\begin{proposition}
  Let $\tilde{\tmmathbf{G}}$ be the subset of linear endomorphisms $\Theta$,
  which are continuous wrt the Krull topology \ of $\mathbbm{C} [[x]]$ such
  that
  \begin{enumeratenumeric}
    \item $\Theta (x) = \Theta .x = \varphi (x) \in \tmmathbf{G}$.
    
    \item For any series $f$, $g$, $\Theta (f g) = \Theta . (f g) = (\Theta
    .f) (\Theta .g)$
  \end{enumeratenumeric}
  then $\tilde{\tmmathbf{G}}$ is a group (the group of substitution
  automorphisms) and the (``evaluation'') map $\tmop{ev}$ defined by
  \[ \tmop{ev} (\Theta) = \Theta .x \in \tmmathbf{G} \]
  is an anti--isomorphism of groups.
\end{proposition}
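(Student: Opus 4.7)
The plan is to establish a bijection between $\tilde{\tmmathbf{G}}$ and $\tmmathbf{G}$ via $\tmop{ev}$ and then transport the (opposite) group structure from $\tmmathbf{G}$ onto $\tilde{\tmmathbf{G}}$.

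First, I would establish that $\tmop{ev}$ is \emph{injective}. Any $\Theta \in \tilde{\tmmathbf{G}}$ is linear and satisfies $\Theta(fg)=(\Theta.f)(\Theta.g)$, hence is an algebra morphism of $\mathbb{C}[[x]]$. Together with the $\nu$-tuple $\Theta.x = (\Theta.x_1,\ldots,\Theta.x_\nu)$, this determines $\Theta$ uniquely on the polynomial subalgebra $\mathbb{C}[x_1,\ldots,x_\nu]$, and Krull-continuity extends the action uniquely to all of $\mathbb{C}[[x]]$, every formal series being the Krull-limit of its polynomial truncations. Thus $\tmop{ev}(\Theta)$ encodes $\Theta$ completely.

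For \emph{surjectivity}, I would check that, given any $\varphi \in \tmmathbf{G}$, the prescription $\Theta_\varphi.f := f\circ \varphi$ defines an element of $\tilde{\tmmathbf{G}}$. Because $\varphi$ is tangent to Identity, each component $\varphi_i$ has zero constant term, so substitution of $\varphi$ into $f\in \mathbb{C}[[x]]$ is Krull-summable and yields a well-defined formal series; linearity and multiplicativity are immediate, and continuity follows from the fact that substitution by a series of valuation at least one preserves or raises the $x$-adic valuation. Next, for the group structure and anti-morphism property I would use the identity $\Theta_\varphi \circ \Theta_\psi = \Theta_{\psi\circ\varphi}$ already displayed in the text: it provides closure under composition, and inverses exist because $\tmmathbf{G}$ is itself a group under composition of tangent-to-Identity formal diffeomorphisms, so that $\Theta_{\varphi^{-1}}$ is a two-sided inverse of $\Theta_\varphi$. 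Applying $\tmop{ev}$ to the composition law yields
\[ \tmop{ev}(\Theta_\varphi \circ \Theta_\psi) = \psi\circ\varphi = \tmop{ev}(\Theta_\psi)\circ \tmop{ev}(\Theta_\varphi), \]
making $\tmop{ev}$ an anti-isomorphism of groups.

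The most delicate point, in my view, is handling the continuity step cleanly: one must verify that axiom (2), which is a priori only a finite-product statement, upgraded through Krull-continuity, really produces an algebra endomorphism that coincides with the naive substitution $f\mapsto f\circ \varphi$ on all of $\mathbb{C}[[x]]$. The tangent-to-Identity hypothesis is essential here, since it ensures $\Theta.x^\alpha = \varphi^\alpha$ has $x$-adic valuation at least $|\alpha|$, so that on expanding $f = \sum_\alpha f_\alpha x^\alpha$ the formal sum $\sum_\alpha f_\alpha \varphi^\alpha$ remains Krull-convergent, and this is what legitimates the passage from polynomials to arbitrary formal series.
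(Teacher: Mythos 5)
Your proof is correct and follows essentially the same route as the paper: use multiplicativity to determine $\Theta$ on monomials, linearity and Krull-continuity to conclude $\Theta.f = f\circ\tmop{ev}(\Theta)$, and the composition identity $\Theta_{\varphi}\circ\Theta_{\psi}=\Theta_{\psi\circ\varphi}$ to get the anti-isomorphism. You merely spell out the surjectivity, group-axiom, and convergence details that the paper leaves implicit in its one-line argument.
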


\begin{proof}
  The proof is straightforward : consider a monomial $x^n = x_1^{n_1} \ldots
  x_{\nu}^{n_{\nu}}$ . Because of the second property,
  \[ \Theta . (x^n) = \varphi_1^{n_1} \ldots \varphi_{\nu}^{n_{\nu}}
     \hspace{1em} (\Theta .x = \varphi (x) = (\varphi_1 (x), \ldots,
     \varphi_{\nu} (x)) \]
  By linearity and continuity,
  \[ \Theta .f = f \circ \tmop{ev} (\Theta) \]
  and the proposition follows, noticing that $ev (\Theta_1 \Theta_2) = \varphi_2 \circ \varphi_1$ (whence the anti--isomorphism property).
\end{proof}

In the sequel we shall identify $\tmmathbf{G}$ with $\tilde{\tmmathbf{G}}$
when needed, taking advantage of the fact that such substitution
automorphisms, as vector fields in $\tmmathbf{g}$, can be seen as differential
operators :

\begin{proposition}
  Let $\varphi = x + u (x) \in \tmmathbf{G}$, then
  \[ \tmop{ev}^{- 1} (\varphi) = \Theta_{\varphi} = \tmop{Id}_{\mathbbm{C}
     [[x]]} + \hspace{-3mm}\sum_{\tmscript{\begin{array}{c}
       n_1 + \ldots + n_{\nu} \geqslant 1\\
       n_i \geqslant 0
     \end{array}}} \hspace{-2mm}\frac{1}{n_1 ! \ldots n_{\nu} !} u_1^{n_1} (x) \ldots
     u_{\nu}^{n_{\nu}} (x) \partial_{x_1}^{n_1} \ldots
     \partial_{x_{\nu}}^{n_{\nu}} \]
\end{proposition}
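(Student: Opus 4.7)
The plan is to read this formula as the multivariate formal Taylor expansion of $f\circ\varphi$ around $x$ in the direction $u(x)$, and then invoke the previous proposition which says that $\Theta_\varphi$ is entirely determined by its action on series via $\Theta_\varphi.f=f\circ\varphi$.

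First I would unwind the statement. Write $\varphi=x+u(x)$ with $u=(u_1,\ldots,u_\nu)\in(\mathbb{C}_{\geqslant 2}[[x]])^\nu$. By the preceding proposition, $\Theta_\varphi.f=f\circ\varphi=f(x+u(x))$ for every $f\in\mathbb{C}[[x]]$. Using standard multi-index notation $n=(n_1,\ldots,n_\nu)$, $n!=n_1!\cdots n_\nu!$, $u^n=u_1^{n_1}\cdots u_\nu^{n_\nu}$ and $\partial^n=\partial_{x_1}^{n_1}\cdots\partial_{x_\nu}^{n_\nu}$, the formula to prove is
\[
\Theta_\varphi.f \;=\; \sum_{n\geqslant 0}\frac{1}{n!}\,u^n(x)\,\partial^n f(x),
\]
where the $n=0$ summand reproduces the $\mathrm{Id}_{\mathbb{C}[[x]]}$ term and the remaining summands reproduce the explicit sum in the statement.

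Next I would verify the identity on monomials. For $f=x^m$ one has $\partial^n x^m=\frac{m!}{(m-n)!}\,x^{m-n}$ when $n\leqslant m$ (componentwise), and $0$ otherwise, so
\[
\sum_{n\geqslant 0}\frac{u^n}{n!}\partial^n x^m \;=\; \sum_{0\leqslant n\leqslant m}\binom{m}{n}u^n x^{m-n} \;=\; \prod_{i=1}^\nu(x_i+u_i(x))^{m_i} \;=\; \varphi(x)^m \;=\; x^m\circ\varphi,
\]
which is exactly $\Theta_\varphi.(x^m)$. By linearity the formula then holds on the polynomial algebra $\mathbb{C}[x]$.

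To extend to all of $\mathbb{C}[[x]]$ by continuity for the Krull topology, I need to check that the infinite sum on the right-hand side is actually well defined as a continuous operator. Since each $u_i$ has valuation $\geqslant 2$, the factor $u^n$ has valuation at least $2(n_1+\cdots+n_\nu)$, so for a fixed target total degree $N$ only finitely many multi-indices $n$ contribute a nonzero coefficient to the degree-$N$ part of $\sum\frac{u^n}{n!}\partial^n f$; this gives a well-defined continuous endomorphism of $\mathbb{C}[[x]]$ which agrees with $\Theta_\varphi$ on the Krull-dense subspace $\mathbb{C}[x]$, hence on all of $\mathbb{C}[[x]]$.

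The only mildly delicate point is this convergence/extension step: one has to be careful that the differential-operator formula and the composition formula are compared in the same topology. Everything else is the usual multivariate Taylor identity, and no real difficulty is hiding.
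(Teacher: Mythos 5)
Your proof is correct and follows essentially the same route as the paper, which simply asserts that the identity is the (formal) Taylor formula $\Theta_\varphi.f(x)=f(x+u(x))=\sum_n \frac{1}{n!}u^n(x)\partial^n f(x)$. Your verification on monomials via the binomial theorem, together with the observation that $\mathrm{val}(u^n)\geqslant 2|n|$ makes the operator sum well defined and Krull-continuous so that agreement on $\mathbb{C}[x]$ suffices, is just a careful justification of that same Taylor identity in the formal setting.
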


This is simply the Taylor formula :
\[ \begin{array}{ccc}
     \Theta_{\varphi} .f (x) & = & f (x + u (x))\\
     & = & \ds f (x) + \hspace{-5mm}\sum_{\tmscript{\begin{array}{c}
       n_1 + \ldots + n_{\nu} \geqslant 1\\
       n_i \geqslant 0
     \end{array}}} \hspace{-2mm}\frac{1}{n_1 ! \ldots n_{\nu} !} u_1^{n_1} (x) \ldots
     u_{\nu}^{n_{\nu}} (x) \partial_{x_1}^{n_1} \ldots
     \partial_{x_{\nu}}^{n_{\nu}} f (x)
   \end{array} \]

There is still some work to do to perform mould calculus, but one can already
use this to define explicitely the exponential of a vector field. If $X \in
\tmmathbf{g}$, then, for any real number $t$, the differential operator
\[ \Theta^t = \exp (t X) = \tmop{Id} + \sum_{s \geqslant 1} \frac{t^s}{s!} X^s
\]
is a well-defined substitution automorphism and, if $\varphi^t = \tmop{ev}
(\Theta^t)$, it is the flow of the vector field $X$. Conversely, for a given
diffeomorphism $\varphi$, if $\Theta$ is its substitution automorphism then
this is the flow at time $t = 1$ of the vector field
\[ X = \log (\Theta) = \log (\tmop{Id} + (\Theta - \tmop{Id})) = \sum_{s
   \geqslant 1} \frac{(- 1)^{s - 1}}{s} (\Theta - \tmop{Id})^s \]
We leave the proof to the reader (see also {\cite{il-yak}}).

\subsection{Degrees and homogeneous components.}

Vector fields and diffeomorphisms are made of power series, namely series of
monomials $x^n = x_1^{n_1} \ldots x_{\nu}^{n_{\nu}}$ of degree $n = (n_1, \ldots,
n_{\nu})$, but what is relevant for such objects in the context of
normalization is not the degree, but the notion of {\tmem{homogeneous
components}} related to their action on monomials.

More precisely, a formal power series is given by
\[ f (x) = \sum_{n \in \mathbbm{N}^{\nu}} f_n x^n \]
where $n = (n_1, \ldots, n_{\nu}) \in \mathbbm{N}^{\nu}$ is the degree of $x^n
= x_1^{n_1} \ldots x_{\nu}^{n_{\nu}}$ and $|n| = n_1 + \ldots + n_{\nu}$ is
its total degree. Such monomial are very well adapted to the product of power
series: if \ $f (x) = \ds \sum_{n \in \mathbbm{N}^{\nu}} f_n x^n$ and $g (x) =
\ds \sum_{n \in \mathbbm{N}^{\nu}} g_n x^n$, then their product $h (x) = f (x) g
(x) = \ds \sum_{n \in \mathbbm{N}^{\nu}} h_n x^n$ is such that
\[ h_n = \sum_{k + l = n} f_k g_l \]
but, for example, if one considers an elementary vector field $X_{i, n} = x^n
\partial_{x_i}$ then
\[ X_{i, n} .x^m = m_i x^{n + m - e_i} \]
where $e_i$ is the element of $\mathbbm{N}^{\nu}$ whose $i^{\tmop{th}}$ entry
(resp. $j^{\tmop{th}}$ entry with $j \neq i$) is 1 (resp. 0). Regarding its
action on monomials such a vector field is ``homogeneous'' of degree $\eta = n
- e_i$ and this will be the right notion of degree for vector fields and
diffeomorphisms. This suggests the following notation : for $1 \leqslant i
\leqslant \nu$, let
\[ H_i = \{\eta = n - e_i, \hspace{1em} n \in \mathbbm{N}^{\nu}, |n|
   \geqslant 2\}. \]
For any $1 \leqslant i \leqslant \nu$ and $\eta \in H_i$, $| \eta |
\geqslant 1$ and any vector field $X$ in $\tmmathbf{g}$ can be written
\[ X = \sum_{i = 1}^{\nu} \sum_{\eta \in H_i} b^i_{\eta} x^{\eta} x_i
   \partial_{x_i} = \sum_{\eta \in H} \sum_{i ; \eta \in H_i} b^i_{\eta}
   x^{\eta} x_i \partial_{x_i} \]
where $H = H_1 \cup \ldots \cup H_{\nu}$. We will note
\[ \mathbbm{B}_{\eta} = \sum_{i ; \eta \in H_i} b^i_{\eta} x^{\eta} x_i
   \partial_{x_i} = \sum_i b^i_{\eta} x^{\eta} x_i \partial_{x_i} \]
assuming that the sum is restricted to the indices $i$ such that $\eta \in
H_i$. With this notation, $X$ is decomposed in ``homogeneous''
 components:
\[ X = \sum_{\eta \in H} \mathbbm{B}_{\eta} \]
where, for any $n \in \mathbbm{N}^{\nu}$ and $\eta \in H$, $\mathbbm{B}_{\eta}
.x^n$ is a monomial of degree $n + \eta$ (resp. 0) if $n + \eta \in
\mathbbm{N}^{\nu}$ (resp. $n + \eta$ is not in $\mathbbm{N}^{\nu}$).

On the same way, a diffeomorphism $\varphi = (\varphi_1, \ldots,
\varphi_{\nu})$ is given by $\nu$ series
\[ \varphi_i (x) = x_i \left( 1 + \sum_{\eta \in H_i} \varphi^i_{\eta}
   x^{\eta} \right) \]
and if $\bar{H}$ is the additive semigroup generated by $H$, then its
associated substitution automorphism can be decomposed in homogeneous
components
\[ \Theta = \tmop{Id} + \sum_{\eta \in \bar{H}} \mathbbm{D}_{\eta} \]
where
\[ \mathbbm{D}_{\eta} = \sum_{s \geqslant 1} \sum_{\tmscript{\begin{array}{c}
     1 \leqslant i_1, \ldots, i_s \leqslant \nu\\
     \eta = \eta_1 + \ldots + \eta_s\\
     \eta_k \in H_{i_k}
   \end{array}}} \frac{1}{s!} \varphi^{i_1}_{\eta_1} \ldots
   \varphi^{i_s}_{\eta_s} x^{\eta + e_{i_1} + \ldots + e_{i_s}}
   \partial_{x_{i_1}} \ldots \partial_{x_{i_s}} \]
with finite sums (for any given $\eta$), thanks to the fact that $| \eta_k |
\geqslant 1$ (thus $s \leqslant | \eta |$). This can be seen using the Taylor
expansion of $f \circ \varphi (x)$ and it gives a first flavour of mould
calculus :
\begin{itemizeminus}
  \item If $F = \exp (X)$ with $X = \sum_{\eta \in H} \mathbbm{B}_{\eta}$,
  then
  \[ \Theta = \tmop{Id} + \sum_{s \geqslant 1} \sum_{\eta_1, \ldots, \eta_s
     \in H} \frac{1}{s!} \mathbbm{B}_{\eta_s} \cdots \mathbbm{B}_{\eta_1} \]
  \item If $X = \log (\varphi)$ with $\Theta_{\varphi} = \tmop{Id} +
  \sum_{\eta \in \bar{H}} \mathbbm{D}_{\eta}$, then
  \[ X = \sum_{s \geqslant 1} \sum_{\eta_1, \ldots, \eta_s \in \bar{H}}
     \frac{(- 1)^{s - 1}}{s} \mathbbm{D}_{\eta_s} \cdots \mathbbm{D}_{\eta_1}
  \]
\end{itemizeminus}
These are in fact two examples of mould--comould expansions. We postpone now
the definition and study of mould expansions that will be very useful to deal
with linearization equations, namely when we conjugate a given dynamical
system to its linear part. But such decompositions can also be used to get the
Fa{\`a} di Bruno formulas.

\begin{proposition}
  Let $\varphi$ and $\psi$ in $\tmmathbf{G}$ and $\phi = \varphi \circ \psi$,
  then for $1 \leqslant i \leqslant \nu$,
  \[ \phi_i (x) = x_i \left( 1 + \sum_{\eta \in H_i} \phi^i_{\eta} x^{\eta}
     \right) \]
  with, for $\eta \in H_i$,
  \[ \phi^i_{\eta} = \varphi^i_{\eta} + \psi^i_{\eta} + \sum_{s \geqslant 2}
     \sum_{\tmscript{\begin{array}{c}
       i_1 = i\\
       1 \leqslant i_2, \ldots, i_s \leqslant \nu\\
       \eta = \eta_1 + \ldots + \eta_s\\
       \eta_k \in H_{i_k}
     \end{array}}} \frac{1}{(s - 1) !} P^{\eta_1 + e_i}_{i_2, \ldots, i_s}
     \hspace{1em} \varphi^{i_{}}_{\eta_1} \psi^{i_2}_{\eta_2} \ldots
     \psi^{i_s}_{\eta_s} \label{FdBf} \]
  where $P^{\eta_1 + e_i}_{i_2, \ldots, i_s}$ are integers, independant of
  $\varphi$ and $\psi$.
\end{proposition}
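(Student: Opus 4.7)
The approach is to use the identity $\phi_i = \varphi_i \circ \psi$ together with the Taylor--type formula for the substitution automorphism $\Theta_\psi$ established earlier. Writing $u_j(x) = \psi_j(x) - x_j = x_j \sum_{\mu \in H_j} \psi^j_\mu x^\mu$, this gives
\[
\phi_i(x) \;=\; \Theta_\psi . \varphi_i(x) \;=\; \sum_{n \in \mathbbm{N}^{\nu}} \frac{1}{n!}\, u(x)^n\, \partial^n \varphi_i(x),
\]
and the usual multinomial identity
$\sum_n \frac{1}{n!}\, u^n \partial^n = \sum_{s' \geqslant 0} \frac{1}{s'!} \sum_{i_2, \ldots, i_{s'+1}} u_{i_2} \cdots u_{i_{s'+1}} \partial_{x_{i_2}} \cdots \partial_{x_{i_{s'+1}}}$
lets us rewrite the sum as one indexed by ordered tuples of indices.

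Next, split $\varphi_i = x_i + \sum_{\eta_1 \in H_i} \varphi^i_{\eta_1} x^{\eta_1 + e_i}$ and inject into the expansion above. Three families of contributions then emerge. The $s' = 0$ term is $\varphi_i(x)$ itself, which produces the summand $\varphi^i_\eta$. The $s' = 1$ term applied to the linear piece $x_i$ collapses to $u_i = x_i \sum_\mu \psi^i_\mu x^\mu$, producing the lone $\psi^i_\eta$; derivatives of order $\geqslant 2$ annihilate $x_i$, so this is the only way a $\psi$ coefficient can appear without a $\varphi$ factor. Every remaining term comes from differentiating some monomial $\varphi^i_{\eta_1} x^{\eta_1 + e_i}$, and therefore pairs exactly one factor $\varphi^i_{\eta_1}$ with $s' = s - 1$ factors $\psi^{i_k}_{\eta_k}$ extracted from the $u$'s.

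For this third family, substitute $u_{i_k} = x_{i_k} \sum_{\eta_k \in H_{i_k}} \psi^{i_k}_{\eta_k} x^{\eta_k}$ and compute
\[
\partial_{x_{i_2}} \cdots \partial_{x_{i_s}}\, x^{\eta_1 + e_i} \;=\; P^{\eta_1 + e_i}_{i_2, \ldots, i_s}\, x^{\eta_1 + e_i - e_{i_2} - \cdots - e_{i_s}},
\]
where $P^{n}_{i_2, \ldots, i_s} = \prod_{j=1}^{\nu} n_j (n_j - 1) \cdots (n_j - m_j + 1)$ with $m_j$ the number of occurrences of the index $j$ in the tuple $(i_2, \ldots, i_s)$ (and $P = 0$ if some $m_j > n_j$). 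These are manifestly integers, independent of $\varphi$ and $\psi$. Multiplying through, the $x_{i_k}$ pulled out of each $u$ exactly cancels the $e_{i_k}$ subtracted by the corresponding derivative, so the resulting monomial collapses to $x_i \cdot x^{\eta_1 + \cdots + \eta_s}$; reading off the coefficient of $x_i x^\eta$ with the prefactor $\frac{1}{s'!} = \frac{1}{(s-1)!}$ yields the announced formula.

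The only delicate point is the bookkeeping of this final cancellation --- verifying that the combined exponent of $x$ is indeed $\eta_1 + \eta_2 + \cdots + \eta_s + e_i$, which forces $\eta = \sum_k \eta_k$ and automatically places $\eta$ in $H_i$ whenever $P \neq 0$. Once this is checked, the integrality of $P^{\eta_1 + e_i}_{i_2, \ldots, i_s}$ and its independence from $\varphi, \psi$ are immediate from its expression as a product of falling factorials.
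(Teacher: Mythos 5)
Your proof is correct and follows essentially the same route as the paper: the paper computes $\Theta_\psi.\Theta_\varphi.x_i$ by decomposing both substitution automorphisms into homogeneous components and reducing to $\mathbb{E}_\mu.x^{\eta_1+e_i}$, which is exactly your Taylor expansion of $\Theta_\psi$ applied to the monomials of $\varphi_i$, with the same bookkeeping of the $\frac{1}{(s-1)!}$ factor and the same derivative-of-a-monomial origin for $P^{\eta_1+e_i}_{i_2,\ldots,i_s}$. Your explicit falling-factorial expression for $P$ is a slight refinement of the paper's implicit formula $P^{n}_{i_1,\ldots,i_s} = x^{-n+e_{i_1}+\cdots+e_{i_s}}\partial_{x_{i_1}}\cdots\partial_{x_{i_s}}x^{n}$, but the argument is otherwise the same.
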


\begin{proof}
  Let $\varphi$ and $\psi$ in $\tmmathbf{G}$ and $\phi = \varphi \circ \psi$.
  We can write
  \[ \begin{array}{c}
       \Theta_{\varphi} = \ds \tmop{Id} + \sum_{\eta \in \bar{H}}
       \mathbbm{D}_{\eta}\\
       \Theta_{\psi} = \ds \tmop{Id} + \sum_{\eta \in \bar{H}}
       \mathbbm{E}_{\eta}
     \end{array} \]
  where
  \[ \begin{array}{c}\ds
       \mathbbm{D}_{\eta} = \sum_{s \geqslant 1}
       \sum_{\tmscript{\begin{array}{c}
         1 \leqslant i_1, \ldots, i_s \leqslant \nu\\
         \eta = \eta_1 + \ldots + \eta_s\\
         \eta_k \in H_{i_k}
       \end{array}}} \frac{1}{s!} \varphi^{i_1}_{\eta_1} \ldots
       \varphi^{i_s}_{\eta_s} x^{\eta + e_{i_1} + \ldots + e_{i_s}}
       \partial_{x_{i_1}} \ldots \partial_{x_{i_s}}\\
       \ds\mathbbm{E}_{\eta} = \sum_{s \geqslant 1}
       \sum_{\tmscript{\begin{array}{c}
         1 \leqslant i_1, \ldots, i_s \leqslant \nu\\
         \eta = \eta_1 + \ldots + \eta_s\\
         \eta_k \in H_{i_k}
       \end{array}}} \frac{1}{s!} \psi^{i_1}_{\eta_1} \ldots
       \psi^{i_s}_{\eta_s} x^{\eta + e_{i_1} + \ldots + e_{i_s}}
       \partial_{x_{i_1}} \ldots \partial_{x_{i_s}}
     \end{array} \]
  and $\phi_i (x) = \Theta_{\phi} .x_i = \Theta_{\varphi \circ \psi} .x_i =
  \Theta_{\psi} . \Theta_{\varphi} .x_i$. We get
  \[ \Theta_{\psi} . \Theta_{\varphi} = \tmop{Id} + \sum_{\eta \in
     \bar{H}} \mathbbm{D}_{\eta} + \sum_{\eta \in \bar{H}}
     \mathbbm{E}_{\eta} + \sum_{\eta_{}, \mu \in \bar{H}}
     \mathbbm{E}_{\mu} \mathbbm{D}_{\eta_{}} \]
  Now, $\tmop{Id} .x_i = x_i$, $\mathbbm{D}_{\eta} .x_i = \varphi^i_{\eta}
  x^{\eta} x_i$ (resp. 0) if $\eta \in H_i$ (resp. $\eta \not\in H_i$) and
  $\mathbbm{E}_{\eta} .x_i = \psi^i_{\eta} x^{\eta} x_i$ (resp. 0) if $\eta
  \in H_i$ (resp. $\eta \not\in H_i$) so it remains to compute
  $\mathbbm{E}_{\mu} \mathbbm{D}_{\eta_{}} .x_i$. This is zero as soon as
  $\eta_{} \not\in H_i$ and otherwise :
  \[ \begin{array}{ccc}
       \mathbbm{E}_{\mu} \mathbbm{D}_{\eta_{}} .x_i & = & \mathbbm{E}_{\mu} .
       (\varphi^i_{\eta_{}} x^{\eta_{} + e_i})\\
       & = & \varphi^i_{\eta_{}} \mathbbm{E}_{\mu} .x^{\eta_{} + e_i}\\
       & = & \varphi^i_{\eta_{}}\ds \left( \sum_{s \geqslant 1}
       \sum_{\tmscript{\begin{array}{c}
         1 \leqslant i_1, \ldots, i_s \leqslant \nu\\
         \mu = \eta_1 + \ldots + \eta_s\\
         \eta_k \in H_{i_k}
       \end{array}}} \frac{1}{s!} \psi^{i_1}_{\eta_1} \ldots
       \psi^{i_s}_{\eta_s} P^{\eta + e_i}_{i_1, \ldots, i_s} \right)
       x^{\eta_{} + \mu + e_i}
     \end{array} \]
  Where
  \[ P^{\eta + e_i}_{i_1, \ldots, i_s} = x^{- \eta - e_i + e_{i_1} + \ldots +
     e_{i_s}} \partial_{x_{i_1}} \ldots \partial_{x_{i_s}} x^{\eta_{} + e_i}
     \in \mathbbm{N} \]
  This gives the announced formula.

\end{proof}

We already have in this section the key ingredients to do mould calculus: with the help of these results, we will see that computing a conjugating map
will amount to the computation of a character in a quite simple Hopf algebra
(shuffle, quasishuffle or, after arborification--coarborification, in the
Connes--Kreimer Hopf algebra). But this Hopf algebraic structure is already
present when dealing directly with the coefficients of a diffeomorphism and
gives rise to the Fa{\`a} di Bruno Hopf algebra.

\subsection{From $\tmmathbf{G}$ to the Fa{\`a} di Bruno Hopf algebra.}

\subsubsection{A short reminder on Hopf algebras.}

In the sequel we will deal with graded connected Hopf algebras $\mathcal{H}$.
This means first that $\mathcal{H}$ is a graded vector space over
$\mathbbm{C}$
\[ \mathcal{H}= \bigoplus_{n \geqslant 0} \mathcal{H}_n  \]
where moreover $\mathcal{H}_0 \approx \mathbbm{C}$. In order to get a graded
Hopf algebra, $\mathcal{H}$ has to be a graded algebra with
\begin{enumeratenumeric}
  \item A unit $\eta : \mathbbm{C} \rightarrow \mathcal{H}_0$
  
  \item A product $\pi : \mathcal{H} \otimes \mathcal{H} \rightarrow
  \mathcal{H}$
\end{enumeratenumeric}
with the usual commutative diagrams that respectively express the unit and
associativity properties (see \cite{manchon}) and such that $\pi (\mathcal{H}_n
\circ \mathcal{H}_m) \subseteq \mathcal{H}_{n + m}$. It also has to be a
coalgebra with
\begin{enumeratenumeric}
  \item A counit $\varepsilon : \mathcal{H} \rightarrow \mathbbm{C}$
  
  \item A coproduct $\Delta : \mathcal{H} \rightarrow \mathcal{H} \otimes
  \mathcal{H}$
\end{enumeratenumeric}
with the corresponding commutative diagrams for the counit and coassociativity
properties respectively (\cite{manchon}) and such that $\Delta (\mathcal{H}_n)
\subseteq \oplus_{0 \leqslant k \leqslant n} \mathcal{H}_k \otimes
\mathcal{H}_{n - k}$.

With the compatibility relations between the algebra structure and the
coalgebra structure, $\mathcal{H}$ becomes a bialgebra and the graded
structure (with $\mathcal{H}_0 \approx \mathbbm{C}$) ensures that this is a
Hopf algebra: there exists an antipode, namely a linear map $S : \mathcal{H}
\rightarrow \mathcal{H}$ such that:
\[ \pi \circ (\tmop{id} \otimes S) \circ \Delta = \pi \circ (S \otimes
   \tmop{id}) \circ \Delta = \eta \circ \varepsilon \]
Once such a Hopf algebra is given, it induces an algebra structure on \
$\mathcal{L} (\mathcal{H}, \mathbbm{C})$. If $u$ and $v$ are two linear forms,
their convolution product is given by
\[ u \ast v = \pi_{\mathbbm{C}} \circ (u \otimes v) \circ \Delta \]
where $\pi_{\mathbbm{C}}$ is the usual product on $\mathbbm{C}$. Let us
remember that among such morphisms, one can distinguish
\begin{enumeratenumeric}
  \item The {\tmstrong{characters}} (algebra morphisms) that form a group
  \tmtexttt{$\mathcal{C} (\mathcal{H})$} for the convolution, with unit
  $\varepsilon$ and the inverse of a character $\chi$ is given by $\chi \circ
  S$.
  
  \item The {\tmstrong{infinitesimal characters}}, that are the linear
  morphisms $u$ vanishing on $\mathcal{H}_0$ and such that
  \[ u \circ \pi = \pi_{\mathbbm{C}} \circ (u \otimes \varepsilon +
     \varepsilon \otimes u) \]
  this set is a Lie algebra $c (\mathcal{H})$ for the Lie bracket $[u, v] = u
  \ast v - v \ast u$.
\end{enumeratenumeric}
As for vector fields and diffeomorphisms $\mathcal{C} (\mathcal{H})$ behaves
as the Lie group of the Lie algebra $c (\mathcal{H})$ with the log and exp
maps \ :
\[ \begin{array}{ccc}
     \exp_{\ast} (u) & = & \ds \varepsilon + \sum_{s \geqslant 1} \frac{1}{s!}
     u^{\ast^s}\\
     \log_{\ast} (\chi) & = & \ds \sum_{s \geqslant 1} \frac{(- 1)^{s - 1}}{s}
     (\chi - \varepsilon)^{\ast^s}
   \end{array} \]
It is in fact a proalgebraic group, namely an inverse limit of linear
algebraic groups, and the $\exp$ and $\log$ are computed as graded operators
at the level of the homogeneous components ({\cite{ef-manchon_irma}}).

We shall soon see that vector fields and diffeomorphisms can be identified to
infinitesimal characters and characters on Hopf algebra, namely the Fa{\`a} di
Bruno Hopf algebra. But let us first give a concrete example of graded
connected Hopf algebra related to power series.

The coalgebra of coordinates of power series can be defined as follows, for $n
\in \mathbbm{N}^{\nu}$, let us consider the functional :
\[ \begin{array}{ccccc}
     \alpha_n & : & \mathbbm{C} [[x]] & \rightarrow & \mathbbm{C}\\
     &  & f (x) = \sum_{n \in \mathbbm{N}^{\nu}} f_n x^n & \mapsto & \alpha_n
     (f) = f_n
   \end{array} \]
The graded vector space $C = \oplus_{k \geqslant 0} C_k$, where $C_k =
\tmop{Vect}_{\mathbbm{C}} \{\alpha_n, \hspace{1em} | n| = k\}$ is a graded
cocommutative coalgebra for the coproduct induced by the product of series :
\[ \Delta \alpha_n = \sum_{k + l = n} \alpha_k \otimes \alpha_l \hspace{1em}
   (\alpha_n (f.g) = \pi_{\mathbbm{C}} \circ (\Delta \alpha_n) (f \otimes g))
\]
and the counit is given by $\varepsilon (\alpha_0) = 1$ and $\varepsilon
(\alpha_n) = 0$ if $| n| \geqslant 1$. Thanks to this coalgebra structure, the
space $\mathcal{L} (C, \mathbbm{C})$ is a convolution algebra which is
trivially isomorphic to $\mathbbm{C} [[x]]$. In order to define a Hopf
algebra, let us consider the free commutative algebra generated by
$\{\alpha_n, | n| \geqslant 1\}$. By adding a unit $\tmmathbf{1}$, extending
the gradation and the previous coproduct to the product of functionals, one
gets a Hopf algebra $\mathcal{H}$ whose group of characters can be clearly
identified to the group of invertible series :
\[ G^{\tmop{inv}} = \left\{ 1 + \sum_{|n| \geqslant 1} f_n x^n, \hspace{1em}
   f_n \in \mathbbm{C} \right\} \]
The same idea will govern the contruction of the Fa{\`a} di Bruno Hopf algebra
of coordinates on the group $\tmmathbf{G}$.

\subsubsection{The Fa{\`a} di Bruno Hopf algebra.}

The group $\tmmathbf{G}$ is associated to a graded connected algebra. Let us
first remind that
\[ \tmmathbf{G}= \{\varphi (x) = x + u (x), u \in (\mathbbm{C}_{\geqslant 2}
   [[x]])^{\nu} \} \]
where $x = (x_1, \ldots, x_{\nu})$ and $u (x) = (u_1 (x), \ldots, u_{\nu}
(x))$ and we can note
\[ \varphi (x) = (\varphi_i (x))_{1 \leqslant i \leqslant \nu} = \left( x_i
   \left( 1 + \sum_{\eta \in H_i} \varphi^i_{\eta} x^{\eta} \right) \right)_{1
   \leqslant i \leqslant \nu} \]
A diffeomorphism in $\tmmathbf{G}$ is then given by its coefficients
$\varphi^i_{\eta}$, where $i \in \{1, \ldots, \nu\}$ and $\eta \in H_i$,
and for any such couple $(i, \eta)$, one can define functionals on $\tmmathbf{G}$:
\[ \begin{array}{ccccc}
     C^i_{\eta} & : & \tmmathbf{G} & \rightarrow & \mathbbm{C}\\
     &  & \varphi & \mapsto & \varphi^i_{\eta}
   \end{array} \]
Following the same ideas as for $G^{\tmop{inv}}$, the Fa{\`a} di Bruno algebra
is the free commutative algebra generated by the the functionals $C^i_{\eta}$
:
\[ \mathcal{H}_{\tmop{FdB}} =\mathbbm{C} [(C^{i_{}}_{\eta})_{i \in \{1,
   \ldots, \nu\}, \eta \in H_i}] \]
Identifying $\mathbbm{C} \subset \mathcal{H}_{\tmop{FdB}}$ to $\mathbbm{C}.1$,
where 1 is the functional defined on $\tmmathbf{G}$ by $1 (\varphi)$=1, it is
clear that $\mathcal{H}_{\tmop{FdB}}$ acts on $\tmmathbf{G}$, if $P (\ldots,
C^i_{\eta}, \ldots)$ is a polynomial in $\mathcal{H}_{\tmop{FdB}}$, then
\[ P (\ldots, C^i_{\eta}, \ldots) (\varphi) = P (\ldots, \varphi^i_{\eta},
   \ldots) \]
If we define a gradation by $\tmop{gr} (1) = 0$ and
\[ \tmop{gr} (C^{i_1}_{\eta_1} \ldots C^{i_s}_{\eta_s}) = | \eta_1 | + \ldots
   + | \eta_s | \]
then $\mathcal{H}_{\tmop{FdB}}$ is a graded connected commutative algebra.
Now, using the Fa{\`a} di Bruno formulas \ref{FdBf}, it is not difficult to
define a coproduct on this algebra by the relation :
\[ C^i_{\eta} (\varphi \circ \psi) = \pi_{\mathbbm{C}} \circ (\Delta
   C^i_{\eta}) (\varphi \otimes \psi) \]
extended to $\mathbbm{C} [(C^{i_{}}_{\eta})_{i \in \{1, \ldots, \nu\},
\hspace{1em} \eta \in H_i}]$. With this coproduct, $\mathcal{H}_{\tmop{FdB}}$
is a graded connected Hopf algebra.

Now, any diffeomorphism $\varphi$ can be identified to the character (also
noted $\varphi$) on \ $\mathcal{H}_{\tmop{FdB}}$ defined by $\varphi
(C^i_{\eta}) = C^i_{\eta} (\varphi)$ so that $\tmmathbf{G}$ is clearly
isomorphic to $\mathcal{C} (\mathcal{H}_{\tmop{FdB}})$ and, on the same way,
the Lie algebra $\tmmathbf{g}$ is isomorphic to $\tmmathbf{c}
(\mathcal{H}_{\tmop{FdB}})$ (taking into account the due order reversal in the
formulas) . Note that the $\log - \exp$ correspondance between
{\tmstrong{$\tmmathbf{G}$}} and $\tmmathbf{g}$ is exactly the $\log_{\ast} -
\exp_{\ast}$ correspondance between $\mathcal{C} (\mathcal{H}_{\tmop{FdB}})$
and $\tmmathbf{c} (\mathcal{H}_{\tmop{FdB}})$ and the action of $\tmmathbf{g}$
and $\tmmathbf{G}$ on $\mathbbm{C} [[x]]$ corresponds to the coaction $\Phi :
C \rightarrow C \otimes \mathcal{H}_{\tmop{FdB}}$ defined by
\[ \alpha_n (f \circ \varphi) = \pi_{\mathbbm{C}} \circ \Phi (\alpha_n) (f
   \otimes \varphi) \]
which is such that $C$ is a $\mathcal{H}_{\tmop{FdB}}$--comodule coalgebra (cf
\cite{figuer}).

This algebraic work on diffeomorphisms and vector fields may not seem to help
at the present moment, but it will be useful in the sequel and one can already
notice that {\tmem{linearization equations correspond to equations for
characters on}} $\mathcal{H}_{\tmop{FdB}}$.

\subsection{Characters and conjugacy equations.}

\subsubsection{Vector fields.}

Consider a vector field :
\[ X (x) = \sum_{i = 1}^{\nu} X_i (x) \partial_{x_i} \]
such that
\[ X_i (x) = \lambda_i x_i + x_i \sum_{\eta \in \Eta} a^i_{\eta} x^{\eta} =
   \lambda_i x_i + P_i (x)  \hspace{1em} (\lambda_i \in \mathbbm{C}) \]
this vector field can be seen as a perturbation of it linear part
$X^{\tmop{lin}} = \sum \lambda_i x_i \partial_{x_i}$:
\[ X = X^{\tmop{lin}} + P_{} \]
and one would like to know if, through some formal or analytic change of
coordinates $y = \varphi (x)$ ($x = \psi (y)$), the vector field can be
conjugated to its linear part :
\[ \varphi^{\ast} (X) = X^{\tmop{lin}} \hspace{1em} \tmop{or} \hspace{1em}
   \psi^{\ast} (X^{\tmop{lin}}) = X = X^{\tmop{lin}} + P \]
If this is the case, the latter equation reads, for $1 \leqslant i \leqslant
\nu$,
\[ X^{\tmop{lin}} . \psi_i = X_i \circ \psi = \lambda_i \psi_i + P_i \circ
   \psi (x) \]
so
\[ X^{\tmop{lin}} . \psi_i - \lambda_i \psi_i = P_i \circ \psi (x) \]

On one hand, if $\psi_i (x) = x_i \left( 1 + \sum_{\eta \in \Eta} b^i_{\eta}
x^{\eta} \right)$, then
\[ X^{\tmop{lin}} . \psi_i - \lambda_i \psi_i = x_i \sum_{\eta \in H} \langle
   \lambda, \eta \rangle b^i_{\eta} x^{\eta} \]
where, for $\eta = (n_1, \ldots, n_{\nu}) \in H$, $\langle \lambda, \eta
\rangle = \lambda_1 n_1 + \ldots + \lambda_{\nu} n_{\nu}$. From the Hopf
algebra point of view, let $\nabla$ the derivation on
$\mathcal{H}_{\tmop{FdB}}$ defined by \ $\nabla 1 = 0$ and $\nabla C^i_{\eta}
= \langle \lambda, \eta \rangle C^i_{\eta}$, then, if $\chi$ is the character
associated to $\psi$, we have
\[ X^{\tmop{lin}} . \psi_i - \lambda_i \psi_i = \sum_{\eta \in H} \chi \circ
   \nabla (C^i_{\eta}) x^{\eta} \]

On the other hand, If $u$ is the infinitesimal character on
$\mathcal{H}_{\tmop{FdB}}$ defined by $u (1) = 0$ and $u (C^i_{\eta}) =
a^i_{\eta}$, then the conjugacy equation reads
\[ \forall i, \eta, \hspace{1em} \chi \circ \nabla (C^i_{\eta}) = (u \ast
   \chi) (C^i_{\eta}) \]
and, thanks to the fact that $\nabla$ is a derivation and $u$ infinitesimal,
the conjugacy equation reads, on $\mathcal{H}_{\tmop{FdB}}$,
\[ \chi \circ \nabla = u \ast \chi \]
Of course, $\varphi$ is given by the inverse $\chi \circ S$ of \ $\chi$ in
$\mathcal{C} (\mathcal{H}_{\tmop{FdB}}, \mathbbm{C})$.

In other words, one can associate to a vector field $X = X^{\tmop{lin}} + P$
an infinitesimal character $u$, and this vector field is formally conjugated
to $X^{\tmop{lin}}$ if and only if there exists a character $\chi$ such that
the above equation holds. Moreover we have the very classical (\cite{arnold})
result:

\begin{proposition}
  If, for all $\eta \in H$, $\langle \lambda, \eta \rangle \neq 0$ , $X$ is
  formally conjugated to $X^{\tmop{lin}}$.
\end{proposition}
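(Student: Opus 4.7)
The plan is to produce the character $\chi$ satisfying $\chi\circ\nabla = u\ast\chi$ by a straightforward induction on the grading of $\mathcal{H}_{\tmop{FdB}}$, with the nonvanishing hypothesis on $\langle\lambda,\eta\rangle$ appearing exactly as the divisibility condition that keeps the recursion well-posed. Since $\mathcal{H}_{\tmop{FdB}}$ is free commutative on the generators $C^i_\eta$, it suffices to prescribe the values $\chi(C^i_\eta)$ consistently and then extend multiplicatively, automatically producing an algebra morphism to $\mathbbm{C}$.

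Applying the identity $\chi\circ\nabla=u\ast\chi$ to a generator $C^i_\eta$ and using that $\nabla C^i_\eta=\langle\lambda,\eta\rangle C^i_\eta$, we obtain
\[
\langle\lambda,\eta\rangle\,\chi(C^i_\eta) \;=\; (u\ast\chi)(C^i_\eta) \;=\; \pi_{\mathbbm C}\circ(u\otimes\chi)\circ\Delta C^i_\eta.
\]
The crucial point is the shape of the coproduct coming from the Fa\`a di Bruno formula of Proposition~3: one has
\[
\Delta C^i_\eta \;=\; C^i_\eta\otimes 1 + 1\otimes C^i_\eta + \sum_{(\eta)} C'_{(\eta)}\otimes C''_{(\eta)},
\]
where each $C'_{(\eta)}$ and $C''_{(\eta)}$ is a polynomial in generators $C^j_\mu$ with $|\mu|<|\eta|$ (since both factors carry strictly positive grading summing to $|\eta|$). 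Using that $u$ is an infinitesimal character (so $u(1)=0$) and that $\chi(1)=1$, the right-hand side becomes
\[
u(C^i_\eta) + \sum_{(\eta)} u\bigl(C'_{(\eta)}\bigr)\,\chi\bigl(C''_{(\eta)}\bigr) \;=\; a^i_\eta + R^i_\eta\bigl(\chi|_{<|\eta|}\bigr),
\]
where $R^i_\eta$ depends only on the values of $\chi$ on generators of strictly smaller degree.

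Under the hypothesis $\langle\lambda,\eta\rangle\neq 0$ for all $\eta\in H$, the relation
\[
\chi(C^i_\eta) \;=\; \frac{1}{\langle\lambda,\eta\rangle}\Bigl(a^i_\eta + R^i_\eta\bigl(\chi|_{<|\eta|}\bigr)\Bigr)
\]
determines $\chi(C^i_\eta)$ uniquely by induction on $|\eta|$: the base case $|\eta|=1$ gives $\chi(C^i_\eta)=a^i_\eta/\langle\lambda,\eta\rangle$ since there the generator is primitive, and the inductive step uses only previously constructed values. Extending multiplicatively to all monomials yields a well-defined character $\chi\in\mathcal{C}(\mathcal{H}_{\tmop{FdB}})$.

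It remains to observe that the equality $\chi\circ\nabla = u\ast\chi$, established on each generator $C^i_\eta$, automatically propagates to arbitrary products: both sides are derivations from $\mathcal{H}_{\tmop{FdB}}$ to $\mathbbm{C}$ twisted by the character $\chi$ in the same way, because $\nabla$ is a derivation of $\mathcal{H}_{\tmop{FdB}}$ and $u$ is an infinitesimal character, hence both $\chi\circ\nabla$ and $u\ast\chi-\chi\ast 0$ satisfy the same Leibniz-type identity over $\chi$. The inverse character $\varphi=\chi\circ S$ then delivers the sought formal diffeomorphism conjugating $X$ to $X^{\tmop{lin}}$. The only point that truly requires attention is verifying the inductive shape of $\Delta C^i_\eta$ from the Fa\`a di Bruno formula~\eqref{FdBf}, ensuring that each nontrivial component of the reduced coproduct strictly lowers the grading on both sides.
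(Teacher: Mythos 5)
Your proof is correct and follows essentially the same route as the paper's: solve for $\chi(C^i_\eta)$ by induction on the grading, using that the reduced coproduct strictly lowers the degree on both tensor factors and that $\langle\lambda,\eta\rangle\neq 0$ makes the division legitimate, then extend multiplicatively and check the equation on products via the Leibniz/$\chi$-twisted-derivation property of both $\chi\circ\nabla$ and $u\ast\chi$. The paper carries out that last consistency check by the explicit computation $u\ast\chi(C^{i_1}_{\eta_1}\cdots C^{i_s}_{\eta_s})=\langle\lambda,\eta_1+\dots+\eta_s\rangle\,\chi(C^{i_1}_{\eta_1}\cdots C^{i_s}_{\eta_s})$, which is exactly your abstract argument made concrete.
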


\begin{proof}
  The proof is recursive on the gradation of
  $\mathcal{H}_{\tmop{FdB}}$:
  let $\chi = \ds \sum_{n \geqslant 0} \chi_n$ , where
  $\chi_n$ is the restriction to the $n^{\tmop{th}}$ component of
  $\mathcal{H}_{\tmop{FdB}}$. Note that, necessarily, $\chi_0 = \varepsilon$.
  Let us suppose \ that, for a given $n \geqslant 0$, $\chi_0, \ldots, \chi_n$
  are well-defined and such that, for any monomial $C^{i_1}_{\eta_1} \ldots
  C^{i_s}_{\eta_s}$ with $\tmop{gr} (C^{i_1}_{\eta_1} \ldots
  C^{i_s}_{\eta_s}) = | \eta_1 | + \ldots + | \eta_s | = k \leqslant n$,
  \[ \chi (C^{i_1}_{\eta_1} \ldots C^{i_s}_{\eta_s}) = \chi_k
     (C^{i_1}_{\eta_1} \ldots C^{i_s}_{\eta_s}) = \chi (C^{i_1}_{\eta_1})
     \ldots \chi (C^{i_s}_{\eta_s}) = \chi_{| \eta_1 |} (C^{i_1}_{\eta_1})
     \ldots \chi_{| \eta_s |} (C^{i_s}_{\eta_s}) \]
  Thanks to the definition of $u$, if $C^i_{\eta}$ is in
  $\mathcal{H}_{\tmop{FdB}, n + 1}$ ($| \eta | = n + 1$), then the equation
  reads
  \[ \begin{array}{rcl}
       \langle \lambda, \eta \rangle \chi (C^i_{\eta}) & = & \pi \circ (u
       \otimes \chi) (\Delta (C^i_{\eta}))\\
       & = & u (C^i_{\eta}) \ +\\
       &  &\ds \sum_{s \geqslant 2} \hspace{-3mm} \sum_{\tmscript{\begin{array}{c}
         i_1 = i\\
         1 \leqslant i_2, \ldots, i_s \leqslant \nu\\
         \eta = \eta_1 + \ldots + \eta_s\\
         \eta_k \in H_{i_k}
       \end{array}}} \frac{1}{s - 1!} P^{\eta_1 + e_i}_{i_2, \ldots, i_s}
        u (C^{i_{}}_{\eta_1}) \chi (C^{i_2}_{\eta_2}) \ldots \chi
       (C^{i_s}_{\eta_s})
     \end{array} \]
  Since the right-hand side of this equation is recursively well--defined and $\langle
  \lambda, \eta \rangle$ is nonzero, $\chi (C^i_{\eta})$ is uniquely
  determined. Now, if $C^{i_1}_{\eta_1} \ldots C^{i_s}_{\eta_s}$ is in
  $\mathcal{H}_{\tmop{FdB}, n + 1}$ with $s \geqslant 2$, then, in order to
  get a character, one must have 
  \[\chi (C^{i_1}_{\eta_1} \ldots
  C^{i_s}_{\eta_s}) = \chi (C^{i_1}_{\eta_1}) \ldots \chi (C^{i_s}_{\eta_s}). \]
  But, for $1 \leqslant i \leqslant s$, $| \eta_i | \leqslant n$ and one can
  check that
  \[ \begin{array}{ccc}
       u \ast \chi (C^{i_1}_{\eta_1} \ldots C^{i_s}_{\eta_s}) & = & \ds \sum_{t =
       1}^s \left( \prod_{r \not= t} \chi (C^{i_r}_{\eta_r}) \right) (u \ast
       \chi) (C^{i_t}_{\eta_t})\\
       & = & \ds \sum_{t = 1}^s \left( \prod_{r \not= t} \chi
       (C^{i_r}_{\eta_r}) \right) \langle \lambda, \eta_t \rangle \chi
       (C^{i_t}_{\eta_t})\\
       & = & \ds \langle \lambda, \eta_1 + \ldots + \eta_t \rangle \chi
       (C^{i_1}_{\eta_1} \ldots C^{i_s}_{\eta_s})
     \end{array} \]
  Thus, the equation determines a unique character on
  $\mathcal{H}_{\tmop{FdB}}$. Note, however, that we don't obtain in this
  algebra anything near a closed-form solution. 
\end{proof}

This is the non-resonant case. Note that in the above character equation, one
only needs to be able to compute the values $\chi (C^i_{\eta})$ and assume
that this is a character. Moreover, in $\mathcal{H}_{\tmop{FdB}}$, if we have
geometric estimates on the coefficients $\chi (C^i_{\eta})$ then it is
immediate to conclude on the analyticity of the associated diffeomorphism. But,
in this setting the difficulty lies in the explicit computation of these
coefficients, since the equation $\chi \circ \nabla = u \ast \chi$ involves
the rather complex coproduct of \ $\mathcal{H}_{\tmop{FdB}}$. As we are going
to see next, the same work can be done for diffeomorphisms, with the same
difficulty in the computation.

\subsubsection{Diffeomorphisms.}

Let $l = (l_1, \ldots, l_{\nu}) \in (\mathbbm{C}^{\ast})^{\nu}$ and
$f^{\tmop{lin}}$ defined by $f^{\tmop{lin}} (x_1, \ldots, x_{\nu}) = (l_1 x_1,
\ldots, l_{\nu} x_{\nu})$. For a given analytic diffeomorphism $f$ in
$\tmmathbf{G}$, the diffeomorphism $f^{\tmop{lin}} \circ f$ can be seen as a
perturbation of $f^{\tmop{lin}}$ and one could ask if, at least formally, this
map is conjugate to $f^{\tmop{lin}}$. In other words, does there exist a
diffeomorphism $\varphi \in \tmmathbf{G}$ such that
\[ f^{\tmop{lin}} \circ f \circ \varphi = \varphi \circ f^{\tmop{lin}} \]
or
\[ f \circ \varphi = f^{\tmop{lin}^{- 1}} \circ \varphi \circ f^{\tmop{lin}}
\]
Now if $\xi$ (resp. $\chi$) is the character associated to $f$ (resp.
$\varphi$) then the equation reads
\[ \xi \ast \chi = \chi \circ \sigma \]
where $\sigma (1) = 1$ and $\sigma (C^{i_1}_{\eta_1} \ldots C^{i_s}_{\eta_s})
= l^{\eta_1 + \ldots + \eta_s}$. Once again, it is well-known that if the
diffeomorphism $f^{\tmop{lin}} \circ f$ is non-resonant, i.e.
\[ \forall \eta \in \Eta, \hspace{1em} l^{\eta} - 1 \neq 0 \]
then there exist a unique solution $\chi$ (or $\varphi$). Of course, the proof
follows the same lines as for vector fields.

In both cases, modulo a condition on the linear part, the conjugacy equation
is solvable and can be seen as an equation on characters of
$\mathcal{H}_{\tmop{FdB}}$. Here again, it is {\tmem{in principle}} easy to
obtain the analyticity of a diffeomorphism, with the help of the values of the
character, provided that one can easily compute this character. But, because
of the complexity of the convolution (coproduct in
$\mathcal{H}_{\tmop{FdB}}$), this computation is rather difficult.

The idea of mould calculus amounts to working in much simpler Hopf algebras,
whose coproduct is a deconcatenation coproduct. Then the price to pay is that:
\begin{enumeratenumeric}
  \item One has to make some supplementary condition on the linear part, in
  order that all the coefficients are well defined, namely,
  \[ \forall \eta \in \bar{H}, \hspace{1em} \langle \lambda, \eta \rangle
     \neq 0 \hspace{1em} \tmop{or} \hspace{1em} l^{\eta} - 1 \neq 0 \]
  \item Analyticity becomes hidden.
\end{enumeratenumeric}
We shall see below how arborification cures both plagues.

{\color{black} \section{Mould calculus : a solution to the algebraic
complexity of calculations}\label{s:moulds}

}

One of the most fundamental ideas of mould calculus is to consider
diffeomorphisms of $\tmmathbf{G}$ as series of ``homogeneous'' differential
operators. Focusing on linearization problems, one starts either with
\begin{enumeratenumeric}
  \item one vector fields $X = X^{\tmop{lin}} + P$ where $P$, that belong to
  $\tmmathbf{g}$, can be decomposed in homogeneous components :
  \[ P = \sum_{\eta \in H} \mathbbm{B}_{\eta} \]
  \item one diffeomorphism $f^{\tmop{lin}} \circ f$ with $f$ in
  $\tmmathbf{G}$, whose substitution automorphism can be written :
  \[ \Theta_f = \tmop{Id} + \sum_{\eta \in \bar{H}} \mathbbm{D}_{\eta} \]
\end{enumeratenumeric}
and one has to find a linearization diffeomorphism $\varphi$ whose
substitution automorphism can also decomposed in homogeneous components
\[ \Theta_{\varphi} = \tmop{Id} + \sum_{\eta \in \bar{H}} \mathbbm{F}_{\eta}
\]
it is then natural to try {\tmem{a priori}} to express the components
$\mathbbm{F}_{\eta}$ as (non commutative) polynomials in the original
components delivered by the data of the problem. For example, in the case of
vector fields :
\[ \Theta_{\eta} = \sum_{s \geqslant 1} \sum_{\tmscript{\begin{array}{c}
     \eta_1 + \ldots + \eta_s = \eta\\
     \eta_i \in H
   \end{array}}} M^{\eta_1, \ldots, \eta_s} \mathbbm{B}_{\eta_s} \ldots
   \mathbbm{B}_{\eta_1} \]
In such an expression, convenient properties of symmetry for the coefficients
$M$ will ensure that $\Theta_{\varphi}$ is a substitution automorphism. Doing
so, we have already roughly defined what mould calculus is. Let us focus now
on the Hopf algebras underlying this calculus.

\subsection{Moulds and the concatenation algebra.}

In both types of conjugacy equations, one has to compute an element of
$\tmmathbf{G}$, or, equivalently, a substitution automorphism that can be
decomposed in homogeneous components. But in both case the initial object
already delivers homogeneous components. It seems then reasonable to use them
and their composition to compute the conjugating substitution automorphism.
This suggests to look at the following concatenation algebra : consider $H$ or
$\bar{H}$ as a graded alphabet. Let $\emptyset$ be the empty word. A word will
be noted
\[ \tmmathbf{\eta}= (\eta_1, \ldots, \eta_s) \]
The gradation can be extended to such words (with $| \emptyset | = 0$) and
one can also define the length of a word
\[ l (\tmmathbf{\eta}) = l ((\eta_1, \ldots, \eta_s)) = s \hspace{1em} (l
   (\emptyset) = 0) \]
and its weight
\[ \|\tmmathbf{\eta}\| = \eta_1 + \ldots + \eta_s \in \bar{H} \hspace{1em} (\|
   \emptyset \| = 0) \]
\begin{definition}
  Let $\tmmathbf{H}$ be the set of such words (starting with $H$ or
  $\bar{H}$), then the linear span of $\tmmathbf{H}$, noted $\tmop{Conc}_H$,
  is a graded unital algebra for the concatenation product :
  \[ \forall \tmmathbf{\eta}^1, \tmmathbf{\eta}^2 \in \tmmathbf{H},
     \hspace{1em} \pi (\tmmathbf{\eta}^1 \otimes \tmmathbf{\eta}^2)
     =\tmmathbf{\eta}^1 \tmmathbf{\eta}^2 \]
  where $\tmmathbf{\eta}=\tmmathbf{\eta}^1 \tmmathbf{\eta}^2$ is the usual
  concatenation of the words $\tmmathbf{\eta}^1$ and $\tmmathbf{\eta}^2$.
\end{definition}

Thanks to the gradation, the graded dual of $\tmop{Conc}_H$ is a graded
coalgebra $\tmop{Conc}_H^{\circ}$ whose coproduct is given, on the dual basis
(identified to $\tmmathbf{H}$) by
\[ \Delta (\tmmathbf{\eta}) = \sum_{\tmmathbf{\eta}^1 \tmmathbf{\eta}^2
   =\tmmathbf{\eta}} \tmmathbf{\eta}^1 \otimes \tmmathbf{\eta}^2 \]
and the vector space $\mathcal{L} (\tmop{Conc}_H^{\circ}, \mathbbm{C})$ is an
algebra for the convolution product :
\[ \forall u, v \in \mathcal{L} (\tmop{Conc}_H^{\circ}, \mathbbm{C}),
   \hspace{1em} u \ast v = \pi_{\mathbbm{C}} \circ (u \otimes v) \circ \Delta
\]
In fact, we just defined here the algebra of moulds :

\begin{definition}
  A mould on $H$ (or $\bar{H}$) with values in $\mathbbm{C}$ is a collection
  $M^{\bullet} = \{M^{\tmmathbf{\eta}}, \hspace{1em} \tmmathbf{\eta} \in
  \tmmathbf{H}\}$ of complex numbers.
\end{definition}

It is clear that moulds are in one-to-one correspondance with elements of
$\mathcal{L} (\tmop{Conc}_{\tmmathbf{H}}^{\circ}, \mathbbm{C})$ : since
$\tmmathbf{H}$ is a basis of $\tmop{Conc}_H^{\circ}$, a mould represents the
values of an element of $\mathcal{L} (\tmop{Conc}_H^{\circ}, \mathbbm{C})$ on
the basis $\tmmathbf{H}$ : $M^{\bullet} = \chi (\bullet) =
\{M^{\tmmathbf{\eta}} = \chi (\tmmathbf{\eta}), \hspace{1em} \tmmathbf{\eta}
\in \tmmathbf{H}\}$.

The set of moulds inherits the structure of algebra and for the product, if
$M^{\bullet}$ and $N^{\bullet}$ are two moulds, their product $P^{\bullet} =
M^{\bullet} \times N^{\bullet}$ is given by
\[ P^{\tmmathbf{\eta}} = \sum_{\tmmathbf{\eta}^1 \tmmathbf{\eta}^2
   =\tmmathbf{\eta}} M^{\tmmathbf{\eta}^1} N^{\tmmathbf{\eta}^2} \]
that corresponds to the convolution of the associated morphisms of
$\mathcal{L} (\tmop{Conc}_H^{\circ}, \mathbbm{C})$

\subsection{The underlying Hopf algebras.\label{ss:groups}}

\subsubsection{Vector fields and associated shuffle Hopf algebra.}

As we have seen before, a vector field
\[  X (x) = \sum_{i = 1}^{\nu} X_i (x) \partial_{x_i} \]
such that
\[  X_i (x) = \lambda_i x_i + x_i \sum_{\eta \in \Eta} a^i_{\eta}
      x^{\eta} = \lambda_i x_i + P_i (x)  \hspace{1em} (\lambda_i \in
      \mathbbm{C})  \]
can be decomposed in homogeneous components
\[ X = X^{\tmop{lin}} + \sum_{\eta \in \Eta} \mathbbm{B}_{\eta} \]
with
\[ \mathbbm{B}_{\eta} = \sum_{i = 1}^{\nu} a^i_{\eta} x^{\eta} x_i
   \partial_{x_i} \]
Following Ecalle's convention for the composition of operators, with the help
of these components, one can associate to any word in $\tmmathbf{H}$ a
differential operator in $\mathbbm{C} [x, \partial_x]$ acting on $\mathbbm{C}
[[x]]$ by $\mathbbm{B}_{\emptyset} = \tmop{Id}_{\mathbbm{C} [[x]]}$ and
\[ \forall \tmmathbf{\eta}= (\eta_1, \ldots, \eta_s) \in \tmmathbf{H}/
   \{\emptyset\}, \hspace{1em} \mathbbm{B}_{\tmmathbf{\eta}}
   =\mathbbm{B}_{\eta_s} \ldots \mathbbm{B}_{\eta_1} \]
The family $\mathbbm{B}_{\bullet} = \{\mathbbm{B}_{\tmmathbf{\eta}},
\hspace{1em} \tmmathbf{\eta} \in \tmmathbf{H}\}$ is called a {\tmem{comould}}
and, from a more algebraic point of view, we have the following :

\begin{proposition}
  The map
  \[ \begin{array}{ccccc}
       \rho & : & \tmop{Conc}_H & \rightarrow & \mathbbm{C} [x, \partial_x]\\
       &  & \tmmathbf{\eta} & \mapsto & \mathbbm{B}_{\tmmathbf{\eta}}
     \end{array} \]
  defines an antialgebra morphism (considering the usual composition of
  differential operators)
\end{proposition}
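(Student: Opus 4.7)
The plan is to verify the claim directly from the definition of $\mathbb{B}_{\boldsymbol{\eta}}$, which was set up precisely so that concatenation on the word side corresponds to reverse composition on the operator side.

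First I would check the unit: by definition $\mathbb{B}_{\emptyset}=\tmop{Id}_{\mathbb{C}[[x]]}$, so $\rho$ sends the unit of $\tmop{Conc}_H$ to the unit of $\mathbb{C}[x,\partial_x]$. Next, I would verify multiplicativity up to reversal on basis elements. Given two nonempty words $\boldsymbol{\eta}^1=(\eta^1_1,\ldots,\eta^1_p)$ and $\boldsymbol{\eta}^2=(\eta^2_1,\ldots,\eta^2_q)$, the concatenation product in $\tmop{Conc}_H$ is
\[ \boldsymbol{\eta}^1\boldsymbol{\eta}^2=(\eta^1_1,\ldots,\eta^1_p,\eta^2_1,\ldots,\eta^2_q). \]
Applying $\rho$ using Ecalle's reversal convention gives
\[ \rho(\boldsymbol{\eta}^1\boldsymbol{\eta}^2)=\mathbb{B}_{\eta^2_q}\cdots\mathbb{B}_{\eta^2_1}\mathbb{B}_{\eta^1_p}\cdots\mathbb{B}_{\eta^1_1}=\bigl(\mathbb{B}_{\eta^2_q}\cdots\mathbb{B}_{\eta^2_1}\bigr)\circ\bigl(\mathbb{B}_{\eta^1_p}\cdots\mathbb{B}_{\eta^1_1}\bigr)=\rho(\boldsymbol{\eta}^2)\circ\rho(\boldsymbol{\eta}^1), \]
which is exactly the antimorphism property $\rho(ab)=\rho(b)\rho(a)$. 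The edge cases where one of the words is empty follow from $\mathbb{B}_{\emptyset}=\tmop{Id}$ and are immediate.

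Finally, I would extend $\rho$ by linearity to all of $\tmop{Conc}_H$. Since the concatenation product and composition of operators are both bilinear, the antimultiplicative identity established on the basis $\tmmathbf{H}$ propagates to the whole space.

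There is no serious obstacle here: the entire content is bookkeeping, and the only substantive point is the reversal of order of the $\mathbb{B}_{\eta_k}$'s in the defining formula $\mathbb{B}_{\boldsymbol{\eta}}=\mathbb{B}_{\eta_s}\cdots\mathbb{B}_{\eta_1}$, which is what forces $\rho$ to be an \emph{anti}algebra morphism rather than an algebra morphism. One must of course note that the infinite sums involved in the definition of each $\mathbb{B}_\eta$ act on $\mathbb{C}[[x]]$ in a well-defined way thanks to the Krull topology (already used implicitly when writing $X\in\tmmathbf{g}$ as $\sum_{\eta\in H}\mathbb{B}_\eta$), so no convergence issue interferes with the identity of operators obtained.
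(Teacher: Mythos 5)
Your proof is correct and follows exactly the argument the paper leaves implicit (the proposition is stated without proof, being regarded as immediate from the reversal convention $\mathbbm{B}_{\tmmathbf{\eta}}=\mathbbm{B}_{\eta_s}\cdots\mathbbm{B}_{\eta_1}$): the unit check, the reversal computation on basis words, and the bilinear extension are precisely the required steps. The closing remark about convergence is harmless but unnecessary, since each $\mathbbm{B}_{\eta}$ is a finite sum $\sum_i b^i_{\eta}x^{\eta}x_i\partial_{x_i}$ and hence already an element of $\mathbbm{C}[x,\partial_x]$.
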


Note also that the action of $\mathbbm{C} [x, \partial_x]$ on a product in
$\mathbbm{C} [[x]]$ defines a coproduct $\Delta : \mathbbm{C} [x, \partial_x]
\rightarrow \mathbbm{C} [x, \partial_x] \otimes \mathbbm{C} [x, \partial_x]$
defined by
\[ \forall u, v \in \mathbbm{C} [[x]], \hspace{1em} \forall D \in \mathbbm{C}
   [x, \partial_x], \hspace{1em} D. (u v) = \pi_{\mathbbm{C} [[x]]} \circ
   \Delta (D) . (u \otimes v) \]
and, as we deal here with vector fields,
\[ \forall \eta \in \Eta, \hspace{1em} \Delta (\mathbbm{B}_{\eta}) =
   \tmop{Id}_{\mathbbm{C} [[x]]} \otimes \mathbbm{B}_{\eta}
   +\mathbbm{B}_{\eta} \otimes \tmop{Id}_{\mathbbm{C} [[x]]} \]
This can be extended to the comould and, using
\begin{enumeratenumeric}
  \item The morphism $\rho$,
  
  \item The operators $L^{\eta}_+$ on $\tmmathbf{H}$, defined by
  \[ L^{\eta}_+ ((\eta_1, \ldots, \eta_s)) = (\eta, \eta_1, \ldots, \eta_s)
     \hspace{1em} (L^{\eta}_+ (\emptyset) = (\eta)) \]
  \[  \]
  and extended by linearity to Conc\tmrsub{H} 
\end{enumeratenumeric}
we get

\begin{theorem}
  With the coproduct defined on the basis $\tmmathbf{H}$ of Conc\tmrsub{H} by
  $\Delta (\emptyset) = \emptyset \otimes \emptyset$ and
  \[ \forall \eta \in \Eta, \hspace{1em} \forall \tmmathbf{\eta} \in
     \tmmathbf{H}, \hspace{1em} \Delta (L_+^{\eta} (\tmmathbf{\eta})) =
     (\tmop{Id} \otimes L_+^{\eta} + L_+^{\eta} \otimes \tmop{Id}) \circ
     \Delta (\tmmathbf{\eta}) \]
  the algebra Conc\tmrsub{H} becomes a graded, cocommutative bialgebra, and
  thus a Hopf algebra whose antipode is given by
  \[ \hspace{1em} \forall \tmmathbf{\eta} \in \tmmathbf{H}, \hspace{1em} S
     (\tmmathbf{\eta}) = (- 1)^{l (\tmmathbf{\eta})} \tmop{rev}
     (\tmmathbf{\eta}) \]
  where $\tmop{rev} (\emptyset) = \emptyset$ and $\tmop{rev} ((\eta_1, \ldots,
  \eta_s)) = (\eta_s, \ldots, \eta_1)$ otherwise. Moreover the morphism $\rho$
  turns to be a coalgebra morphism and, in this case, the comould
  $\mathbbm{B}_{\bullet}$ is said to be cosymmetral..
\end{theorem}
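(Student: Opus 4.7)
My plan would be to verify all the claims by first identifying an explicit closed form for the recursively-defined coproduct, and then checking the bialgebra/Hopf axioms and the coalgebra morphism property on generators, propagating to all words by induction on length.

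First, I would argue that the recursion $\Delta(L_+^\eta(\boldsymbol{\eta})) = (\mathrm{Id}\otimes L_+^\eta + L_+^\eta\otimes\mathrm{Id})\circ\Delta(\boldsymbol{\eta})$ together with $\Delta(\emptyset)=\emptyset\otimes\emptyset$ uniquely determines $\Delta$ on $\tmop{Conc}_H$, since every nonempty word is of the form $L_+^{\eta_1}(\eta_2,\ldots,\eta_s)$. A straightforward induction on length then identifies $\Delta$ with the deshuffle coproduct
\[
\Delta(\eta_1,\ldots,\eta_s) \;=\; \sum_{I\sqcup J=\{1,\ldots,s\}} \boldsymbol{\eta}_I \otimes \boldsymbol{\eta}_J,
\]
where $\boldsymbol{\eta}_I$ denotes the subword indexed by $I$ in the natural order. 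In particular, every letter $(\eta)$ is primitive. From this explicit form, counit (with $\varepsilon(\boldsymbol{\eta})=\delta_{\boldsymbol{\eta},\emptyset}$), coassociativity, and cocommutativity are immediate, since the formula is manifestly symmetric in the two tensor factors and the iterated coproducts both yield the sum over ordered triples of disjoint subsets exhausting $\{1,\ldots,s\}$.

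Next I would verify the bialgebra axiom, i.e.\ that $\Delta$ is an algebra morphism for concatenation. This follows cleanly from the closed form: splitting $\{1,\ldots,s+s'\}$ into $I\sqcup J$ is the same data as splitting $\{1,\ldots,s\}=I_1\sqcup J_1$ and $\{s+1,\ldots,s+s'\}=I_2\sqcup J_2$, which gives $\Delta(\boldsymbol{\eta}\cdot\boldsymbol{\eta}')=\Delta(\boldsymbol{\eta})\cdot\Delta(\boldsymbol{\eta}')$ after regrouping. Since $\tmop{Conc}_H$ is a graded connected bialgebra, the antipode exists automatically and is unique. To confirm the explicit formula $S(\boldsymbol{\eta})=(-1)^{l(\boldsymbol{\eta})}\tmop{rev}(\boldsymbol{\eta})$ I would proceed by induction on $l(\boldsymbol{\eta})$, plugging this candidate into the defining convolution equation $\pi\circ(S\otimes\mathrm{Id})\circ\Delta = \eta\circ\varepsilon$: the sum over splittings $I\sqcup J$ collapses because the map $I\mapsto J$ pairs terms with opposite signs except for the single boundary term, exactly as for the classical shuffle Hopf algebra.

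For the coalgebra morphism property of $\rho$, I would first note that in $\mathbb{C}[x,\partial_x]$ the coproduct defined via the Leibniz-type relation $D.(uv)=\pi\circ\Delta(D).(u\otimes v)$ is multiplicative for composition (a one-line check: apply $D_1D_2$ to $uv$ using $\Delta(D_2)$ first, then $\Delta(D_1)$), so $(\mathbb{C}[x,\partial_x],\circ,\Delta)$ is a bialgebra in which each $\mathbb{B}_\eta$ is primitive because it is a derivation. Since $\rho$ is an antialgebra morphism sending letters to primitives, an induction on length mirroring the recursion of $\Delta$ on $\tmop{Conc}_H$ gives $\Delta\circ\rho=(\rho\otimes\rho)\circ\Delta$: writing $\boldsymbol{\eta}=L_+^{\eta_1}(\boldsymbol{\eta}'')$, so $\rho(\boldsymbol{\eta})=\rho(\boldsymbol{\eta}'')\circ\mathbb{B}_{\eta_1}$, one expands both sides using multiplicativity of $\Delta$ on $\mathbb{C}[x,\partial_x]$ and the induction hypothesis, and the two presentations coincide termwise. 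The term ``cosymmetral'' is then just the name attached to this coalgebra-morphism property. The only genuinely subtle point in the proof is keeping track of the order reversal introduced by the convention $\mathbb{B}_{\boldsymbol{\eta}}=\mathbb{B}_{\eta_s}\cdots\mathbb{B}_{\eta_1}$, which is why the induction is done with respect to prepending via $L_+^\eta$ rather than appending; everything else reduces to combinatorics of subset splittings.
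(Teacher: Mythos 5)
Your proposal is correct and supplies exactly the details the paper leaves out (the paper's entire proof is ``The proof is straightforward''): the closed deshuffle form of $\Delta$, the subset-splitting verification of the bialgebra axiom, and the primitivity of the derivations $\mathbbm{B}_{\eta}$ driving the induction for $\Delta\circ\rho=(\rho\otimes\rho)\circ\Delta$ are all the intended route. One small imprecision: in the antipode check the cancelling pairing is not literally $I\mapsto J$ (which would not match signs or words) but the sign-reversing involution that moves the minimal index $1$ between $I$ and $J$, leaving the concatenated word $\tmop{rev}(\tmmathbf{\eta}_I)\,\tmmathbf{\eta}_J$ unchanged while flipping $(-1)^{|I|}$, so that for every nonempty word \emph{all} terms cancel and the convolution equals $\eta\circ\varepsilon$.
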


The proof is straightforward.

Going back to the given vector field $X$ and the conjugating equation
$\varphi^{\ast} (X) = X^{\tmop{lin}}$ becomes, in terms of substitution
automorphism,
\[ X. \Theta_{\varphi} = \Theta_{\varphi} .X^{\tmop{lin}} \]
And, since the vector field delivers a family (comould) of differential
operators, it seems reasonable to look for a substitution automorphism
$\Theta_{\varphi}$ that can be written as a mould expansion
\[ \Theta_{\varphi} = \sum_{\tmmathbf{\eta} \in \tmmathbf{H}}
   M^{\tmmathbf{\eta}} \mathbbm{B}_{\tmmathbf{\eta}} \]
where $M^{\bullet} = \{M^{\tmmathbf{\eta}}, \hspace{1em} \tmmathbf{\eta} \in
\tmmathbf{H}\}$ is precisely a mould, with the relevant conditions
(symmetrality) that ensure that $\Theta_{\varphi}$ is a substitution
automorphism. More precisely, since Conc\tmrsub{H} is a graded cocommutative
Hopf algebra, its graded dual is a graded commutative Hopf algebra, noted
$\tmop{Sh}_H$ (for shuffle Hopf algebra on $H$) whose product (resp.
coproduct) is given by the usual shuffle product (resp. deconcatenation
coproduct). But if we consider the group of characters $\mathcal{C}
(\tmop{Sh}_{\Eta}, \mathbbm{C})$ then

\begin{theorem}
  The map
  \[ \begin{array}{ccccc}
       S_{\rho} & : & \mathcal{C} (\tmop{Sh}_{\Eta}, \mathbbm{C}) &
       \rightarrow & \tmmathbf{G}\\
       &  & \chi & \mapsto & \tmop{ev} \left( \sum_{\tmmathbf{\eta} \in
       \tmmathbf{H}} \chi (\tmmathbf{\eta}) \rho (\tmmathbf{\eta}) \right)
     \end{array} \label{th:mouldnorm} \]
  defines a morphism of groups and $\Theta^{\chi} = \sum_{\tmmathbf{\eta} \in
  \tmmathbf{H}} \chi (\tmmathbf{\eta}) \rho (\tmmathbf{\eta})$ is the
  substitution automorphism associated to $S_{\rho} (\chi)$.
\end{theorem}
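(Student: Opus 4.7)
The plan is to exploit duality and then reduce everything to the group-like element associated with a character.

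First, since $\tmop{Conc}_H$ is graded connected and cocommutative, any character $\chi$ of the graded dual $\tmop{Sh}_H$ can be identified with the formal series $g_\chi = \sum_{\tmmathbf{\eta} \in \tmmathbf{H}} \chi(\tmmathbf{\eta}) \tmmathbf{\eta}$ in the completion $\widehat{\tmop{Conc}}_H$. The character identity $\chi(u \shuffle v) = \chi(u) \chi(v)$ is, by graded duality, equivalent to the grouplike property $\Delta(g_\chi) = g_\chi \otimes g_\chi$; normalization gives $\chi(\emptyset) = 1$.

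Next, $\rho$ is a coalgebra morphism, so $\Theta^\chi = \rho(g_\chi) = \sum_{\tmmathbf{\eta}} \chi(\tmmathbf{\eta}) \mathbbm{B}_{\tmmathbf{\eta}}$ is grouplike in the completion of $\mathbbm{C}[x,\partial_x]$ equipped with the coproduct $\Delta$ defined by the Leibniz-type rule $D.(uv) = \pi_{\mathbbm{C}[[x]]} \circ \Delta(D).(u \otimes v)$. The grouplike identity $\Delta(\Theta^\chi) = \Theta^\chi \otimes \Theta^\chi$ therefore translates into the multiplicativity $\Theta^\chi.(fg) = (\Theta^\chi.f)(\Theta^\chi.g)$. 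Convergence in the Krull topology is automatic: each $\mathbbm{B}_{\tmmathbf{\eta}}$ raises degree by $\|\tmmathbf{\eta}\| \in \bar{H}$, so only finitely many words contribute to any fixed homogeneous component of the output. Moreover $\mathbbm{B}_\emptyset = \tmop{Id}$ and every nonempty word has positive weight, hence $\Theta^\chi . x_i = x_i + \text{h.o.t.}$, so $\Theta^\chi . x \in \tmmathbf{G}$. Thus $\Theta^\chi \in \tilde{\tmmathbf{G}}$, which immediately gives the second assertion: $\Theta^\chi$ is the substitution automorphism of $S_\rho(\chi) = \tmop{ev}(\Theta^\chi)$.

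Finally, I verify the group morphism property. On the dual side, the convolution product $\chi_1 \ast \chi_2$ in $\mathcal{C}(\tmop{Sh}_H, \mathbbm{C})$ corresponds to the concatenation product of grouplike series: $g_{\chi_1 \ast \chi_2} = g_{\chi_1} \cdot g_{\chi_2}$ in $\widehat{\tmop{Conc}}_H$. Since $\rho$ is an \emph{anti}-algebra morphism, applying $\rho$ yields
\[
\Theta^{\chi_1 \ast \chi_2} = \rho(g_{\chi_1} g_{\chi_2}) = \rho(g_{\chi_2}) \circ \rho(g_{\chi_1}) = \Theta^{\chi_2} \circ \Theta^{\chi_1}.
\]
Combining this with the anti-isomorphism property $\tmop{ev}(\Theta_1 \Theta_2) = \tmop{ev}(\Theta_1) \circ \tmop{ev}(\Theta_2)$ established in Proposition~1,
\[
S_\rho(\chi_1 \ast \chi_2) = \tmop{ev}(\Theta^{\chi_2} \Theta^{\chi_1}) = \tmop{ev}(\Theta^{\chi_1}) \circ \tmop{ev}(\Theta^{\chi_2}) = S_\rho(\chi_1) \circ S_\rho(\chi_2),
\]
where $\circ$ is the group law of $\tmmathbf{G}$.

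The main subtlety is the passage between a character of $\tmop{Sh}_H$ and the corresponding grouplike series in the completed concatenation algebra; the grading of $\tmop{Conc}_H$ (and the positivity of weights in $H$) is what makes the formal sum $\rho(g_\chi)$ a well-defined, continuous operator, so that the abstract duality translates into a genuine statement about substitution automorphisms.
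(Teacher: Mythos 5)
Your proof is correct and is essentially the paper's argument in different clothing: the character--grouplike dictionary and the preservation of grouplikes under the coalgebra morphism $\rho$ are exactly what the paper's explicit chain of equalities (culminating in $\chi(\pi_{\tmop{Sh}_{\Eta}}(\tmmathbf{\eta}^1\otimes\tmmathbf{\eta}^2))=\chi(\tmmathbf{\eta}^1)\chi(\tmmathbf{\eta}^2)$) unpacks, and the group--morphism step via $\Theta^{\chi_1\ast\chi_2}=\Theta^{\chi_2}\Theta^{\chi_1}$ together with the anti--isomorphism $\tmop{ev}$ is identical. One small slip: Proposition~1 gives $\tmop{ev}(\Theta_1\Theta_2)=\tmop{ev}(\Theta_2)\circ\tmop{ev}(\Theta_1)$, the reversed order from the identity you state, although the way you actually apply it in the displayed computation is the correct one.
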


Note that moulds corresponding to such characters are called
{\tmem{symmetral}} moulds.

\begin{proof}
  Thanks to gradation and homogeneity, it is clear that this defines a
  diffeomorphism of $\tmmathbf{G}$. If $\chi$ is a character, then for two
  power series $u$ \ and $v$,
  \[ \begin{array}{cccc}
       \Theta^{\chi} . (u v) & = & \ds \sum_{\tmmathbf{\eta} \in \tmmathbf{H}}
       \chi (\tmmathbf{\eta}) \rho (\tmmathbf{\eta}) . (u v) & \\
       & = &\ds  \sum_{\tmmathbf{\eta} \in \tmmathbf{H}} \chi (\tmmathbf{\eta})
       \pi_{\mathbbm{C}[ [x]} \circ (\Delta (\rho (\tmmathbf{\eta})) . (u
       \otimes v) & \\
       & = &\ds  \pi_{\mathbbm{C}[ [x]} \circ \left( (\rho \otimes \rho) \left(
       \sum_{\tmmathbf{\eta} \in \tmmathbf{H}} \chi (\tmmathbf{\eta}) \Delta
       (\tmmathbf{\eta}) \right) \right) . (u \otimes v) & \\
       & = & \ds 
       \sum_{\tmmathbf{\eta}, \tmmathbf{\eta}^1, \tmmathbf{\eta}^2 \in
       \tmmathbf{H}} \chi (\tmmathbf{\eta}) \langle \Delta (\tmmathbf{\eta}),
       \tmmathbf{\eta}^1 \otimes \tmmathbf{\eta}^2 \rangle (\rho(\tmmathbf{\eta}^1).u)
      (\rho(\tmmathbf{\eta}^2)).v)  & \\
       & = & \ds 
       \sum_{\tmmathbf{\eta}, \tmmathbf{\eta}^1, \tmmathbf{\eta}^2 \in
       \tmmathbf{H}} \chi (\tmmathbf{\eta}) \langle \tmmathbf{\eta},
       \pi_{\tmop{Sh}_{\Eta}} (\tmmathbf{\eta}^1 \otimes \tmmathbf{\eta}^2)
       \rangle (\rho(\tmmathbf{\eta}^1).u)  (\rho(\tmmathbf{\eta}^2).v)  & \\
       & = & \ds 
       \sum_{\tmmathbf{\eta}^1, \tmmathbf{\eta}^2 \in \tmmathbf{H}} \chi
       (\pi_{\tmop{Sh}_{\Eta}} (\tmmathbf{\eta}^1 \otimes \tmmathbf{\eta}^2)) (\rho(\tmmathbf{\eta}^1).u)  (\rho(\tmmathbf{\eta}^2).v)
        & \\
       & = & \ds 
       \sum_{\tmmathbf{\eta}^1, \tmmathbf{\eta}^2 \in \tmmathbf{H}} (\chi
       (\tmmathbf{\eta}^1)(\rho(\tmmathbf{\eta}^1).u) \chi
       (\tmmathbf{\eta}^2)(\rho(\tmmathbf{\eta}^2).v)&
       \\
       & = & \ds (\Theta^{\chi} .u) (\Theta^{\chi} v) & 
     \end{array} \]
  thus $\Theta^{\chi}$ is a substitution automorphism. Moreover, if $\chi^1$
  and $\chi^2$ are two characters, then
  \[ \Theta^{\chi^1 \ast \chi^2} = \Theta^{\chi^2} . \Theta^{\chi^1} =
     \Theta_{S_{\rho} (\chi^2)} . \Theta_{S_{\rho} (\chi^1)} =
     \Theta_{S_{\rho} (\chi^1) \circ S_{\rho} (\chi^2)} \]
  thus
  \[ S_{\rho} (\chi^1 \ast \chi^2) = S_{\rho} (\chi^1) \circ S_{\rho} (\chi^2)
  \]
  
\end{proof}

Note that $S_{\rho} (\mathcal{C}(\tmop{Sh}_{\Eta}, \mathbbm{C}))$ maybe only
be \ a subgroup of {\tmstrong{$\tmmathbf{G}$}} but, in linearization
equations, it is reasonable to look for the change of coordinate in this
subgroup. Suppose that $\varphi^{\ast} (X) = X^{\tmop{lin}}$ ($X.
\Theta_{\varphi} = \Theta_{\varphi} .X^{\tmop{lin}}$) where $\varphi =
S_{\rho} (\chi)$. If $u$ is the infinitesimal character on $\tmop{Sh}_{\Eta}$
defined by
\[ u (\tmmathbf{\eta}) = \left\{ \begin{array}{lll}
     1 & \tmop{if} & l (\tmmathbf{\eta}) = 1\\
     0 & \tmop{if} & l (\tmmathbf{\eta}) \not= 1
   \end{array} \right. \]
then,
\[ X = X^{\tmop{lin}} + \sum_{\tmmathbf{\eta} \in \tmmathbf{H}} u
   (\tmmathbf{\eta}) \rho (\tmmathbf{\eta}) \]
and
\[ X. \Theta_{\varphi} = X^{\tmop{lin}} . \Theta_{\varphi} +
   \sum_{\tmmathbf{\eta} \in \tmmathbf{H}} \chi \ast u (\tmmathbf{\eta}) \rho
   (\tmmathbf{\eta}) = \Theta_{\varphi} .X^{\tmop{lin}} \]
Since
\[ [X^{\tmop{lin}}, \mathbbm{B}_{\eta}] = \langle \lambda, \eta \rangle
   \mathbbm{B}_{\eta} \]
we have
\[ X^{\tmop{lin}} . \Theta_{\varphi} - \Theta_{\varphi} .X^{\tmop{lin}} =
   \sum_{\tmmathbf{\eta} \in \tmmathbf{H}} (\nabla \chi) (\tmmathbf{\eta})
   \rho (\tmmathbf{\eta}) \]
where
\[ (\nabla \chi) (\tmmathbf{\eta}) = \langle \lambda, \|\tmmathbf{\eta}\|
   \rangle \chi (\tmmathbf{\eta}) \]
so that the conjugacy equation can be turned into a character equation
\[ \nabla \chi + \chi \ast u = 0 \]
For the inverse character $\xi$, corresponding to the inverse of the
diffeomorphism, we get,
\[ \nabla \xi = u \ast \xi \]
\begin{proposition}
  Under the assumption that for any $\eta$ in $\bar{H}$, $\langle \lambda,
  \eta \rangle \not= 0$, the above equation determines a unique symmetral
  mould (character) whose values are
  \[ \xi (\eta_1, \ldots, \eta_s) = \frac{1}{\langle \lambda, \eta_1 + \ldots
     + \eta_s \rangle \langle \lambda, \eta_2 + \ldots + \eta_s \rangle \ldots
     \langle \lambda, \eta_s \rangle} \]
  and its inverse is given by
  \[ \chi (\eta_1, \ldots, \eta_s) = \frac{(- 1)^s}{\langle \lambda, \eta_1
     \rangle \langle \lambda, \eta_1 + \eta_2 \rangle \ldots \langle \lambda,
     \eta_1 + \ldots + \eta_s \rangle} \]
\end{proposition}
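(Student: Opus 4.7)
The plan is to unfold the character equation $\nabla \xi = u \ast \xi$ word by word, using the deconcatenation coproduct of $\tmop{Sh}_{\Eta}$ and the fact that $u$ is supported on letters.

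Evaluating both sides on a word $\tmmathbf{\eta}=(\eta_1,\ldots,\eta_s)$, the left-hand side reads $\langle \lambda,\eta_1+\cdots+\eta_s\rangle\,\xi(\eta_1,\ldots,\eta_s)$ by definition of $\nabla$. For the right-hand side, deconcatenation gives
\[ (u \ast \xi)(\eta_1,\ldots,\eta_s) = \sum_{k=0}^{s} u(\eta_1,\ldots,\eta_k)\,\xi(\eta_{k+1},\ldots,\eta_s), \]
and since $u$ vanishes on the empty word and on words of length $\geqslant 2$, only the $k=1$ term survives, yielding $\xi(\eta_2,\ldots,\eta_s)$. The character equation therefore reduces to the one-step recursion
\[ \langle \lambda, \eta_1+\cdots+\eta_s \rangle\, \xi(\eta_1,\ldots,\eta_s) = \xi(\eta_2,\ldots,\eta_s), \qquad \xi(\emptyset)=1. \]

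Under the nonresonance assumption, every partial sum $\eta_k+\cdots+\eta_s$ lies in $\bar{H}$ and pairs nontrivially with $\lambda$, so every occurring denominator is nonzero and the recursion determines $\xi$ uniquely on each word; a straightforward induction on $s$ yields the announced product formula. That the resulting linear form is genuinely a character of $\tmop{Sh}_{\Eta}$, i.e.\ that the underlying mould is symmetral, is automatic: $\nabla$ is a derivation of the convolution product (since $\|\cdot\|$ is additive under concatenation, $\nabla$ is compatible with the deconcatenation coproduct), so the equation $\nabla \xi = u \ast \xi$ with $u$ infinitesimal admits a unique solution in the group $\mathcal{C}(\tmop{Sh}_{\Eta},\mathbbm{C})$ normalized by $\xi(\emptyset)=1$, and the values forced by the recursion must coincide with it.

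For the inverse character $\chi = \xi \circ S$, I would invoke the antipode formula $S(\eta_1,\ldots,\eta_s)=(-1)^s(\eta_s,\ldots,\eta_1)$ from the preceding theorem. This gives
\[ \chi(\eta_1,\ldots,\eta_s) = (-1)^s\,\xi(\eta_s,\ldots,\eta_1), \]
and substituting the formula for $\xi$ on the reversed word produces exactly the displayed expression, now with the partial sums running from the left: $\eta_1,\ \eta_1+\eta_2,\ \ldots,\ \eta_1+\cdots+\eta_s$. No real obstacle arises in the argument: the deconcatenation coproduct trivialises the convolution with $u$, and the nonresonance hypothesis is precisely what makes the recursion invertible; the only care required is to assemble the denominators in the correct order and to track the sign $(-1)^s$ produced by the antipode when passing from $\xi$ to $\chi$.
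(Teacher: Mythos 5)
Your proof is correct and takes essentially the same route as the paper: the paper likewise reduces $\nabla\xi = u\ast\xi$ to the one-step recursion $\langle\lambda,\eta+\|\tmmathbf{\eta}\|\rangle\,\xi(L_+^{\eta}(\tmmathbf{\eta}))=\xi(\tmmathbf{\eta})$ (your word-by-word deconcatenation argument, phrased via $L_+^{\eta}$), solves it to get the product formula, and obtains $\chi$ from $\xi\circ S$ with $S(\eta_1,\ldots,\eta_s)=(-1)^s(\eta_s,\ldots,\eta_1)$. The only point where you are as terse as the paper is the verification that the solution of the recursion is genuinely multiplicative for the shuffle product (the paper says ``one can check''; your appeal to uniqueness in the character group tacitly assumes existence of a character solution, which is the part that would need the degree-by-degree check as in the Fa\`a di Bruno case).
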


The proof is straightforward. For example, $\xi (\emptyset) = 1$ and, for
$\eta \in H$ and $\tmmathbf{\eta} \in \tmmathbf{H}$, the equation reads
\[ \langle \lambda, \eta +\|\tmmathbf{\eta}\| \rangle \xi (L_+^{\eta}
   (\tmmathbf{\eta})) = (u \ast \xi) (L_+^{\eta} (\tmmathbf{\eta}))) = u
   (\eta) \xi (\tmmathbf{\eta}) = \xi (\tmmathbf{\eta}) \]
One can check that this is a character (symmetral mould) and $\chi$ can be
either computed directly or as the inverse of the character $\xi$. In the
latter case, since $\xi$ is a character,
\[ \chi = \xi^{\ast^{- 1}} = \xi \circ S \]
where the antipode $S$ in $\tmop{Sh}_H$ is given by
\[ S (\eta_1, \ldots, \eta_s) = (- 1)^s (\eta_s, \ldots, \eta_1) \]

Now the same can be done for the linearization of diffeomorphisms.

\subsubsection{Diffeomorphisms and the associated quasishuffle Hopf algebra.}

Once again, let $l = (l_1, \ldots, l_{\nu}) \in (\mathbbm{C}^{\ast})^{\nu}$
and $f^{\tmop{lin}}$ defined by $f^{\tmop{lin}} (x_1, \ldots, x_{\nu}) = (l_1
x_1, \ldots, l_{\nu} x_{\nu})$. For a given analytic diffeomorphism $f$ in
$\tmmathbf{G}$, the diffeomorphism $f^{\tmop{lin}} \circ f$ can be seen as a
perturbation of $f^{\tmop{lin}}$ and one could ask if, at least formally, this
map is conjugated to $f^{\tmop{lin}}$. In other words, does there exist a
diffeomorphism $\varphi \in \tmmathbf{G}$ or $\varphi \in
\tmmathbf{G}_{\tmop{ana}}$ such that
\[ f^{\tmop{lin}} \circ f \circ \varphi = \varphi \circ f^{\tmop{lin}} \]
If we define on $\mathbbm{C} [[x]]$ the operator $F^{\tmop{lin}}$ by
$F^{\tmop{lin}} .u = u \circ f^{\tmop{lin}}$, then the equation becomes
\[ F_{\varphi} .F_f .F^{\tmop{lin}} = F^{\tmop{lin}} .F_{\varphi} \]
As for vector fields, the substitution automorphims $F_f$ is a series of
homogeneous differential operators :
\[ F_f = \tmop{Id}_{\mathbbm{C} [[x]]} + \sum_{\eta \in \bar{H}}
   \mathbbm{D}_{\eta} \]
and, as in the previous section, the map
\[ \begin{array}{ccccc}
     \rho & : & \tmop{Conc}_{\bar{H}} & \rightarrow & \mathbbm{C} [x,
     \partial_x]\\
     &  & \tmmathbf{\eta}= (\eta_1, \ldots, \eta_s) & \mapsto &
     \mathbbm{D}_{\tmmathbf{\eta}} =\mathbbm{D}_{\eta_s} \ldots
     \mathbbm{D}_{\eta_1}
   \end{array} \]
defines an anti algebra morphism (with $\rho (\emptyset)
=\mathbbm{D}_{\emptyset} = \tmop{Id}_{\mathbbm{C} [[x]]}$). Now the main
difference with vector fields is that the definition of $\rho$ is based on
homogeneous components of a substitution automorphism, for which we have :
\[ \forall \eta \in \bar{H}, \hspace{1em} \Delta (\mathbbm{D}_{\eta}) =
   \tmop{Id}_{\mathbbm{C} [[x]]} \otimes \mathbbm{D}_{\eta} + \sum_{\eta_1 +
   \eta_2 = \eta} \mathbbm{D}_{\eta_1} \otimes \mathbbm{D}_{\eta_2}
   +\mathbbm{D}_{\eta} \otimes \tmop{Id}_{\mathbbm{C} [[x]]} \]
But if we define $\Delta (\emptyset) = \emptyset \otimes \emptyset$ and, for
$\eta \in \bar{H}$,
\[ \Delta ((\eta)) = \emptyset \otimes (\eta) + \sum_{\eta_1 + \eta_2 = \eta}
   (\eta_1) \otimes (\eta_2) + (\eta) \otimes \emptyset \]
then, extending this coproduct to $\tmop{Conc}_{\bar{H}}$, we get

\begin{theorem}
  With this coproduct, the algebra $\tmop{Conc}_{\bar{H}}$ is a graded,
  cocommutative bialgebra, and thus a Hopf algebra. Moreover the morphism
  $\rho$ is a coalgebra morphism.
\end{theorem}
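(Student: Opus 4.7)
The plan is first to extend the coproduct from the single-letter generators $\emptyset$ and $(\eta)$, $\eta\in\bar{H}$, to the whole concatenation algebra by demanding that $\Delta$ be an algebra morphism with respect to the concatenation product, and to set $\varepsilon(\emptyset)=1$ and $\varepsilon(\tmmathbf{\eta})=0$ for $l(\tmmathbf{\eta})\geqslant 1$. Because $\tmop{Conc}_{\bar{H}}$ is freely generated as an algebra by the single letters together with the unit $\emptyset$, this prescription determines $\Delta$ unambiguously. Once this extension is fixed, every bialgebra axiom reduces to a check on single-letter generators.

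For coassociativity, both $(\Delta\otimes\tmop{Id})\circ\Delta$ and $(\tmop{Id}\otimes\Delta)\circ\Delta$ are algebra morphisms into $\tmop{Conc}_{\bar{H}}^{\otimes 3}$ equipped with the tensor product algebra structure, so one only has to compare them on a letter $(\eta)$; this amounts to the bookkeeping of splitting $\eta=\eta_a+\eta_b+\eta_c$ into three (possibly empty) summands in two different orders. Cocommutativity follows similarly once one notes that the flip $\tau$ is an algebra endomorphism of $\tmop{Conc}_{\bar{H}}\otimes\tmop{Conc}_{\bar{H}}$ and that $\Delta((\eta))$ is manifestly $\tau$-invariant. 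The counit axiom boils down to $(\varepsilon\otimes\tmop{Id})\Delta((\eta))=(\eta)=(\tmop{Id}\otimes\varepsilon)\Delta((\eta))$, and compatibility with the gradation is immediate from $|\eta_1|+|\eta_2|=|\eta|$ in every term of $\Delta((\eta))$. Since $\tmop{Conc}_{\bar{H}}$ is graded and connected, the bialgebra structure automatically produces an antipode and thereby the desired Hopf algebra.

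For the claim that $\rho$ is a coalgebra morphism, I would verify $\Delta_{\mathbbm{C}[x,\partial_x]}\circ\rho=(\rho\otimes\rho)\circ\Delta$ first on single letters: on $(\eta)$ this is exactly the formula recalled just above the theorem for $\Delta(\mathbbm{D}_\eta)$, which matches term by term with $(\rho\otimes\rho)\bigl(\emptyset\otimes(\eta)+\sum_{\eta_1+\eta_2=\eta}(\eta_1)\otimes(\eta_2)+(\eta)\otimes\emptyset\bigr)$. To propagate the identity to arbitrary words, I use that $\Delta_{\mathbbm{C}[x,\partial_x]}$ is an algebra morphism for the composition of differential operators, that $\Delta$ is an algebra morphism for concatenation by construction, and that $\rho$ is an antialgebra morphism. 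The order reversal produced by $\rho$ on a product $\tmmathbf{\eta}^1\tmmathbf{\eta}^2$ is then matched, factor by factor, by the reversal on tensor legs occurring in $\Delta(\rho(\tmmathbf{\eta}^2))\Delta(\rho(\tmmathbf{\eta}^1))$ when it is compared with $(\rho\otimes\rho)\bigl(\Delta(\tmmathbf{\eta}^1)\Delta(\tmmathbf{\eta}^2)\bigr)$.

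The main subtlety to watch is this interplay between the anti-morphism nature of $\rho$ and the multiplicativity of both coproducts: although the computations themselves are routine, the order reversal has to be threaded correctly through all identifications for the coalgebra-morphism property to propagate from generators to every word. Otherwise the proof is essentially parallel to the cosymmetral (shuffle) case treated just before, with the single new ingredient being the appearance of the ``middle'' term $\sum_{\eta_1+\eta_2=\eta}(\eta_1)\otimes(\eta_2)$ in $\Delta((\eta))$, which reflects the fact that the homogeneous components of a substitution automorphism are no longer primitive, in contrast to those of a vector field.
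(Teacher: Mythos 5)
Your argument is correct, but it proceeds differently from the paper. The paper disposes of this theorem in one line by \emph{identifying} $\tmop{Conc}_{\bar{H}}$ with the graded dual of the classical quasishuffle Hopf algebra $\tmop{QSh}_{\bar{H}}$ of Hoffman: concatenation is dual to deconcatenation, and the coproduct $\Delta((\eta)) = \emptyset\otimes(\eta)+\sum_{\eta_1+\eta_2=\eta}(\eta_1)\otimes(\eta_2)+(\eta)\otimes\emptyset$, extended multiplicatively, is dual to the quasishuffle product; all the bialgebra axioms are then inherited by duality from known properties of $\tmop{QSh}_{\bar{H}}$. You instead give a direct, self-contained verification: you use freeness of the concatenation algebra on the letters to extend $\Delta$ unambiguously as an algebra morphism, reduce coassociativity, cocommutativity, the counit axiom and compatibility with the grading to checks on a single letter (the three-fold splitting $\eta=\eta_a+\eta_b+\eta_c$, the manifest flip-invariance of $\Delta((\eta))$, etc.), and invoke graded connectedness for the antipode. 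For the statement about $\rho$, the paper says nothing beyond the duality remark, whereas you correctly isolate the one point that actually requires care: since $\rho$ is an algebra \emph{anti}morphism, $\rho\otimes\rho$ is an antimorphism of the tensor-square algebra, and this reversal is exactly compensated by the multiplicativity of the coproduct on $\mathbbm{C}[x,\partial_x]$, so that checking $\Delta\circ\rho=(\rho\otimes\rho)\circ\Delta$ on the letters (where it is precisely the displayed formula for $\Delta(\mathbbm{D}_\eta)$) suffices. Your route is longer but makes explicit the verifications the paper delegates to the literature, and it does not require knowing in advance that the quasishuffle product is a deconcatenation-coalgebra morphism; the paper's route is shorter and situates the object within a standard family. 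Both are sound.
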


The proof is quite trivial since this Hopf algebra is the graded dual of a
classical quasishuffle Hopf algebra noted $\tmop{QSh}_{\bar{H}}$ (for
quasishuffle Hopf algebra on $\bar{H}$, see \cite{hoffman}) whose product
(resp. coproduct) is given by the usual quasishuffle product (resp.
deconcatenation coproduct). And, once again,

\begin{theorem}
  The map
  \[ \begin{array}{ccccc}
       S_{\rho} & : & \mathcal{C} (\tmop{QSh}_{\bar{H}}, \mathbbm{C}) &
       \rightarrow & \tmmathbf{G}\\
       &  & \chi & \mapsto & \tmop{ev} \left( \sum_{\tmmathbf{\eta} \in
       \tmmathbf{H}} \chi (\tmmathbf{\eta}) \rho (\tmmathbf{\eta}) \right)
     \end{array} \]
  defines a morphism of groups and $F^{\chi} = \sum_{\tmmathbf{\eta} \in
  \tmmathbf{H}} \chi (\tmmathbf{\eta}) \rho (\tmmathbf{\eta})$ is the
  substitution automorphism associated to $S_{\rho} (\chi)$.
\end{theorem}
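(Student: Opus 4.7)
The plan is to mimic almost verbatim the proof of the shuffle version (Theorem on symmetral moulds): the only structural difference is that the coproduct on $\tmop{Conc}_{\bar H}$ now contains the extra term $\sum_{\eta_1+\eta_2=\eta}(\eta_1)\otimes(\eta_2)$, which is exactly what dualizes shuffle to quasishuffle. So the strategy is to show first that $F^{\chi}=\sum_{\tmmathbf{\eta}}\chi(\tmmathbf{\eta})\rho(\tmmathbf{\eta})$ is a well defined substitution automorphism of $\mathbbm{C}[[x]]$, and then that the association $\chi\mapsto S_\rho(\chi)$ reverses products, matching the anti-isomorphism $\tmop{ev}$ introduced in the first proposition on $\tilde{\tmmathbf{G}}$.

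For well-definedness, I note that each $\mathbbm{D}_\eta$ raises the degree by $\eta\in\bar H$ with $|\eta|\ge 1$, so the sum is Krull-convergent, and $F^\chi.x=x+\text{h.o.t.}$, placing $\tmop{ev}(F^\chi)$ in $\tmmathbf{G}$. The core step is multiplicativity. I would compute, for $u,v\in\mathbbm{C}[[x]]$,
\[
F^\chi.(uv)=\pi_{\mathbbm{C}[[x]]}\circ(\rho\otimes\rho)\Bigl(\sum_{\tmmathbf{\eta}}\chi(\tmmathbf{\eta})\,\Delta(\tmmathbf{\eta})\Bigr).(u\otimes v),
\]
using that $\rho$ is a coalgebra morphism (this is precisely the content of the preceding theorem on $\tmop{Conc}_{\bar H}$, whose coproduct matches the action of $\mathbbm{D}_\eta$ on products). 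Then the duality pairing
\[
\langle\Delta(\tmmathbf{\eta}),\tmmathbf{\eta}^1\otimes\tmmathbf{\eta}^2\rangle=\langle\tmmathbf{\eta},\pi_{\tmop{QSh}_{\bar H}}(\tmmathbf{\eta}^1\otimes\tmmathbf{\eta}^2)\rangle
\]
together with the character property $\chi\circ\pi_{\tmop{QSh}_{\bar H}}=\pi_{\mathbbm{C}}\circ(\chi\otimes\chi)$ turns the expression into $(F^\chi.u)(F^\chi.v)$, exactly as in the shuffle case.

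The group morphism property then follows from the same formal argument as before: since $\rho$ is an \emph{anti-}algebra morphism and the convolution $\chi^1\ast\chi^2$ is dual to the concatenation product on $\tmop{Conc}_{\bar H}$, one gets $F^{\chi^1\ast\chi^2}=F^{\chi^2}\circ F^{\chi^1}$, and hence $S_\rho(\chi^1\ast\chi^2)=S_\rho(\chi^1)\circ S_\rho(\chi^2)$ after applying $\tmop{ev}$, which itself is anti-multiplicative. The only delicate point, and therefore the main thing to check carefully, is that the coproduct formula
\[
\Delta(\mathbbm{D}_\eta)=\tmop{Id}\otimes\mathbbm{D}_\eta+\sum_{\eta_1+\eta_2=\eta}\mathbbm{D}_{\eta_1}\otimes\mathbbm{D}_{\eta_2}+\mathbbm{D}_\eta\otimes\tmop{Id}
\]
is genuinely dual to the quasishuffle product and not to the shuffle one: the middle sum accounts precisely for the ``bracketing'' rule in $\tmop{QSh}_{\bar H}$ by which two letters $(\eta_1)$ and $(\eta_2)$ may merge into $(\eta_1+\eta_2)$. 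Once this correspondence is made explicit (which is where decomposing $\bar H$ as an additive semigroup becomes crucial and where a direct verification on the generators $(\eta)$ and extension by induction on length via the morphism property suffices), the rest of the argument is a line-by-line transcription of the symmetral proof.
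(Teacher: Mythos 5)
Your proposal is correct and follows essentially the same route as the paper, which for this theorem simply states that ``the proof is the same as above'' (i.e.\ as in the shuffle case): you transcribe that argument, correctly isolating the only new point, namely that the extra middle term in $\Delta(\mathbbm{D}_\eta)$ is exactly dual to the contracting (quasishuffle) product on $\tmop{QSh}_{\bar H}$. The bookkeeping of the anti-morphism $\rho$ and the anti-isomorphism $\tmop{ev}$ yielding $S_\rho(\chi^1\ast\chi^2)=S_\rho(\chi^1)\circ S_\rho(\chi^2)$ is also handled as in the paper.
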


The proof is the same as above. Once again a mould
$M^{\bullet} = \{M^{\tmmathbf{\eta}}, \hspace{1em} \tmmathbf{\eta} \in
\tmmathbf{H}\}$ defines a linear map from $\tmop{QSh}_{\bar{H}}$ to
$\mathbbm{C}$ and this mould is
\begin{itemizedot}
  \item \tmtextit{symmetrel} if the associated morphism is in $\mathcal{C}
  (\tmop{QSh}_{\bar{H}}, \mathbbm{C})$,
  
  \item \tmtextit{alternel} if the associated morphism is in $c
  (\tmop{QSh}_{\bar{H}}, \mathbbm{C})$.
\end{itemizedot}

Going back to the linearization equation
\[ \Theta_{\varphi} . \Theta_f . \Theta^{\tmop{lin}} = \Theta^{\tmop{lin}} .
   \Theta_{\varphi} \]
with
\[ \Theta_f = \tmop{Id}_{\mathbbm{C} [[x]]} + \sum_{\eta \in \bar{H}}
   \mathbbm{D}_{\eta} \]
\begin{enumeratenumeric}
  \item $f = S_{\rho} (\xi)$ where $\xi$ is the character defined by $\xi
  (\emptyset) = 1$ and, for $s \geqslant 1$,
  \[ \xi ((\eta_1, \ldots, \eta_s)) = \left\{ \begin{array}{lll}
       1 & \tmop{if} & s = 1\\
       0 & \tmop{if} & s \geqslant 2
     \end{array} \right._{} \]
  \item If there exists a character $\chi$ such that
  \begin{equation}
    \chi \circ \sigma = \xi \ast \chi \hspace{1em} (\sigma (\tmmathbf{\eta}) =
    l^{\|\tmmathbf{\eta}\|} \tmmathbf{\eta})
  \end{equation}
\end{enumeratenumeric}
then $\varphi = S_{\rho} (\chi)$ is a solution to the linearization equation.
Finally, we have the following:

\begin{proposition}
  Under the assumption that for any $\eta$ in $\bar{H}$, $l^{\eta} \not= 1$,
  the above equation determines a unique symmetral mould (character) whose
  values are
  \[ \chi (\eta_1, \ldots, \eta_s) = \frac{1}{(l^{\eta_1 + \ldots + \eta_s} -
     1) (l^{\eta_2 + \ldots + \eta_s} - 1) \ldots (l^{\eta_s} - 1)} \]
  and its inverse is given by
  \[ \chi^{\ast^{- 1}} (\eta_1, \ldots, \eta_s) = \frac{(- 1)^s}{(l^{\eta_1} -
     1) \ldots (l^{\eta_1 + \ldots + \eta_{s - 1}} - 1) (l^{\eta_1 + \ldots +
     \eta_s} - 1)} \]
\end{proposition}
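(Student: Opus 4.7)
The proof plan has two parts: deriving the explicit formula for $\chi$ from the character equation, and then computing the convolution inverse. The strategy is to mirror the proof of the analogous proposition for vector fields in the shuffle case.

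First, I would unpack the equation $\chi \circ \sigma = \xi \ast \chi$ on the basis. Since the coproduct in $\tmop{QSh}_{\bar{H}}$ is the deconcatenation, evaluating at $\tmmathbf{\eta} = (\eta_1,\ldots,\eta_s)$ yields
\[
l^{\eta_1+\cdots+\eta_s}\,\chi(\eta_1,\ldots,\eta_s) \;=\; \sum_{k=0}^{s} \xi(\eta_1,\ldots,\eta_k)\,\chi(\eta_{k+1},\ldots,\eta_s).
\]
Because $\xi$ vanishes on every word of length different from $0$ and $1$, equals $1$ on $\emptyset$, and equals $1$ on every single-letter word, this collapses to the simple recursion $(l^{\eta_1+\cdots+\eta_s} - 1)\,\chi(\eta_1,\ldots,\eta_s) = \chi(\eta_2,\ldots,\eta_s)$. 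Under the nonresonance hypothesis each step is solvable, and induction on $s$ starting from $\chi(\emptyset) = 1$ produces the claimed closed form with partial sums $\eta_k + \cdots + \eta_s$ read from the right.

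Next I would verify that the $\chi$ so defined is actually a character of $\tmop{QSh}_{\bar{H}}$ (a symmetrel mould in Ecalle's terminology). The heart of the matter is the two-letter case: one computes
\[
\chi\bigl((\eta) \ast_{\text{qsh}} (\eta')\bigr) = \chi(\eta,\eta') + \chi(\eta',\eta) + \chi(\eta+\eta'),
\]
puts the three terms over the common denominator $(l^\eta - 1)(l^{\eta'} - 1)(l^{\eta+\eta'} - 1)$, and uses the algebraic identity $(l^\eta - 1) + (l^{\eta'} - 1) + (l^\eta - 1)(l^{\eta'} - 1) = l^{\eta+\eta'} - 1$ to collapse the numerator to exactly $\chi(\eta)\chi(\eta')$. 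This semigroup-merging identity is the genuine new content relative to the shuffle case; an induction on total length, using that deconcatenation is compatible with quasishuffling, extends the check to arbitrary words.

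For the inverse formula, the natural route is $\chi^{\ast^{-1}} = \chi \circ S$ where $S$ is the antipode of $\tmop{QSh}_{\bar{H}}$. The main obstacle is that, unlike in $\tmop{Sh}_H$ where $S((\eta_1,\ldots,\eta_s)) = (-1)^s (\eta_s,\ldots,\eta_1)$, the quasishuffle antipode produces additional contributions coming from merging consecutive letters into sums in $\bar H$. I would either compute $S$ recursively via $\pi \circ (\mathrm{id} \otimes S) \circ \Delta = \eta \circ \varepsilon$ and show that after applying $\chi$ the extra merge terms telescope thanks to the same semigroup identity used above, or alternatively bypass the antipode and verify the stated closed form directly by checking $\chi \ast \chi^{\ast^{-1}} = \varepsilon$ on every word by induction on length through the deconcatenation coproduct; the nonresonance hypothesis guarantees that all denominators remain nonzero throughout.
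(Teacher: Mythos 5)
Your treatment of the first two thirds of the statement is correct and is the same route the paper takes: deconcatenation collapses $\chi\circ\sigma=\xi\ast\chi$ to the recursion $(l^{\eta_1+\cdots+\eta_s}-1)\,\chi(\eta_1,\ldots,\eta_s)=\chi(\eta_2,\ldots,\eta_s)$, which under the hypothesis $l^{\eta}\neq 1$ for all $\eta\in\bar H$ has the stated unique solution, and your symmetrelity check via the identity $(l^{\eta}-1)+(l^{\eta'}-1)+(l^{\eta}-1)(l^{\eta'}-1)=l^{\eta+\eta'}-1$ supplies exactly the detail the paper dismisses with ``one can check''.

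The last step, however, cannot be completed as you describe, because the printed formula for $\chi^{\ast^{-1}}$ is not in fact the convolution inverse of $\chi$ in $\tmop{QSh}_{\bar H}$: the contraction terms of the quasishuffle antipode do not telescope away, they change the answer. Already at length two, with $a=l^{\eta_1}-1$ and $c=l^{\eta_1+\eta_2}-1$, the paper's own antipode formula gives $S(\eta_1,\eta_2)=(\eta_2,\eta_1)+(\eta_1+\eta_2)$, so
\[ \chi^{\ast^{-1}}(\eta_1,\eta_2)=\chi(\eta_2,\eta_1)+\chi(\eta_1+\eta_2)=\frac{1}{ca}+\frac{1}{c}=\frac{l^{\eta_1}}{(l^{\eta_1}-1)(l^{\eta_1+\eta_2}-1)}, \]
off from the claimed value by the factor $l^{\eta_1}$; solving $\chi\ast\chi^{\ast^{-1}}=\varepsilon$ degree by degree through deconcatenation gives the same thing. (One can also see the claimed expression cannot be right on general grounds: it is not symmetrel, while the convolution inverse of a character is automatically a character.) The correct closed form is $\chi^{\ast^{-1}}(\eta_1,\ldots,\eta_s)=(-1)^s\prod_{k=1}^{s-1}l^{\eta_1+\cdots+\eta_k}\big/\prod_{k=1}^{s}(l^{\eta_1+\cdots+\eta_k}-1)$; the printed one is the naive transposition of the shuffle-case formula from the vector-field proposition, where no numerator factors arise because the shuffle antipode is a pure signed reversal. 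So your second route (direct verification of $\chi\ast\chi^{\ast^{-1}}=\varepsilon$) is the right instinct, but carrying it out refutes the displayed inverse rather than proving it; the assertion that the merge terms ``telescope thanks to the same semigroup identity'' is the step that fails.
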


These have been known for a long time, using mould calculus (see {\cite{snag}}).
As for vector fields, we have $\chi (\emptyset) = 1$ and, for $\eta \in H$ and
$\tmmathbf{\eta} \in \tmmathbf{H}$, the equation for the linearization
character reads:
\[ l^{\eta + \|\tmmathbf{\eta}\|} \chi (L_+^{\eta} (\tmmathbf{\eta})) = (\xi
   \ast \chi) (L_+^{\eta} (\tmmathbf{\eta}))) = \xi (\eta) \chi
   (\tmmathbf{\eta}) + \chi (L_+^{\eta} (\tmmathbf{\eta})) = \chi
   (\tmmathbf{\eta}) + \chi (L_+^{\eta} (\tmmathbf{\eta})) \]
One can check that this is a character (symmetrel mould) and $\chi^{\ast^{-
1}}$ can be either computed directly or as the inverse of the character
$\chi$. In the latter case, since $\chi$ is a character,
\[ \chi^{\ast^{- 1}} = \chi \circ S \]
and the antipode in $\tmop{QSh}_{\bar{H}}$ is also given by
\[ S (\eta_1, \ldots, \eta_s) = (- 1)^s \sum_{\tmmathbf{\eta}=
   (\tmmathbf{\eta}^1_{}  \ldots  \tmmathbf{\eta}^t)} ( \|
   \tmmathbf{\eta}^t \|, \ldots, \| \tmmathbf{\eta}^1 \|) \]
{\color{red} }

(the sum involves all the decompositions of the sequence $\tmmathbf{\eta}$ by
concatenation of non-empty subsequences $\tmmathbf{\eta}^i$).

\subsection{Analyticity and the need for some intermediate Hopf algebras.}

To sum up the previous sections, under some algebraic condition on $\lambda$
or $l$, one can perform the linearization with the help of a formal
diffeomorphism, whose substitution automorphism is given by a character $\chi$
:
\[ \Theta_{\varphi} = \sum_{\tmmathbf{\eta} \in \tmmathbf{H}} \chi
   (\tmmathbf{\eta}) \rho (\tmmathbf{\eta}) \]
Under some classical diophantine condition, we shall prove below that such
characters have a geometric growth, meaning that an estimate of the following
type is satisfied ($C$ being a constant):
\[ | \chi (\tmmathbf{\eta}) | \leqslant C^{\tmop{gr} (\tmmathbf{\eta})} \]
so that one could hope that the associated diffeomorphism will be analytic.
However, this kind of estimates are not sufficient. The reason is the
following : if
\[ \varphi_i (x) = x_i \left( 1 + \sum_{\eta \in \Eta} a^i_{\eta} x^{\eta}
   \right) \]
then
\[ x_i a^i_{\eta} x^{\eta} = \sum_{\|\tmmathbf{\eta}\| = \eta} \chi
   (\tmmathbf{\eta}) \rho (\tmmathbf{\eta}) .x_i \]
and the coefficient in $\rho (\tmmathbf{\eta}) .x_i$ tends to grow factorially
with the length of $\tmmathbf{\eta}$, an inevitable feature if we try to bound
by brute force the size of the composition of $r$ ordinary differential
operators : some $r$! factors appear. For example, in dimension 1,
\[ (t^2 \partial_t)^r .t = (r - 1) !t^{r + 1} \]
But, on the other hand, this does not mean that the diffeomorphism is
divergent : many terms contribute to a same power of $x$ and some
{\tmem{compensations}} may arise. Indeed this is the case and, surprisingly,
this compensation phenomenon can be taken into account, using the so-called
arborification--coarborification process which, algebraically, relies on the
use of the Connes--Kreimer Hopf algebra, as we shall see next. \

In fact, the situation can profitably be described as such:

-- Direct calculations at the level of diffeomorphisms immediately translate
into recursive relations in the Fa{\`a} di Bruno Hopf algebra, which are difficult to solve
because the coproduct in $\mathcal{H}_{\tmop{FdB}}$ is a complicated one, a complexity that mirrors
the Fa{\`a} di Bruno formula for the computation of the $n^{\tmop{th}}$ coefficient of the
composition of 2 formal series.

-- Mould--comould expansions, on the contrary, lead to simple equations for
moulds (be that in the symmetral or the symmetrel case); this simplicity is
itself an image of the simplicity of the coproducts of the shuffle or
quasishuffle Hopf algebras. These equations yield in fact {\tmem{closed--form
expressions}} for the sought moulds, which are {\tmem{surprisingly
explicit}}.Yet, when one wants to go beyond the formal level, to eventually
get analytic transformations, these expressions are too coarse: although its
is usually relatively easy to prove geometric growth estimates based on the
explicit mould formulas, the inevitable factorial growth that composing
differential operators brings along, is an obstacle to convergence.

There is thus a need for some {\tmstrong{intermediate Hopf algebra}} for which
the calculations are still tractable, yet efficient enough to yield analytic
functions when needed. This is exactlty what arborification--coarborification
does and, in terms of Hopf algebras, the decorated Connes--Kreimer algebra
will then rather naturally enter the stage.

\section{Arborification--Coarborification.}\label{s:arbor}

\subsection{Hopf algebras of trees.}

We use here the results and notations developped in {\cite{ck}},
{\cite{foissy1}} and {\cite{foissy2}}. A (non--planar) rooted tree $T$ is a
connected and simply connected set of oriented edges and vertices such that
there is precisely one distinguished vertex (the root) with no incoming edge.
An alternative definition can be given in terms of posets containing a
smallest element, and for which each element has at most one predecessor. A
forest $F$ is a monomial in rooted trees. Let $l (F)$ be the number of
vertices in $F$. Using the set $H$ we can decorate a forest, that is to say
that, to each vertex $v$ of $F$, we associate an element $h (v)$ of $H$. We
note $\mathcal{T}_H$ (resp. $\mathcal{F}_H$) the set of decorated trees (resp.
forests) that contains the empty tree noted $\emptyset$. In fact there is a
natural equivalence relation for trees, two trees being equivalent iff there
is an automorphism of decorated posets that sends one to the other. It is
rather the set of equivalent classes of trees that is denoted by
$\mathcal{T}_H$, using a traditional abuse of language. As for sequences, if a
forest $F$ is decorated by $\eta_1, \ldots, \eta_s$ ($l (F) = s$), we note
\[ \|F\| = \eta_1 + \ldots + \eta_s \in \bar{H}, \hspace{1em} \tmop{gr} (F) =
   \tmop{gr} (\eta_1) + \ldots \tmop{gr} (\eta_s) \]
For example, if
\[ T = \BeunedeuxSetroisequatre
\]
then $l (T) = 4$ and $\|T\| = \eta_1 + \eta_2 + \eta_3 + \eta_4$.

Let us also recall that, for $\eta$ in $H$, the operator $B_{\eta}^+$
associates to a forest of decorated trees the tree with root decorated by
$\eta$ connected to the roots of the forest : $B_{\eta}^+ (\emptyset)$ is the
tree with one vertex decorated by $\eta$ and for example :

 \[ B_{\eta}^+ \Big ( \BeunedeuxSetroisequatre \quad \Secinqesix  \Big ) = \BeBeunedeuxSetroisequatreSecinqesix
\]

The linear span $\tmop{CK}_H$ of $\mathcal{F}_H$ is a graded commutative
algebra for the product
\[ \pi (F_1 \otimes F_2) = F_1 F_2 \]
and the unit $\emptyset$. Moreover, with the coproduct $\Delta$ given by
induction by $\Delta (\emptyset) = \emptyset \otimes \emptyset$, $\Delta (T_1
\ldots T_k) = \Delta (T_1) \ldots \Delta (T_k)$ and
\[ \Delta (B_{\eta}^+ (F)) = B_{\eta}^+ (F) \otimes \emptyset + (\tmop{Id}
   \otimes B_{\eta}^+) \circ \Delta (F) \]
$\tmop{CK}_H$ is the Connes-Kreimer Hopf algebra of trees decorated by $H$.

There exists a combinatorial description of this coproduct (see
{\cite{foissy1}}). For a given tree $T \in \mathcal{T}_H$, an admissible cut
$c$ is a subset of its vertices such that, on the path from the root to an
element of $c$, no other vertex of $c$ is encountered. For such an admissible
cut, $P^c (T)$ is the \ product of the subtrees of $T$ whose roots are in $c$
and $R^c (T)$ is the remaining tree, once these subtrees have been removed.
With these definitions, for any tree $T$, we have
\[ \Delta (T) = \sum_{ \tmop{adm}\  \tmop{cut}} P^c (T) \otimes R^c (T) \]
For example,
\begin{eqnarray*}
 \Delta  \Big ( \Ceunedeuxetrois \Big ) & = & \Ceunedeuxetrois \otimes \   \emptytree 
  + \quad \etrois  \otimes  \Seunedeux
+ \edeux \otimes  \Seunetrois \\
& & \quad + \quad \edeuxetrois \otimes  \eun  +  \emptytree \  \otimes   \Ceunedeuxetrois 
 \end{eqnarray*}

Once again we can consider the convolution algebra $\mathcal{L} (\tmop{CK}_H,
\mathbbm{C})$ and any morphism $u$ of this algebra is given by its values on
the basis $\mathcal{F}_H$. The definitions of arborescent moulds can then be
rephrased :

\begin{definition}
  An arborescent mould $M^{\bullet^<}$ on $H$ with values in $\mathbbm{C}$ is
  a collection of complex numbers$\{M^F \in \mathbbm{C} \nocomma, \hspace{1em}
  F \in \mathcal{F}_H \}$.
\end{definition}

Such arborescent moulds are in one to one correspondance with the elements of
$\mathcal{L} (\tmop{CK}_H, \mathbbm{C})$ and the product of such moulds
corresponds to the convolution of the associated linear morphism.

Note that
\begin{enumeratenumeric}
  \item a character on $\tmop{CK}_H$ defines a \tmtextit{separative} mould
  $M^{\bullet^<}$, i.e.
  \[ M^{T_1 \ldots T_s} = M^{T_1} \ldots M^{T_s} \hspace{1em} (\tmop{and}
     M^{\emptyset} = 1) \]
  \item an infinitesimal character on $\tmop{CK}_H$ defines a
  \tmtextit{antiseparative} mould $M^{\bullet^<}$, i.e. for $s \geqslant 2$,
  \[ M^{T_1 \ldots T_s} = 0 \hspace{1em} (\tmop{and} M^{\emptyset} = 0) \]
\end{enumeratenumeric}
Since the coproduct is not as trivial as before, the convolution and inversion
of characters are not so easy to handle. Nonetheless, we get partial but
useful formulas for ``root'' characters, namely characters vanishing on
forests $T_1 \ldots T_s$ such that at least one of the trees $T_i$ as more
than one vertex. For such a character $\chi$, we have that
\[ \forall u \in \mathcal{L} (\tmop{CK}_H, \mathbbm{C}), \hspace{1em} \forall
   T = B^+_{\eta} (F) \in \mathcal{T}_H, \hspace{1em} (u \ast \chi) (T) = u
   (T) + u (F) \chi (\bullet_{\eta}) \]
and one can deduce that for any tree $T$ decorated by $\eta_1, \ldots, \eta_s$
($l (T) = s$)
\[ \hspace{1em} \chi^{\ast^{- 1}} (T) = (- 1)^{l (T)} \chi (\bullet_{\eta_1})
   \ldots \chi (\bullet_{\eta_s}) \]

The graded dual of $\tmop{CK}_H$ will play a crucial role in the sequel and
is strongly related to the Grossman-Larson Hopf algebra
$\mathcal{\tmop{GL}}_H$ (see {\cite{gl1}}, {\cite{gl2}}, {\cite{hoffman}} and
{\cite{zhao1}}). The algebra $\mathcal{\tmop{GL}}_H$ is the linear span of
rooted trees whose vertices (except the root) are decorated by $H$ (see
{\cite{foissy2}}) : using $0$ to note the absence of decoration, any such tree
can be written $B_0^+ (F)$ where $F$ is in $\mathcal{F}_H$.

Let $F = T_1 \ldots T_k \in \mathcal{F}_H$ and $T_0 \in B_0^+ (\mathcal{F}_H)$, the product of $B^+_0(F)$ and $T_0$ in $\mathcal{\tmop{GL}}_H$ is defined as follows: for any sequence $\tmmathbf{s} = (s_1, \ldots s_k)$ of vertices of $T_0$ (with possible repetitions), let $(T_1, \ldots, T_k) \circ_{\tmmathbf{s}} T_0$ be
the tree of $B^+_0 (F)$ obtained by identifying the root of $B_0^+(T_i)$ with the
vertex $s_i$ in $T_0$. The product $\pi$ in $\mathcal{\tmop{GL}}_H$ is then defined by
\[ B_0^+ (T_1 \ldots T_k) .T_0 = \sum_{\tmmathbf{s}} (T_1, \ldots, T_k)
   \circ_{\tmmathbf{s}} T_0 \]
and the unit is $B_0^+ (\emptyset)$. The coproduct is given by
\[ \Delta (B^+_0 (T_1 \ldots T_k)) = \sum_{I \subseteq \{1, \ldots, k\}} B_0^+
   (T_I) \otimes B_0^+ (T_{\{1, \ldots k\} - I}) \]
Where $I$ is any subset of $\{1, \ldots, k\}$ and $T_I = \prod_{i \in I} T_i$.

For a forest $F$ in $\mathcal{F}_H$ we remind that the \ symmetry factor of
$F$ is defined by :
\begin{enumeratenumeric}
  \item $s ((\eta)) = 1$ ;
  
  \item $s (B^+_{\eta} (F)) = s (F)$ ;
  
  \item $s (T_1^{a_1} \ldots T_k^{a_k}) = s (T_1)^{a_1} \ldots s (T_k)^{a_k}
  a_1 ! \ldots a_k$! if $T_1$,...,$T_k$ are {\tmem{distinct}} rooted trees.
\end{enumeratenumeric}
This factor $s (F)$ \ is the cardinal of the group of automorphisms of the
decorated poset $F$.

We have the following result, which is by now a classical one, and for which
various proofs are available ({\cite{foissy2}}, {\cite{kreimer_chen}},
{\cite{hoffman}}, {\cite{zhao1}}).

\begin{lemma}
  \label{Foissy} The map $\phi$ from $\tmop{GL}_H$ to $\tmop{CK}_H^{\circ}$
  defined by $\phi (B^+_0(F)) = s_F F$ defines an isomorphism of graded Hopf
  algebras between $\tmop{GL}_H$ and $\tmop{CK}_H^{\circ}$.
\end{lemma}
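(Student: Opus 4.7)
The plan is to verify the three structural requirements separately: $\phi$ is a bijective graded linear map, $\phi$ is a coalgebra morphism, and $\phi$ is an algebra morphism. Since the Hopf algebras are graded and connected, compatibility with the antipode is then automatic, so nothing more needs to be checked.

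First, the linear and graded bijectivity is immediate. A basis of $\tmop{GL}_H$ is given by the trees $B_0^+(F)$ for $F \in \mathcal{F}_H$, and a basis of $\tmop{CK}_H^\circ$ is the dual basis $(F^*)_{F \in \mathcal{F}_H}$, which under the canonical pairing $\langle F, G \rangle = \delta_{F,G}$ gets identified with $\mathcal{F}_H$ itself. The map $\phi$ sends the basis vector $B_0^+(F)$ to $s_F \cdot F$, a nonzero scalar multiple of a basis vector, and sends degree $n$ elements (as measured by number of non-root vertices on one side, number of vertices on the other) to degree $n$ elements. So $\phi$ is a graded linear isomorphism.

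Next, I would verify that $\phi$ is a coalgebra morphism. The coproduct in $\tmop{CK}_H^\circ$ is the transpose of the commutative concatenation product of forests in $\tmop{CK}_H$. Using the pairing, the identity to check is
\[
\bigl\langle \Delta_{\tmop{CK}^\circ}(\phi(B_0^+(F))),\, F_1 \otimes F_2 \bigr\rangle
= \bigl\langle \phi(B_0^+(F)),\, F_1 F_2 \bigr\rangle
\]
against $\bigl\langle (\phi \otimes \phi) \Delta_{\tmop{GL}}(B_0^+(F)),\, F_1 \otimes F_2 \bigr\rangle$. Writing $F = T_1 \ldots T_k$ and expanding the Grossman--Larson coproduct as a sum over subsets $I \subseteq \{1,\ldots,k\}$, one reduces to the purely combinatorial identity
\[
s_F \, N(F; F_1, F_2) = \sum_{I} s_{F_I} s_{F_{\{1,\ldots,k\}\setminus I}}\, \delta_{F_I, F_1}\delta_{F_{\{1,\ldots,k\}\setminus I}, F_2},
\]
where $N(F; F_1, F_2)$ counts the ways to split the multiset of tree factors of $F$ as an ordered pair equal to $(F_1,F_2)$. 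This follows from the multinomial description of $s(T_1^{a_1} \ldots T_k^{a_k}) = \prod s(T_i)^{a_i} a_i!$ applied to $F$, $F_1$ and $F_2$.

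The algebra morphism property is where the main work lies. Dualising the Connes--Kreimer coproduct (described via admissible cuts), one needs to prove
\[
\phi\bigl(B_0^+(F_1) \cdot_{\tmop{GL}} B_0^+(F_2)\bigr) = \phi(B_0^+(F_1)) \ast \phi(B_0^+(F_2)),
\]
which under the pairing reads: for every tree $T \in \mathcal{T}_H$,
\[
s_T \cdot \#\{\text{graftings of } F_1 \text{ onto } B_0^+(F_2) \text{ producing } T\}
= s_{F_1} s_{F_2} \cdot \#\{\text{admissible cuts } c \text{ of } T : P^c(T) = F_1,\ R^c(T) = B_0^+(F_2)\}.
\]
I would prove this bijectively. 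A grafting is an ordered choice, for each root of a tree factor of $F_1$, of an attaching vertex in $B_0^+(F_2)$, so graftings are parametrised by sequences of vertices; conversely, an admissible cut recovers $F_1$ as the unordered collection of subtrees removed, together with the data of which vertex of the remnant each subtree was attached to. Passing from the sequence point of view to the unordered cut point of view loses exactly the factor counting orderings of equal tree factors of $F_1$, which contributes one factor that rearranges into symmetry factors; the symmetry factor of $T$ itself accounts for the repetitions that arise when several graftings produce the same tree. Carefully bookkeeping these factors using the recursive definition of $s_F$ yields the required identity. The hard part, as expected, is precisely this matching of symmetry factors between the grafting count (with repetition) and the cut count, but the multiplicativity $s(T_1^{a_1}\ldots T_k^{a_k}) = \prod s(T_i)^{a_i} a_i!$ combined with the orbit--stabiliser principle applied to the action of $\mathrm{Aut}(T)$ on the set of cuts of type $(F_1, B_0^+(F_2))$ makes the identity fall into place. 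Once $\phi$ is shown to be a graded bialgebra isomorphism, the antipode compatibility comes for free from the uniqueness of the antipode in a graded connected Hopf algebra.
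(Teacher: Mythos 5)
The paper does not actually prove this lemma: it is quoted as a classical result, with references to Foissy, Kreimer, Hoffman and Zhao, so there is no in-paper argument to compare against. Your plan is essentially the standard proof from that cited literature (closest to Hoffman's counting argument), and the architecture --- graded linear bijection, coalgebra morphism by dualising concatenation, algebra morphism by matching graftings against admissible cuts, antipode for free --- is the right one. Three points need repair before it is a proof rather than a plan. First, in the coalgebra step the left-hand side of your displayed identity should be $s_F\,\delta_{F,F_1F_2}$, not $s_F\,N(F;F_1,F_2)$: the pairing $\langle\phi(B_0^+(F)),F_1F_2\rangle$ produces a single Kronecker delta, whereas the sum over subsets $I$ on the right already equals $s_{F_1}s_{F_2}\,N(F;F_1,F_2)$; the identity to verify is therefore $s_F=s_{F_1}s_{F_2}\,N(F;F_1,F_2)$ whenever $F=F_1F_2$, which is indeed the multinomial computation you indicate. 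Second, in the algebra step the Grossman--Larson product allows trees of $F_1$ to be grafted at the root $0$ of $B_0^+(F_2)$, so the outputs are $B_0^+(G)$ with $G$ a genuine (possibly disconnected) forest; writing $m(F_1,F_2;G)$ for the number of grafting sequences $\tmmathbf{s}$ producing $B_0^+(G)$, the identity to establish is
\[ m(F_1,F_2;G)\, s_G \;=\; s_{F_1}\, s_{F_2}\cdot\#\bigl\{\text{admissible cuts } c \text{ of } G:\ P^c(G)=F_1,\ R^c(G)=F_2\bigr\} \]
for \emph{all} forests $G\in\mathcal{F}_H$ (with the cut coproduct extended multiplicatively), not only for trees $T$, and the remnant condition should read $R^c(G)=F_2$, not $R^c(T)=B^+_0(F_2)$. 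Third, the orbit--stabiliser matching of symmetry factors between grafting sequences and unordered cuts is named but not carried out; that computation is precisely the nontrivial content of the lemma, so as written the proposal is a correct and well-aimed outline rather than a complete argument.
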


\subsection{Homogeneous coarborification.}

In each case (Vector Fields or Diffeomorphisms), the initial object defines a
morphism $\rho$ from $\tmop{Sh}_{\Eta}^{\circ}$ or $\tmop{Qsh}_H^{\circ}$ to
$\mathbbm{C} [x, \partial_x]$ which is a coalgebra morphism and an algebra
antimorphism that allows to compute some diffeomorphisms as characters on
$\tmop{Sh}_{\Eta}^{}$ or $\tmop{Qsh}_H^{}$. We will essentially follow the
same lines but with a morphism $\rho^<$ from $\tmop{CK}^{\circ}_H$ to
$\mathbbm{C} [x, \partial_x]$. Starting with this map $\rho$, \ one can
define, using Ecalle's homogeneous coarborification the following linear
morphism :

\begin{definition}
  The linear morphism $\rho^<$ from $\tmop{CK}^{\circ}_H$ to $\mathbbm{C} [x,
  \partial_x]$ is defined on its linear basis by the following rules
  \begin{enumeratenumeric}
    \item $\rho^< (\emptyset) = \tmop{Id}$,
    
    \item If $T = B^+_{\eta} (F)$ is a non empty tree, then
    \[ \rho^< (T) = \sum_{i = 1}^{\nu} (\rho^< (F) . (\rho (\eta) .x_i))
       \partial_{x_i} \]
    \item If $F = T_1 \ldots T_s$ with $s \geqslant 2$, then
    \[ \rho^< (F) = \frac{1}{d_1 ! \ldots d_k !} \sum_{1 \leqslant i_1,
       \ldots, i_s \leqslant \nu} (\rho^< (T_1) .x_{i_1}) \ldots (\rho^< (T_s)
       .x_{i_s}) \partial_{x_{i_1}} \ldots \partial_{x_s} \]
    where $F = T_1 \ldots T_s$ is the product of $k$ distinct decorated trees,
    with multiplicities $d_1, \ldots, d_k$ ($d_1 + \ldots + d_k = s$).
  \end{enumeratenumeric}
\end{definition}

From this recursive definition, one already see that the differential operator
$\rho^< (F)$ is of order $r (F)$ (number of roots) and of homogeneity $\|F\|$.
Thanks to the order of $\rho^< (F)$, this morphism is a coalgebra morphism and
we have in fact the following:

\begin{theorem}
  $\rho^<$ is a Hopf morphism.
\end{theorem}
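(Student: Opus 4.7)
The plan is to verify the algebra morphism property of $\rho^<$, since the coalgebra morphism property is already recorded in the remark preceding the statement (this can be checked by induction on trees, using that $\rho^<(F)$ acts as a differential operator whose order equals the number of roots $r(F)$ of $F$, together with the inductive clause $\rho^<(B_\eta^+(F))=\sum_i(\rho^<(F).(\rho(\eta).x_i))\partial_{x_i}$). Once both algebra and coalgebra compatibility are known, the compatibility with antipodes is automatic for graded connected Hopf algebras.

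To control the algebra morphism property, I would first transfer the question to the Grossman--Larson side. By Lemma \ref{Foissy} the map $\phi:\tmop{GL}_H\to\tmop{CK}^\circ_H$, $\phi(B^+_0(F))=s_F F$, is an algebra isomorphism; hence it suffices to prove that $\psi:=\rho^<\circ\phi:\tmop{GL}_H\to\mathbb{C}[x,\partial_x]$ is an algebra morphism, where the product on the right is composition of differential operators. The advantage is that the product on $\tmop{GL}_H$ is given by the concrete grafting rule
\[
B_0^+(T_1\cdots T_k)\cdot T_0=\sum_{\mathbf{s}}(T_1,\ldots,T_k)\circ_{\mathbf{s}}T_0,
\]
which is well suited to comparison with the Leibniz rule.

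I would then proceed by induction on the total number of vertices of the trees involved. The inductive step requires comparing $\psi\bigl(B_0^+(T_1\cdots T_k)\cdot T_0\bigr)=\sum_{\mathbf{s}}\psi\bigl((T_1,\ldots,T_k)\circ_{\mathbf{s}}T_0\bigr)$ with the composition $\psi(B_0^+(T_1\cdots T_k))\circ\psi(T_0)$. Expanding the latter by the Leibniz rule, each of the derivatives sitting in $\psi(B_0^+(T_1\cdots T_k))$ (one per $T_i$) can either act on one of the $x_{j_\ell}$ factors sitting inside $\psi(T_0)$, or commute past all of them and survive as a derivative on the output; the first option corresponds to grafting $B_0^+(T_i)$ at the chosen vertex of $T_0$, the second to grafting at the root labelled $0$. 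The recursive clause 2 in the definition of $\rho^<$ is designed precisely so that these grafting configurations translate into the Leibniz expansion.

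The main obstacle is the careful accounting of symmetry factors: the factor $s_F$ hidden in $\phi$, the multinomial factor $1/(d_1!\cdots d_k!)$ in clause 3 of the definition of $\rho^<$, and the multiplicities that appear both in the grafting sum (when several $T_i$ coincide, or several placements yield isomorphic trees) and in the Leibniz expansion. The combinatorial identity that needs to be checked is that the stabilizer orders of the various automorphism groups of decorated forests match up exactly with these factorials. A clean way to verify this is to evaluate both sides on an arbitrary monomial $x^m\in\mathbb{C}[[x]]$, reducing the claim to an identity indexed by decorated forests that can be proved by classifying pairs (forest, grafting) up to isomorphism and matching them with pairs (term in the Leibniz expansion, choice of placement). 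Once this bookkeeping is dispatched, the induction closes and $\psi$, hence $\rho^<$, is an algebra morphism, completing the proof that $\rho^<$ is a Hopf morphism.
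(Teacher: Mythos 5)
Your proposal follows the same structural reduction as the paper: pass to the Grossman--Larson side via the isomorphism $\phi$ of Lemma \ref{Foissy}, and exploit the fact that the grafting product on $\tmop{GL}_H$ is mirrored by the Leibniz rule for composition of differential operators. The difference is in how the key fact is established. The paper simply \emph{cites} the Grossman--Larson/Zhao result that the map $\tau$ (sending a $\tmop{GL}_H$-tree to the corresponding elementary differential) is a Hopf morphism, observes that $\rho^< = \tau\circ\phi^{-1}$ by comparing the recursive definitions, and concludes since $\phi^{-1}$ is a Hopf morphism by Foissy; the Leibniz computation it displays is only an illustrative example. You instead propose to \emph{re-prove} that result: your $\psi=\rho^<\circ\phi$ is exactly the paper's $\tau$ (note $\psi(B_0^+(F))=s_F\,\rho^<(F)$, and the $a_1!\cdots a_k!$ inside $s_F$ cancels the $1/(d_1!\cdots d_k!)$ of clause 3 in the definition of $\rho^<$, which is precisely why the two recursions agree), and you outline an induction matching grafting positions with terms of the Leibniz expansion. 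This is a legitimate and more self-contained route, and your handling of the coproduct side and of the antipode (automatic for bialgebra morphisms between Hopf algebras) is fine. Be aware, though, that the symmetry-factor bookkeeping you flag as ``the main obstacle'' is not a side issue but the entire substance of the cited Grossman--Larson theorem: as written, your argument identifies where the work lies (matching automorphism-group orders of decorated forests with the multinomial factors arising when several $T_i$ coincide or several graftings give isomorphic trees) without actually carrying it out, so to be complete you would either have to execute that classification of pairs (forest, grafting) up to isomorphism in detail, or fall back on the citation as the paper does.
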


\begin{proof}
  The proof is based on the following result of Grossman and Larson (see
  {\cite{zhao1}}, {\cite{zhao2}}): Let $\tau$ the map from $\tmop{GL}_H$ to
  $\mathbbm{C} [x, \partial_x]$ defined by
  \begin{enumeratenumeric}
    \item $\tau (B^+_0 (\emptyset)) = \tmop{Id}$ ,
    
    \item If $T = B^+_0 (t)$ where $t = B^+_{\eta} (t_1 \ldots t_s)$ is a tree
    of $\mathcal{F}_H$, then
    \[ \tau (T) = \sum_{i = 1}^{\nu} (\tau (B^+_0 (t_1 \ldots t_s)) . (\rho
       (\eta) .x_i)) \partial_{x_i} \]
    \item If $T = B^+_0 (t_1 \ldots t_s)$ ($s \geq 2$), then
    \[ \tau (B^+_0(t_1 \ldots t_s)) = \sum_{1 \leqslant i_1, \ldots, i_s
       \leqslant \nu} (\tau (B^+_0(t_1)) .x_{i_1}) \ldots (\tau (B^+_0(t_s))
       .x_{i_s}) \partial_{x_{i_1}} \ldots \partial_{x_{i_s}} \]
  \end{enumeratenumeric}
  Then $\tau$ is a Hopf morphism (the differential operators thus recursively defined are also known as elementary differentials in the literature on B--series, etc). One can convince oneself with the following
  example where :
  \[ T_1 =  \Szeroeun
 \hspace{1em} T_2 = \Czeroedeuxetrois
\nocomma,\quad  \pi (T_1 \otimes
     T_2) =  \Czeroeunedeuxetrois+ \BzeroetroisSedeuxeun
+ \BzeroedeuxSetroiseun \]
  We have :
  \[ \tau (T_1) = \sum_{i_1 = 1}^{\nu} (\rho (\eta_1) .x_{i_1})
     \partial_{x_{i_1}} \hspace{1em}, \hspace{1em} \tau (T_2) = \sum_{i_2, i_3
     = 1}^{\nu} (\rho (\eta_2) .x_{i_2}) (\rho (\eta_3) .x_{i_3})
     \partial_{x_{i_2}} \partial_{x_{i_3}} \]
  and, using Leibniz rule,
  \[ \begin{array}{ccc}
       \tau (T_1) . \tau (T_2) & = & \ds \left( \sum_{i_1 = 1}^{\nu} (\rho
       (\eta_1) .x_{i_1}) \partial_{x_{i_1}} \right)  \left( \sum_{i_2, i_3 =
       1}^{\nu} (\rho (\eta_2) .x_{i_2}) (\rho (\eta_3) .x_{i_3})
       \partial_{x_{i_2}} \partial_{x_{i_3}} \right) \\
       & = & \ds \left( \sum_{i_1, i_2, i_3 = 1}^{\nu} (\rho (\eta_1) .x_{i_1})
       (\rho (\eta_2) .x_{i_2}) (\rho (\eta_3) .x_{i_3}) \partial_{x_{i_1}}
       \partial_{x_{i_2}} \partial_{x_{i_3}} \right.\\
       &  & \ds + \sum_{i_1, i_2, i_3 = 1}^{\nu} \left( (\rho (\eta_1) .x_{i_1})
       (\partial_{x_{i_1}} (\rho (\eta_2) .x_{i_2})) \right) (\rho (\eta_3)
       .x_{i_3}) \partial_{x_{i_2}} \partial_{x_{i_3}}\\
       &  & \ds \left. + \sum_{i_1, i_2, i_3 = 1}^{\nu} (\rho (\eta_2) .x_{i_2})
       \left( (\rho (\eta_1) .x_{i_1}) (\partial_{x_{i_1}} (\rho (\eta_3)
       .x_{i_3})) \right) \partial_{x_{i_2}} \partial_{x_{i_3}} \right)\\
       & = & \ds \tau \Big ( \Czeroeunedeuxetrois \Big) + \tau \Big ( \BzeroetroisSedeuxeun \Big ) + \tau \Big (  \BzeroedeuxSetroiseun \Big )
     \end{array} \]
  But, thanks to the recursive definition of $\rho^<$, $\tau$ and $\phi$, we
  have $\rho^< = \tau \circ \phi^{- 1}$ and , since both $\tau$ and $\phi^{-
  1}$ (see \cite{foissy1, foissy2}) are Hopf morphisms, so is $\rho^<$.
\end{proof}

Note that the construction of $\tau$ was given by Grossman and Larson only for
the case of a family of derivations, which would exactly correspond here to
the homogeneous components $\mathbbm{B}_{\eta}$ of a vector field. In the case
of the homogeneous components $\mathbbm{D}_{\eta}$ of a diffeomorphism, this
corresponds to the construction of Grossman and Larson \emph{for the vector fields}:
\[ \mathbbm{E}_{\eta} = \sum_{i_{} = 1}^{\nu} (\mathbbm{D}_{\eta} .x_{i_{}})
   \partial_{x_i} \qquad  \]

This means that the construction of the morphism $\rho^<$ only depends on the
operators
\[ \rho^< (\bullet_{\eta}) = \sum_{i_{} = 1}^{\nu} (\rho (\eta) .x_{i_{}})
   \partial_{x_i}  \quad (\text{here, the bullet designates a one vertex tree}) \]
but the origin of $\rho$ (Vector field or diffeomorphism) reappears in the
relations between $\rho$ and $\rho^<$ :
\begin{itemizeminus}
  \item In the shuffle case (Vector fields), we have, for $\eta_1 \in H$,
  \[ \rho ((\eta_1)) =\mathbbm{B}_{\eta_1} = \rho^< (\bullet^{\eta_1}) \]
  \item In the Quasishuffle case (Diffeomorphisms), if $f \in \tmmathbf{G}$ is
  given by
  \[ f_i (x) = x_i \left( 1 + \sum_{\eta \in \Eta} a^i_{\eta} x^{\eta} \right)
  \]
  then
  \[ \Theta_f = \tmop{Id}_{\mathbbm{C} [[x]]} + \sum_{s \geq 1}
     \sum_{\tmscript{\begin{array}{l}
       (\eta_1, \ldots, \eta_s) \in \Eta^s\\
       1 \leqslant i_1, \ldots, i_s \leqslant \nu
     \end{array}}} \frac{1}{s!} a^{i_1}_{\eta_1} \ldots a^{i_s}_{\eta_s}
     x^{\eta_1 + \ldots + \eta_s} x_{i_1} \ldots x_{i_x} \partial_{x_{i_1}}
     \ldots \partial_{x_{i_s}} \]
  and
  \[ \rho ((\eta)) = \sum_{\tmscript{\begin{array}{l}
       (\eta_1, \ldots, \eta_s) \in \Eta^s\\
       \eta_1 + \ldots + \eta_s = \eta
     \end{array}}} \sum_{1 \leqslant i_1, \ldots, i_s \leqslant \nu}
     \frac{1}{s!} a^{i_1}_{\eta_1} \ldots a^{i_s}_{\eta_s} x^{\eta_1 + \ldots
     + \eta_s} x_{i_1} \ldots x_{i_x} \partial_{x_{i_1}} \ldots
     \partial_{x_{i_s}} \]
  but for $\eta \in \bar{H}$, one easily sees that
  \[ \begin{array}{ccc}
       \rho ((\eta)) & = & \ds \sum_{\tmscript{\begin{array}{c}
         F = \bullet^{\eta_1} \ldots \bullet^{\eta_s}\\
         \|F\| = \eta\\
         \eta_i \in H
       \end{array}}} \rho^< (\bullet^{\eta_1} \ldots \bullet^{\eta_s})
     \end{array} \]
\end{itemizeminus}

As in section \ref{s:G}, we have

\begin{theorem}
  the map
  \[ \begin{array}{ccccc}
       S_{\rho^<} & : & \mathcal{C} (\tmop{CK}_H, \mathbbm{C}) & \rightarrow &
       \tmmathbf{G}\\
       &  & \chi & \mapsto & \tmop{ev} \left( \sum_F \chi (F) \rho^< (F)
       \right)
     \end{array} \]
  defines an antimorphism of groups and $\Theta^{\chi} = \sum_F \chi (F)
  \rho^< (F)$ is the substitution automorphism associated to $S_{\rho}
  (\chi)$.
\end{theorem}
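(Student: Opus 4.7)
The plan is to verify the two assertions separately, in parallel with the shuffle case treated in Theorem~4, combining the Hopf morphism property of $\rho^<$ (just proved) with the character property of $\chi$.

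First, I would check that $\Theta^{\chi} \cdot (uv) = (\Theta^{\chi} u)(\Theta^{\chi} v)$ for arbitrary power series $u, v \in \mathbbm{C}[[x]]$. The computation mirrors the shuffle proof almost verbatim: I would write $\Theta^{\chi}(uv) = \sum_F \chi(F)\, \pi_{\mathbbm{C}[[x]]} \circ \Delta(\rho^<(F)) \cdot (u \otimes v)$, use that $\rho^<$ is a coalgebra morphism to substitute $\Delta \circ \rho^< = (\rho^< \otimes \rho^<) \circ \Delta_{\tmop{CK}_H^\circ}$, expand in the forest basis, and invoke the duality between the coproduct in $\tmop{CK}_H^\circ$ and the forest-concatenation product in $\tmop{CK}_H$ to identify the coefficient of $F_1 \otimes F_2$ in $\sum_F \chi(F)\, \Delta_{\tmop{CK}_H^\circ}(F)$ as $\chi(F_1 \cdot F_2)$. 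The character property of $\chi$ on $\tmop{CK}_H$ then yields $\chi(F_1)\chi(F_2)$, and the regrouped sum is precisely $(\Theta^{\chi} u)(\Theta^{\chi} v)$.

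Second, I would compute $\Theta^{\chi^1 \ast \chi^2}$ and compare with a composition of differential operators. By definition of convolution, $(\chi^1 \ast \chi^2)(G) = \sum_{(G)} \chi^1(G_{(1)}) \chi^2(G_{(2)})$ with $\Delta_{\tmop{CK}_H} G = \sum G_{(1)} \otimes G_{(2)}$, so the same duality as above rewrites $\sum_G (\chi^1 \ast \chi^2)(G)\, \rho^<(G)$ in the form $\sum_{F_1, F_2} \chi^1(F_1) \chi^2(F_2)\, \rho^<(F_1 \cdot_{\tmop{CK}_H^\circ} F_2)$. Invoking now the full Hopf morphism property \emdash\ $\rho^<$ is in particular an \emph{algebra} morphism from $\tmop{CK}_H^\circ$ to $\mathbbm{C}[x, \partial_x]$ \emdash\ converts this into $\Theta^{\chi^1} \cdot \Theta^{\chi^2}$, i.e.\ ordinary composition of differential operators. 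Applying the anti-isomorphism $\tmop{ev}$ of Proposition~1, which sends $\Theta_\varphi \Theta_\psi$ to $\psi \circ \varphi$, then yields $S_{\rho^<}(\chi^1 \ast \chi^2) = S_{\rho^<}(\chi^2) \circ S_{\rho^<}(\chi^1)$, establishing the antimorphism assertion.

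The main subtlety lies in the bookkeeping between $\tmop{CK}_H$ and $\tmop{CK}_H^\circ$: both Hopf algebras carry bases indexed by decorated forests, but their products and coproducts play very different roles. The entire argument rests on keeping straight that the coproduct on $\tmop{CK}_H^\circ$ (used in the first step for the substitution property) is dual to the forest-concatenation product on $\tmop{CK}_H$ (with respect to which $\chi$ is multiplicative), while the product on $\tmop{CK}_H^\circ$ (used in the second step for the group law) is dual to the admissible-cut coproduct on $\tmop{CK}_H$ (which defines convolution). Grading ensures that for each fixed total degree in $\mathbbm{C}[x, \partial_x]$ only finitely many forests contribute, so all formal infinite sums and reorderings are legitimate, and no genuine convergence issue arises at this purely algebraic stage.
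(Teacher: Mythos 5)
Your proposal is correct and is essentially the proof the paper intends: the theorem is stated with only the remark ``As in section 3'', and you have carried out exactly the adaptation of the shuffle-case argument (coalgebra morphism property of $\rho^<$ plus multiplicativity of $\chi$ for the substitution-automorphism part, algebra morphism property plus duality with the admissible-cut coproduct for the group law). You also correctly identify the one point where the adaptation is not verbatim \emdash\ $\rho^<$ is an algebra morphism rather than an antimorphism, so after applying the anti-isomorphism $\tmop{ev}$ the map $S_{\rho^<}$ comes out as an antimorphism of groups, matching the statement.
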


This is a presentation of Ecalle's arborification/coarborification apparatus,
within a framework of Hopf algebras.

As we will see below, these series have many advantages in
linearization problems :
\begin{itemizeminus}
  \item Modulo a restriction to a subalgebra of $\tmop{CK}_H$ (and of its
  graded dual), strong assumptions on the spectrum will become unnecessary.
  
  \item There is a very simple criterion on characters $\chi$ in $\mathcal{C}
  (\tmop{CK}_H, \mathbbm{C})$ that ensures the analyticity of $S_{\rho^<}
  (\chi)$.
\end{itemizeminus}
Moreover, the previous computations of characters on $\tmop{Sh}_{H}$ or
$\tmop{Qsh}_H^{}$ were not useless : in many cases their computation is
easier, thanks to the simplicity of the convolution product, and for example,
once such a character $\chi$ on $\tmop{Sh}_{H}$ or $\tmop{Qsh}_H$ is
given in closed--form expression, one can easily derive a closed--form form
expression for the character $\chi^<$ on $\tmop{CK}_H$ such that
\[ S_{\rho^<} (\chi^<) = S_{\rho} (\chi) \]

 \subsection{Arborification.}

For the deconcatenation coproduct on $\tmop{Sh}_{H}$ or
$\tmop{Qsh}_H$, if $L_+^{\eta} (\tmmathbf{\eta}) = \eta \tmmathbf{\eta}$ then
\[ \Delta \circ L_+^{\eta} = 1 \otimes L_+^{\eta} + (L_+^{\eta} \otimes
   \tmop{Id}) \circ \Delta \]
we thus have the cocycle property, and then, the morphism $\alpha$ such that

\[ \alpha \circ B^+_{\eta} = L_+^{\eta} \circ \alpha \]
is a coalgebra {\tmstrong{antimorphism}} from $\tmop{CK}_H$ to $\tmop{Sh}_{H}$ or
$\tmop{Qsh}_H$ (\cite{foissy1}).

{\color{black} It is the fact that $\tmop{CK}_H$ is an initial object in a
category of coalgebras, for a certain cohomology (dual to Hochschild cohomology
of algebras) that ensures the existence of the morphism $\alpha$, which is a
morphism of Hopf algebras. We shall not expand on this (as shown by Foissy,
the cohomology groups vanish in $\tmop{degree} \geqslant 2$), yet it is
satisfactory to have such a simple algebraic characterization of
arborification through a universal property of Connes-Kreimer's algebra, which
is an important object in its own right.

  We shall now see how to recover the same diffeomorphism using $\alpha$ : going back
  to our conjugacy equations, the change of coordinates, in both cases, is
  given by a substitution automorphism
  \[ \Theta = \sum_{\tmmathbf{\eta}} \chi (\tmmathbf{\eta}) \rho
     (\tmmathbf{\eta}) \]
  and to any such character $\chi$ we have associated an arborified character
  $\chi^< = \chi \circ \alpha$. We should try to use this new character on
  $\tmop{CK}_H$ to rearrange the above series and finally get some analyticity
  properties. To do so, let us use the new Hopf algebra morphism $\rho^<$ from
  $\tmop{CK}_H^{\circ}$ to $\mathbbm{C} [x, \partial_x]$:
  \[ \Theta = \sum_{\tmmathbf{\eta}} \chi (\tmmathbf{\eta}) \rho (\tmmathbf{\eta})
     = \sum_F \chi^< (F) \rho^< (F) \]
  But then
  \[ \begin{array}{ccc}
       \ds \sum_F \chi^< (F) \rho^< (F) & = & \ds  \sum_F \chi^{} (\alpha (F)) \rho^<
       (F)\\
       & = & \ds \sum_F \chi^{} \left( \sum_{\tmmathbf{\eta}} \langle
       \tmmathbf{\eta}, \alpha (F) \rangle \tmmathbf{\eta} \right) \rho^<
       (F)\\
       & = & \ds \sum_{\tmmathbf{\eta}, F} \chi^{} (\tmmathbf{\eta}) \langle
       \tmmathbf{\eta}, \alpha (F) \rangle \rho^< (F)\\
       & = & \ds \sum_{\tmmathbf{\eta}} \chi (\tmmathbf{\eta}) \sum_F \langle
       \tmmathbf{\eta}, \alpha (F) \rangle \rho^< (F)\\
       & = & \ds \sum_{\tmmathbf{\eta}} \chi (\tmmathbf{\eta}) \sum_F \langle
       \alpha^{\circ} (\tmmathbf{\eta}), F \rangle \rho^< (F)\\
       & = & \ds \sum_{\tmmathbf{\eta}} \chi (\tmmathbf{\eta}) \rho^<
       (\alpha^{\circ} (\tmmathbf{\eta}))
     \end{array} \]
  so it appears indeed highly desirable to have such morphisms as $\rho^<$ that fulfills the relation
  \[ \rho^< \circ \alpha^{\circ} = \rho \]
  The choice of $\rho^<$ is not unique but the map defined in section 5.2
  works and it is that particular choice which has been called {\cite{snag}}
  the natural (or homogeneous) coarborification and which is adapted to the
  analytic study of $F$.

{\color{black} \begin{theorem}
  We have
  \[ \rho^< \circ \alpha^{\circ} = \rho \]
\end{theorem}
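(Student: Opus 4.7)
The plan is to recognize that both sides of the claimed identity are anti-algebra morphisms from $\tmop{Conc}_H$ (respectively $\tmop{Conc}_{\bar H}$), equipped with concatenation, to $\mathbbm{C}[x,\partial_x]$ with operator composition, and then to reduce the verification to agreement on the single-letter generators $(\eta)$.

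For $\rho$, this anti-multiplicativity is exactly the comould property from Section~\ref{ss:groups}: $\rho((\eta_1,\dots,\eta_s))=\rho((\eta_s))\cdots\rho((\eta_1))$. For $\rho^<\circ\alpha^\circ$ the idea is to dualize. I would first check, using the defining relation $\alpha\circ B_\eta^+=L_+^\eta\circ\alpha$ together with the cohomological identities $\Delta\circ B_\eta^+=B_\eta^+\otimes 1+(\tmop{Id}\otimes B_\eta^+)\circ\Delta$ on $\tmop{CK}_H$ and $\Delta\circ L_+^\eta=1\otimes L_+^\eta+(L_+^\eta\otimes\tmop{Id})\circ\Delta$ on the shuffle or quasishuffle side, that $\alpha$ is a coalgebra antimorphism: the new element sits on opposite tensor factors in the two cocycle relations, and a short computation involving the twist map confirms the antimorphism property. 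Dualizing, $\alpha^\circ$ becomes an algebra antimorphism from the concatenation product on $\tmop{Sh}_H^\circ$ (resp.\ $\tmop{QSh}_{\bar H}^\circ$) to the Grossman--Larson product on $\tmop{CK}_H^\circ$ supplied by Lemma~\ref{Foissy}. Composition with the Hopf, hence algebra, morphism $\rho^<$ established in the preceding theorem then yields the sought anti-algebra morphism property of $\rho^<\circ\alpha^\circ$.

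Since $\tmop{Conc}_H$ is freely generated as a concatenation algebra by the one-letter words, the equality $\rho^<\circ\alpha^\circ=\rho$ will reduce to agreement on each $(\eta)$. In the shuffle case $\bullet_\eta$ is the only forest whose arborification contributes a length-one word, so $\alpha^\circ((\eta))=\bullet_\eta$, and the recursive definition of $\rho^<$ immediately gives $\rho^<(\bullet_\eta)=\sum_{i=1}^\nu(\rho((\eta)).x_i)\partial_{x_i}=\mathbbm{B}_\eta=\rho((\eta))$. In the quasishuffle case the forests whose arborification contains the one-letter word $(\eta)$ are precisely the unordered products of one-vertex trees $F=\bullet^{\eta_1}\cdots\bullet^{\eta_s}$ with $\eta_i\in H$ and $\eta_1+\cdots+\eta_s=\eta$, and a direct inspection of the quasishuffle of single letters shows that the ``merge-all'' term always carries coefficient one; applying $\rho^<$ termwise and regrouping will then reproduce the explicit formula for $\mathbbm{D}_\eta=\rho((\eta))$ displayed just before the theorem.

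The hard part will be the bookkeeping in this quasishuffle base case: one must reconcile the $1/(d_1!\cdots d_k!)$ symmetry factors hard-wired into the definition of $\rho^<$ on a forest with multiplicities $d_1,\dots,d_k$, with the $1/s!$ weights occurring in the ordered-tuple expansion of $\mathbbm{D}_\eta$. This boils down to a multinomial identity converting sums over ordered tuples with repetitions into sums over the corresponding unordered forests, so the difficulty is combinatorial rather than conceptual.
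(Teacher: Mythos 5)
Your proposal is correct and follows essentially the same route as the paper: both sides are algebra antimorphisms out of the free concatenation algebra (with $\rho^<\circ\alpha^\circ$ inheriting this from the coalgebra-antimorphism property of $\alpha$ dualized against Lemma~\ref{Foissy} and the Hopf-morphism property of $\rho^<$), so the identity reduces to the one-letter generators, where the shuffle case is immediate and the quasishuffle case is the matching of $\mathbbm{D}_\eta$ with the sum of $\rho^<$ over forests of one-vertex trees of total weight $\eta$. Your explicit flagging of the $1/(d_1!\cdots d_k!)$ versus $1/s!$ multinomial reconciliation is exactly the "one easily sees" step the paper leaves to the reader, and it checks out.
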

}
\begin{proof}
  $\rho$ and $\alpha^{\circ}$ are coalgebra morphisms and algebra
  antimorphisms and $\rho^<$ is a Hopf morphism, so $\rho^< \circ \alpha^{\circ}$
  and $\rho$ are coalgebra morphisms and algebra antimorphisms.
  
  In the shuffle case (Vector fields), since $\tmop{Sh}^{\circ}_{\Eta}$ is
  freely generated by the words of length 1, it is sufficient to check that
  both morphisms coincides on these words. But $\alpha^{\circ} ((\eta_1)) =
  \bullet^{\eta_1}$ thus
  \[ \rho ((\eta_1)) =\mathbbm{B}_{\eta_1} = \rho^< (\bullet^{\eta_1}) =
     \rho^< \circ \alpha^{\circ} ((\eta_1)) \]
  The same proof holds in the quasishuffle case : if $f \in \tmmathbf{G}$ is
  given by
  \[ f_i (x) = x_i \left( 1 + \sum_{\eta \in \Eta} a^i_{\eta} x^{\eta} \right)
  \]
  then
  \[ \Theta_f = \tmop{Id}_{\mathbbm{C} [[x]]} + \sum_{s \geq 1}
     \sum_{\tmscript{\begin{array}{l}
       (\eta_1, \ldots, \eta_s) \in \Eta^s\\
       1 \leqslant i_1, \ldots, i_s \leqslant \nu
     \end{array}}} \frac{1}{s!} a^{i_1}_{\eta_1} \ldots a^{i_s}_{\eta_s}
     x^{\eta_1 + \ldots + \eta_s} x_{i_1} \ldots x_{i_x} \partial_{x_{i_1}}
     \ldots \partial_{x_{i_s}} \]
  and
  \[ \rho ((\eta)) = \sum_{\tmscript{\begin{array}{l}
       (\eta_1, \ldots, \eta_s) \in \Eta^s\\
       \eta_1 + \ldots + \eta_s = \eta
     \end{array}}} \sum_{1 \leqslant i_1, \ldots, i_s \leqslant \nu}
     \frac{1}{s!} a^{i_1}_{\eta_1} \ldots a^{i_s}_{\eta_s} x^{\eta_1 + \ldots
     + \eta_s} x_{i_1} \ldots x_{i_x} \partial_{x_{i_1}} \ldots
     \partial_{x_{i_s}} \]
  but for $\eta \in \bar{H}$, one easily sees that
  \[ \begin{array}{ccc}
       \rho ((\eta)) & = & \ds \sum_{\tmscript{\begin{array}{c}
         F = \bullet^{\eta_1} \ldots \bullet^{\eta_s}\\
         \|F\| = \eta\\
         \eta_i \in H
       \end{array}}} \rho^< (\bullet^{\eta_1} \ldots \bullet^{\eta_s})\\
       & = & \rho^< (\alpha^{\circ} ((\eta))
     \end{array} \]
  and this terminates the proof.

\end{proof}}

\begin{remark}
  The mechanism of arborification of moulds has in effect been independently
  rediscovered by Ander Murua in {\cite{murua}}, involving Connes-Kreimer Hopf
  algebra, for efficient calculations involving Lie series in problems of
  control theory; in that paper, the author is then also lead to
  coarborification by considering the graded duals, and going thus to the
  Grossman--Larson algebra.
  
  In the reverse direction, Wenhua Zhao (see {\cite{zhao1}}, {\cite{zhao2}},
  {\cite{zhao3}}) has for his part rediscovered the constructions of
  coarborification and then obtained in effect the mechanisms of
  arborification by dualizing and going to $\tmop{CK}$. Notably, Zhao's
  results concern in fact both plain and contracting arborification.
  
  More recently, the universal property of $\tmop{CK}$ has also been used (in
  the non decorated case) in the same way as in our presentation, for a
  factorisation of characters of the quasishuffle algebra in \cite{cal-ef-man}.
  
  It must be stressed, however, that the crucial properties for the analyst
  come {\tmem{after{\tmem{}}}} these general constructions: namely the
  existence of closed--form expressions for the aborified moulds, which make
  it possible to obtain the necessary estimates, as we shall see below.
  
  A very striking instance, though, where an independant approach has exactly
  lead to arborification, once translated in terms of characters of the
  relevant Hopf algebras, and includes for the applications a crucial
  closed--form is \cite{foissy_unter}. Finally, in several very recent works in
  the algebraic theory of non--linear control (see \cite{lund-mk} and the
  references therein) some particular characters of the same class of Hopf
  algebras we are involved with in the present work show up, which translate
  into moulds of constant use in Ecalle's papers.
\end{remark}

\subsection{Some examples.}

\subsubsection{The shuffle case.}

For the tree
\[ t = \BeunedeuxSetroisequatre
\]
we get
\[ \alpha (t) = (\eta_1 \eta_2 \eta_3 \eta_4) + (\eta_1 \eta_3 \eta_2 \eta_4)
   + (\eta_1 \eta_3 \eta_4 \eta_2) \]
under the strong assumption on the spectrum (the ${\lambda}_i$ are independant over the integers), for
the character $\xi$ given by
\[ \xi (\eta_1, \ldots, \eta_s) = \frac{1}{\langle \lambda, \eta_1 + \ldots +
   \eta_s \rangle \langle \lambda, \eta_2 + \ldots + \eta_s \rangle \ldots
   \langle \lambda, \eta_s \rangle} \]
A simple computation yields :
\[ \xi^< (t) = \xi (\alpha (t)) = \frac{1}{\langle \lambda, \eta_1 + \eta_2 +
   \eta_3 + \eta_4 \rangle \langle \lambda, \eta_2 \rangle \langle \lambda,
   \eta_3 + \eta_4 \rangle \langle \lambda, \eta_4 \rangle} \]
For this character, even if the evaluation of $\xi^<$ on a tree involves
evaluation of $\xi$ on many sequences, there exists finally a surprisingly
simple formula for $\xi^<$ :

\begin{proposition} \label{prop:linarbo}
  Let $f$ be a tree with $s$ vertices decorated by $\eta_1, \ldots, \eta_s$.
  For $1 \leqslant i \leqslant s$ if $t_i$ is the subtree of $f$ whose root is
  labelled by $\eta_i$, then
  \[ \xi^< (f) = \prod_{i = 1}^s \frac{1}{\langle \lambda, \|t_i \| \rangle}
  \]
\end{proposition}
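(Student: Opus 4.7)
The plan is to proceed by induction on the number of vertices of $f$. Three ingredients suffice: the defining cocycle relation $\alpha \circ B_\eta^+ = L_+^\eta \circ \alpha$, the fact that $\alpha$ is a Hopf algebra morphism from $\tmop{CK}_H$ to $\tmop{Sh}_H$ (so the forest product is carried to the shuffle product), and the character property of $\xi$ on $\tmop{Sh}_H$ established in the preceding proposition.

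For the base case $f = \bullet^{\eta_1}$ one has $\alpha(f) = (\eta_1)$ and $\xi^<(f) = 1/\langle\lambda,\eta_1\rangle$, which matches the claimed formula since the only subtree of $f$ is $f$ itself. For the inductive step, write $f = B_{\eta_r}^+(s_1\cdots s_k)$, where $s_1,\ldots,s_k$ are the branches hanging from the root $r$. Combining the two defining properties of $\alpha$ yields
\[ \alpha(f) \;=\; L_+^{\eta_r}\bigl(\alpha(s_1)\cdots\alpha(s_k)\bigr), \]
the product being the shuffle product in $\tmop{Sh}_H$. The explicit formula for $\xi$ gives the recursion $\xi(L_+^{\eta}(\tmmathbf{\eta})) = \langle\lambda, \eta + \|\tmmathbf{\eta}\|\rangle^{-1}\xi(\tmmathbf{\eta})$, and every word appearing in $\alpha(s_1)\cdots\alpha(s_k)$ has the same total weight $\|s_1\|+\cdots+\|s_k\| = \|f\|-\eta_r$. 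Hence the prefactor pulls out as $1/\langle\lambda,\|f\|\rangle$, and the character property of $\xi$ factorises the remainder as $\prod_{i=1}^k \xi(\alpha(s_i)) = \prod_i \xi^<(s_i)$.

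Applying the induction hypothesis to each $s_i$ gives $\xi^<(s_i) = \prod_v \langle\lambda,\|t_v\|\rangle^{-1}$ with the product running over vertices $v$ of $s_i$, and the subtree of $s_i$ rooted at $v$ obviously coincides with the subtree of $f$ rooted at $v$. Combined with the factor $1/\langle\lambda,\|f\|\rangle$ coming from the root $r$ (whose associated subtree is all of $f$) and the observation that the vertex set of $f$ is the disjoint union of $\{r\}$ with the vertex sets of the branches $s_1,\ldots,s_k$, this assembles into the advertised product formula. I expect no genuine obstacle: the only points requiring vigilance are, first, that it is precisely the Hopf-morphism property of $\alpha$ which turns the concatenation of forests into the shuffle and thereby unlocks the character multiplicativity of $\xi$, and second, the bookkeeping identifying subtrees inside each branch with subtrees of $f$.
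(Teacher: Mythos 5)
Your proof is correct and follows essentially the same route as the paper's: induction on the number of vertices, using the cocycle relation $\alpha \circ B^+_{\eta} = L^{\eta}_+ \circ \alpha$, the recursion $\xi(L^{\eta}_+(\tmmathbf{\eta})) = \langle \lambda, \eta + \|\tmmathbf{\eta}\|\rangle^{-1}\,\xi(\tmmathbf{\eta})$ (valid since all words in $\alpha(s_1)\cdots\alpha(s_k)$ share the same weight), and the multiplicativity of $\alpha$ and $\xi$. The only difference is organizational: the paper alternates two recursive cases (a forest as a product of trees, then a tree as $B^+_{\eta}$ of a forest), whereas you strip the root and factorize over the branches in a single inductive step, which is a harmless repackaging.
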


The reader can check this formula on the previous example where
\[ t_1 = \Ceunedeuxetrois ,\quad t_2 = \edeux ,\quad t_3 = \Setroisequatre,\quad t_4 = \equatre \]
\begin{proof}
  This result can be proved recursively on the number $s$ of vertices (i.e.
  the size of the forest). For forests of size $1$, this formula is obvious.
  
  If $f$ is a forest of size $s \geqslant 2$ with at least two trees : $f =
  t_1 \ldots t_n$ ($n \geqslant 2$), then
  \[ \xi^< (f) = \xi^< (t_1) \ldots \xi^< (t_n) \]
  but the size of each tree is less than $s$ and we get by recursion the right
  formula.
  
  If $t$ is a tree \ of size $s \geqslant 2$, then $t = B^+_{\eta} (f)$ and
  \[ \xi^< (t) = \xi (\alpha (B^+_{\eta} (f)) = \xi (L_+^{\eta} (\alpha (f)))
  \]
  but, for any sequence $\tmmathbf{\eta}$,
  \[ \xi (L_+^{\eta} (\tmmathbf{\eta})) = \frac{1}{\langle \lambda, \eta
     +\|\tmmathbf{\eta}\| \rangle} \xi (\tmmathbf{\eta}) \]
  thus
  \[ \xi^< (t) = \xi (L_+^{\eta} (\alpha (f))) = \frac{1}{\langle \lambda,
     \|t\| \rangle} \xi (\alpha (f)) = \frac{1}{\langle \lambda, \|t\|
     \rangle} \xi^< (f) \]
  and, once again, we get recursively the right formula.
\end{proof}

\subsubsection{The quasishuffle case.}

For the tree
\[ t = \BeunedeuxSetroisequatre
\]
we get
\[ \alpha (t) = (\eta_1, \eta_2, \eta_3, \eta_4) + (\eta_1, \eta_3, \eta_2,
   \eta_4) + (\eta_1, \eta_3, \eta_4, \eta_2) + (\eta_{1,} \eta_2 + \eta_3,
   \eta_4) + (\eta_1, \eta_3, \eta_2 + \eta_4) \]
under the strong assumption on the spectrum, for
the character $\chi$ given by
\[ \chi (\eta_1, \ldots, \eta_s) = \frac{1}{(l^{\eta_1 + \ldots + \eta_s} -
   1) (l^{\eta_2 + \ldots + \eta_s} - 1) \ldots (l^{\eta_s} - 1)} \]
A simple computation yields :
\[ \chi^< (t) = \chi (\alpha (t)) = \frac{1}{(l^{\eta_1 + \eta_2 + \eta_3 +
   \eta_4} - 1) (l^{\eta_2} - 1) (l^{\eta_3 + \eta_4} - 1) (l^{\eta_4} - 1)}
\]
and the same proof as before gives

\begin{proposition}
  Let $f$ be a tree with $s$ vertices decorated by $\eta_1, \ldots, \eta_s$.
  For $1 \leqslant i \leqslant s$ if $t_i$ is the subtree of $f$ whose root is
  labelled by $\eta_i$, then
  \[ \chi^< (f) = \prod_{i = 1}^s \frac{1}{(l^{\|t_i \|} - 1)} \]
\end{proposition}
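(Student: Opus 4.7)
The plan is to imitate verbatim the proof of Proposition \ref{prop:linarbo} (the shuffle/vector field case), replacing the linear denominators $\langle \lambda, \eta\rangle$ by the multiplicative denominators $l^{\eta}-1$. The key algebraic ingredient is exactly the same cocycle--type relation between $\alpha$ and the creation operators $L^\eta_+$, namely $\alpha \circ B^+_\eta = L^\eta_+ \circ \alpha$, combined with the recursive formula established earlier for $\chi$ on the quasi--shuffle Hopf algebra. I would proceed by induction on $s$, the number of vertices of $f$.

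For $s=1$, the forest is a single vertex $\bullet_\eta$, $\alpha(\bullet_\eta) = (\eta)$, and $\chi^<(\bullet_\eta) = \chi((\eta)) = 1/(l^\eta-1)$, which matches the claim. For the inductive step, I would split into two cases. If $f = t_1 \ldots t_n$ is a forest with $n\geq 2$ trees, then since $\chi^<$ is a character on $\tmop{CK}_{\bar H}$, i.e.\ a separative mould, one has $\chi^<(f) = \chi^<(t_1)\cdots \chi^<(t_n)$, and the inductive hypothesis applied to each $t_j$ (which has strictly fewer vertices) immediately yields the product formula over all subtrees of $f$.

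The more substantial case is $t = B^+_\eta(f)$. Here $\alpha(t) = L^\eta_+(\alpha(f))$, so $\chi^<(t) = \chi(L^\eta_+(\alpha(f)))$. The crucial observation, that I would isolate as a small lemma if necessary, is that the \emph{weight} $\|\boldsymbol{\eta}\|$ is preserved both by shuffle interleavings and by quasi--shuffle contractions (since contraction merely adds letters in $\bar H$). Consequently every sequence $\boldsymbol{\eta}$ appearing in $\alpha(f)$ satisfies $\|\boldsymbol{\eta}\| = \|f\|$. Applying the recursion derived in the text, namely
\[
l^{\eta + \|\boldsymbol{\eta}\|}\,\chi(L^\eta_+(\boldsymbol{\eta})) \;=\; \xi(\eta)\,\chi(\boldsymbol{\eta}) + \chi(L^\eta_+(\boldsymbol{\eta})),
\]
together with $\xi(\eta)=1$, gives $\chi(L^\eta_+(\boldsymbol{\eta})) = \frac{1}{l^{\eta+\|f\|}-1}\chi(\boldsymbol{\eta}) = \frac{1}{l^{\|t\|}-1}\chi(\boldsymbol{\eta})$ term by term. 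Summing over the sequences in $\alpha(f)$ then yields $\chi^<(t) = \frac{1}{l^{\|t\|}-1}\chi^<(f)$, and the inductive hypothesis closes the argument, since the subtrees of $t$ are exactly $t$ itself together with the subtrees of $f$.

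The only potential obstacle is making sure the weight invariance under quasi--shuffling is properly accounted for when factoring out $1/(l^{\|t\|}-1)$ uniformly from every term of $\chi(\alpha(f))$; this is what allows the argument to be structurally identical to the shuffle case despite $\alpha$ producing contracted sequences in addition to plain interleavings, as illustrated by the example $\alpha(t) = (\eta_1,\eta_2,\eta_3,\eta_4)+(\eta_1,\eta_3,\eta_2,\eta_4)+(\eta_1,\eta_3,\eta_4,\eta_2)+(\eta_1,\eta_2+\eta_3,\eta_4)+(\eta_1,\eta_3,\eta_2+\eta_4)$ preceding the statement.
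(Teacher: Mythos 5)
Your proof is correct and follows essentially the same route as the paper, which simply invokes ``the same proof as before'' (i.e.\ the induction of Proposition~\ref{prop:linarbo}, split into the separativity step for forests and the $B^+_\eta$/$L^\eta_+$ intertwining step for trees). You have in addition correctly identified and justified the one point the paper leaves implicit, namely that quasishuffle contractions preserve the total weight $\|\tmmathbf{\eta}\|=\|f\|$, so the factor $1/(l^{\|t\|}-1)$ comes out uniformly from every term of $\chi(\alpha(f))$.
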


Once again the formula is surprisingly simple and as we shall see in the
following section, if we have ``geometric'' estimates on such an arborified
character we will prove the analyticity of the associated diffeomorphism.

But we still have to work with strong assumptions on the spectrum. We will
circumvent this difficulty using the following remarks :
\begin{enumeratenumeric}
  \item One can obtain the above formula without arborification by translating
  directly the linearization equations as character equations on
  $\tmop{CK}_H$.
  
  \item We will then prove that, in order to define the corresponding
  diffeomorphism, it is sufficient to compute a character on a sub--Hopf
  algebra of $\tmop{CK}_H$ were the sought character is well-defined under the
  {\tmem{weak}} assumption on the spectrum. \ 
\end{enumeratenumeric}
\section{Back to linearization}\label{s:dioph}

\subsection{Equations for characters of $\tmop{CK}_H$.}

As in section \ref{s:G}, if
\[ X = X^{\tmop{lin}} + \sum_{\eta \in H} \mathbbm{B}_{\eta} = X^{\tmop{lin}}
   + P \]
with $X^{\tmop{lin}} = \sum_{1 \leqslant i \leqslant \nu} \lambda_i x_i
\partial_{x_i}$, the vector field $P$ is given by the infinitesimal character
$u$ on $\tmop{CK}_H$ :
\[ u (f) = \left\{ \begin{array}{ll}
     1 & \tmop{if} \hspace{1em} f = \bullet_{\eta}\\
     0 & \tmop{otherwise}
   \end{array} \right. \]
That is to say :
\[ X = X^{\tmop{lin}} + \sum_{\eta \in H} \mathbbm{B}_{\eta} = X^{\tmop{lin}}
   + \sum_{f \in \mathcal{F}_H} u (f) \rho^< (f) \]
The diffeomorphism $\varphi$ that linearizes $X$ ($X^{\tmop{lin}} .F_{\varphi}
= F_{\varphi} .X^{}$) can be obtained as $\varphi = S_{\rho^<} (\xi)$ where
$\xi$ is a character on $\tmop{CK}_H$ such that
\[ \nabla \xi = \xi \ast u \]
where
\[ (\nabla \chi) (f) = \langle \lambda, \|f\| \rangle \chi (f) \]
It is then easy to check directly on this equation that if $\langle \lambda,
\eta \rangle \neq 0$ for any $\eta$ in $\bar{H}$, this character is uniquely
defined and is given by proposition {\bf 8}.

On the same way, for a diffeomorphism $f^{\tmop{lin}} \circ f$ where
\[ F_f = \tmop{Id} + \sum_{\eta \in \bar{H}} \mathbbm{D}_{\eta} \]
the character $\xi$ on $\tmop{CK}_H$ given by
\[ \xi (f) = \left\{ \begin{array}{ll}
     1 & \tmop{if} \hspace{1em} f = \bullet_{\eta_1} \ldots \bullet_{\eta_s}\\
     0 & \tmop{otherwise}
   \end{array} \right. \]
is such that
\[ F_f = \sum_{f \in \mathcal{F}_H} \xi (f) \rho^< (f) \]
and if $\chi$ is a character such that
\[ \chi \circ \sigma = \chi \ast \xi \hspace{1em} (\sigma (f) = l^{\|f\|} f)
\]
then $\varphi = S_{\rho^<} (\chi)$ is such that
\[ f^{\tmop{lin}} \circ f \circ \varphi = \varphi \circ f^{\tmop{lin}} \]
Once again, if, for any $\eta \in \bar{H}$, $l^{\eta} \neq 1$, then $\chi$
is well-defined and is given by proposition {\bf 9}.

We still have the strong condition because, in order to compute such
characters on a forest $f$, one has to divide by \ $\langle \lambda, \|f\|
\rangle$ or $l^{\|f\|} - 1$ and $\|f\|$ runs over $\bar{H}$. But, as we shall
see, when considering a substitution automorphism
\[ F = \sum_{f \in \mathcal{F}_H} \chi (f) \rho^< (f) \]
there are many forests $f$ such that $\rho^< (f) = 0$. Omitting these terms in
the series defining $F$, one can consider that $f$ runs over a subset
$\mathcal{F}^+_H$ of $\mathcal{F}_H$ which is the linear basis of a sub-Hopf
algebra $\tmop{CK}^+_H$ of $\tmop{CK}^{}_H$. We will thus be able to consider
the previous character equations on $\tmop{CK}^+_H$ and there will exist a
unique solution as soon as $\langle \lambda, \eta \rangle \neq 0$ or
$l^{\eta} - 1 \neq 0$ for all $\eta$ in $H$.

\subsection{The non-resonance condition and the subalgebras of $\tmop{CK}_H$.}

\begin{definition}
  Let $\tmop{CK}^+_H$ be the subspace of $\tmop{CK}_H$ whose algebraic basis
  is given by the trees $T$ such that for any admissible cut $c$ of T where
  $(R^c (T), P^c (T)) = T_1, \ldots, T_s$, $\|T_i \|$ is in $H$ ($1 \leqslant
  i \leqslant s$). We note this set of trees $\mathcal{T}^+_H$ and the set of
  forests of such trees $\mathcal{F}^+_H$.
\end{definition}

It is readily checked that $\tmop{CK}^+_H$ is a sub--Hopf algebra of
$\tmop{CK}_H$. But one can also prove the following :

\begin{theorem}
  \label{th:restrict}If a forest F in $\mathcal{F}_H$ does not belong to
  $\mathcal{F}^+_H$, then
  \[ \rho^< (F) = 0 \]
\end{theorem}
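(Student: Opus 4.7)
The plan is to prove the theorem by fully unfolding the recursive definition of $\rho^<$ into a sum over ``indexings'' of the vertices of the forest, and then to recognize that the conditions for a term to survive telescope precisely into the defining conditions of $\mathcal{F}^+_H$. More concretely, for a single tree $T$ with vertices labelled by elements of $H$, iterating the recursion gives
\[ \rho^<(T)\cdot x_j \;=\; x^{\|T\|+e_j}\sum_{\mathbf{i}}\; C(\mathbf{i})\prod_{v}b^{\mathbf{i}(v)}_{\eta_v}, \]
where the sum runs over maps $\mathbf{i}$ from the vertices of $T$ to $\{1,\ldots,\nu\}$ with $\mathbf{i}(\mathrm{root})=j$, and $C(\mathbf{i})$ collects the falling-factorial factors coming, at each vertex $v$ with children $v_1,\ldots,v_k$, from $\partial_{x_{\mathbf{i}(v_1)}}\cdots\partial_{x_{\mathbf{i}(v_k)}}x^{\eta_v+e_{\mathbf{i}(v)}}$. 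Thus an indexing contributes nontrivially exactly when at every vertex $v$ one has the single ``local'' condition
\[ \eta_v+e_{\mathbf{i}(v)}-\sum_{l}e_{\mathbf{i}(v_l)}\in\mathbb{N}^{\nu}, \]
which subsumes both $\eta_v\in H_{\mathbf{i}(v)}$ (so that $b^{\mathbf{i}(v)}_{\eta_v}\neq 0$) and the non-vanishing of the derivative factor at $v$.

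The key step is a telescoping argument. Summing the local condition over all vertices $v$ in the subtree $T^{(v_0)}$ rooted at any $v_0$, the inner $e_{\mathbf{i}(v)}$ terms cancel in pairs and one is left with $\|T^{(v_0)}\|+e_{\mathbf{i}(v_0)}\in\mathbb{N}^{\nu}$. Summing instead over the vertices of the residual $R^c(T)$ of any admissible cut $c$, the same telescoping leaves the inequality $\|R^c(T)\|+e_j-\sum_{v\in c}e_{\mathbf{i}(v)}\in\mathbb{N}^{\nu}$, which in particular implies $\|R^c(T)\|+e_j\in\mathbb{N}^{\nu}$. Since every $\eta\in H$ has $|\eta|\geq 1$, the total weights of $T^{(v_0)}$ and of $R^c(T)$ (when nonempty) are automatically $\geq 1$, so these non-negativity statements are exactly equivalent to $\|T^{(v_0)}\|\in H$ and $\|R^c(T)\|\in H$.

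Combining the two telescoping identities for a single admissible indexing, every piece of every admissible cut of $T$ (be it a subtree rooted at a vertex of $c$ or the residual $R^c(T)$) would have norm in $H$, meaning $T\in\mathcal{T}^+_H$. By contraposition, $T\notin\mathcal{T}^+_H$ rules out any admissible indexing for any choice of $j$; hence every term in the expansion of $\rho^<(T)\cdot x_j$ is identically zero, and so $\rho^<(T)=0$. The extension to forests $F=T_1\cdots T_k$ with $k\geq 2$ is immediate from the product formula of Definition~3: if $F\notin\mathcal{F}^+_H$, some $T_\ell\notin\mathcal{T}^+_H$, so by the single-tree case $\rho^<(T_\ell)=0$, and every summand of $\rho^<(F)$ carries the vanishing factor $\rho^<(T_\ell)\cdot x_{i_\ell}$.

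The main delicate point I anticipate is the bookkeeping in the residual telescoping: one has to check that the ``boundary'' contributions $\sum_{v\in c}e_{\mathbf{i}(v)}$ from children of $R^c(T)$ that lie in the cut $c$ appear with the correct sign and do not spoil the non-negativity — as is the case, since they are themselves non-negative, and merely enforce a stronger (and so harmless) condition on $\|R^c(T)\|+e_j$. Apart from this, the argument is essentially combinatorial and does not require induction on the height of $T$; the whole dependence on the recursion is absorbed into the single ``local'' vertex condition.
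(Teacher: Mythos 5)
Your proof is correct, but it follows a genuinely different route from the paper's. The paper does not expand $\rho^<$ termwise; instead it records only that $\rho^<(F)\cdot x_{i_1}\cdots$ has the form $P_F^{i_1,\ldots,i_s}(u)\,x^{\|F\|+e_{i_1}+\cdots+e_{i_s}}$ with $P_F^{i_1,\ldots,i_s}\in\mathbb{Q}^+[u]$, kills the case $\|T\|\notin H$ by observing that $x^{\|T\|+e_i}$ is not a polynomial, and then treats a general bad cut $c$ by invoking the algebra (anti)morphism property of $\rho^<$ with respect to the Grossman--Larson product: $\rho^<(P^c(T))\cdot\rho^<(R^c(T))=\rho^<(\pi(P^c(T)\otimes R^c(T)))$ vanishes, $\pi(P^c(T)\otimes R^c(T))=cT+Q$ with $c\in\mathbb{N}^*$ and $Q$ a nonnegative combination of forests, and the positivity of the coefficients of $cP_T^i+Q^i$ forces $P_T^i=0$. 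Your argument replaces this structural step by a complete unfolding of the recursion into vertex-indexings and a telescoping of the local admissibility conditions over subtrees and residuals; this is more elementary (it never uses that $\rho^<$ respects the dual product), it is stronger in that every individual term vanishes rather than merely the sum, and it makes visible why the cut condition defining $\mathcal{T}^+_H$ is exactly the right one. The price is the bookkeeping of the expansion, which you handle correctly: the exponent telescopes to $\|T\|+e_j$, the boundary terms $\sum_{v\in c}e_{\mathbf{i}(v)}$ enter with the right sign, and the case distinction between a vanishing coefficient $b^{\mathbf{i}(v)}_{\eta_v}$ (when $\eta_v+e_{\mathbf{i}(v)}\notin\mathbb{N}^\nu$) and a vanishing falling factorial is sound. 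One nitpick: your ``contributes nontrivially exactly when'' should be ``only if'' --- the local conditions are necessary but not sufficient for a nonzero term, since the coefficients of the particular field may vanish for other reasons --- but only the necessary direction is used, so the proof stands.
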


\begin{proof}
  Starting with with a diffeomorphism or a vector field, it is clear that for
  $\eta \in H$, the image of the one node tree, decorated by $\eta$, we have:
  \[ \rho^< (\eta) = \sum_{i = 1}^{\nu} u^i_{\eta} x^{\eta + e_i}
     \partial_{x_i} \]
  where $u^i_{\eta} \in \mathbbm{C}$. For any forest $F = T_1 \ldots T_s$ in \
  $\mathcal{F}_H$, $\rho^< (F)$ is an endomorphism of $\mathbbm{C} [x]$ such
  that
  \[ \rho^< (T_1 \ldots T_s) = \sum_{1 \leqslant i_1, \ldots, i_s \leqslant
     \nu} P_F^{i_1, \ldots i_s} (u) x^{\|F\| + e_{i_1} + \ldots + e_{i_s}}
     \partial_{x_{i_1}} \ldots \partial_{x_{i_s}} \]
  where the coefficients $P_F^{i_1, \ldots i_s} (u)$ are polynomials in the
  variables $u = \{u^i_{\eta} \}$ with coefficients in $\mathbbm{Q}^+$
  ($P_F^{i_1, \ldots i_s} (u) \in \mathbbm{Q}^+ [u]$).
  
  Let us first consider a tree $T$ in \ $\mathcal{T}_H$ such that $\|T\|
  \not\in H$. This means that, for $1 \leqslant i \leqslant \nu$, $\|T\| +
  e_i \not\in \mathbbm{N}^{\nu}$. But
  \[ \rho^< (T) = \sum_{i = 1}^{\nu} P_T^{i_{}} (u) x^{\|T\| + e_{i_{}}}
     \partial_{x_{i_{}}} \]
  with
  \[ P_T^{i_{}} (u) x^{\|T\| + e_{i_{}}} = \rho^< (T) .x_i \in \mathbbm{C}
     [x] \] 
  and, since $x^{\|T\| + e_{i_{}}}$ is not in $\mathbbm{C} [x]$, for $1
  \leqslant i \leqslant \nu$, $P_T^{i_{}} (u) = 0$ and $\rho^< (T) = 0$. Now,
  from the recursive definition of $\rho^<$, if $F = T_1 \ldots T_s$ with at
  least one tree $T_{i_0}$ such that $\|T_{i_0} \| \not\in H$, then
  \[ \rho^< (T_1 \ldots T_s) = 0 \]
  Now let $T$ be a tree such that there exists an admissible cut $c$ of T
  where $(R^c (T), P^c (T)) = (T_0, T_1 \ldots T_s)$ with at least one $\|T_i
  \| \not\in H$ ($T \in \mathcal{T}_H /\mathcal{T}^+_H$). From the previous
  property one can deduce that either $\rho^< (T_0) = 0$ or $\rho^< (T_1
  \ldots T_s) = 0$ thus,
  \[ \rho^< (T_1 \ldots T_s) . \rho^< (T_0) = 0 = \rho^< \circ \pi (P^c (T)
     \otimes R^c (T)) \]
  where $\pi$ is the the product in $\tmop{CK}^{\circ}_H$, dual to the
  coproduct of $\tmop{CK}_H$. But thanks to the definition of this coproduct
  \[ \pi (P^c (T) \otimes R^c (T)) = c_{} T + Q \]
  where $c \in \mathbbm{N}^{\ast}$ and $Q$ is a combination of forests with
  coefficients in $\mathbbm{N}$. Now
  \[ \rho^< (T_1 \ldots T_s) . \rho^< (T_0) .x_i = 0 = c P_T^{i_{}} (u)
     x^{\|T\| + e_i} + \rho^< (Q) .x_i = (c P_T^{i_{}} (u) + Q^i (u)) x^{\|T\|
     + e_i} \]
  this means that the polynomial $c P_T^{i_{}} (u) + Q^i (u)$ is zero but,
  since it is a linear combination (with positive coefficients) of polynomials
  in $\mathbbm{Q}^+ [[u]]$,
  \[ P_T^{i_{}} (u) = Q^i (u) = 0 \]
  and then $\rho^< (T) = 0$. Using once again the recursive definition of
  $\rho^<$, we obtain that if $F \in \mathcal{F}_H /\mathcal{F}_H^+$, $\rho^<
  (F) = 0$.
\end{proof}

This means that, if $p$ is the projection \ of $\tmop{CK}_H$ on
$\tmop{CK}^+_H$ or of $\tmop{CK}^{\circ}_H$ on $\tmop{CK}^{+ \circ}_H$ defined
by
\[ \forall F \in \mathcal{F}_H, \hspace{1em} p (F) = \left\{
   \begin{array}{lll}
     0 & \tmop{if} & F \not\in \mathcal{F}^+_H\\
     F & \tmop{if} & F \in \mathcal{F}^+_H
   \end{array} \right. \]
then $\rho^< \circ p$ is still a Hopf morphism. Moreover, for any character on
$\chi$ on \ $\tmop{CK}_H$ or $\tmop{CK}_{\bar{H}}$, $\chi \circ p$ is a
character on $\tmop{CK}^+_H$ and
\[ \sum_F \chi (F) \rho^< (F) = \sum_F \chi (p (F)) \rho^< (p (F)) = \sum_{F
   \in \tmop{CK}^+_H} \chi (F) \rho^< (F) \]
In other words, in the linearization equation, one can look for a substitution
morphism given by a character on $\tmop{CK}^+_H$ and this one is well-defined
as soon as we have the {\tmem{weak}} non--resonance condition.

Thus, this Hopf algebra $\tmop{CK}^+_H$, which does not appear in the
literature, is the relevant object one has to use, in order to recover the
usual results on formal linearization :
\begin{enumeratenumeric}
  \item It works with the classical conditions on the spectrum; no extra
  assumption is needed.
  
  \item The diffeomorphism is expressed by a character which is given without
  ambiguity.
\end{enumeratenumeric}
It remains to prove that $\tmop{CK}^+_H$ is also extremely well-suited to
consider the analyticity of such a diffeomorphism. In other words, the Hopf
algebra $\tmop{CK}^+_H$ is the right algebra to deal with questions of
convergence in linearization problems (and in fact also in more general
normalization problems, in situations involving resonances).

\subsection{Majorant series and analyticity.}

Using majorant series, it is easy to see that
\[ \tmmathbf{G}_{\tmop{ana}} = \{\varphi = (\varphi_1, \ldots, \varphi_{\nu})
   \in \tmmathbf{G} \hspace{1em} ; \hspace{1em} \varphi_i (x) \in
   \mathbbm{C}\{x\}\} \]
is a subgroup of $\tmmathbf{G}$ and this still holds for many subsets of
diffeomorphisms whose coefficients satisfy some particular estimates (see
\cite{menous_birkh}).

\begin{theorem}
  Let $B = \{B_{\eta} \in \mathbbm{R}^+, \hspace{1em} \eta \in H\}$ be a set
  of submultiplicative estimates : for all $\eta, \eta_1, \eta_2$ in $H$ such
  that $\eta = \eta_1 + \eta_2$, $B_{\eta_1} B_{\eta_2} \leqslant B_{\eta} =
  B_{\eta_1 + \eta_2}$. Let $\tmmathbf{G}_B$ be the subset of \
  $\tmmathbf{G}_{}$ of diffeomorphisms $\varphi$ such that there exists $A >
  0$ and
  \[ \forall 1 \leqslant i \leqslant \nu, \forall \eta \in H_i, \hspace{1em} |
     \varphi^i_{\eta} | \leqslant B_{\eta} A^{| \eta |} \]
  Then $\tmmathbf{G}_B$ is a subgroup of $\tmmathbf{G}$.
\end{theorem}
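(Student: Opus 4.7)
The plan is to verify that $\tmmathbf{G}_B$ contains the identity (trivial, since all its nonlinear coefficients vanish) and is closed under composition and inversion. Associativity is inherited from $\tmmathbf{G}$.

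For composition of $\varphi, \psi \in \tmmathbf{G}_B$ (with respective constants $A, A'$), I apply the Fa\`a di Bruno formula of Proposition 2 to $\phi = \varphi \circ \psi$, take absolute values termwise, and substitute the hypothesis bounds. Since all factors $\frac{1}{(s-1)!} P^{\eta_1+e_i}_{i_2, \ldots, i_s}$ are nonnegative, and each decomposition $(\eta_1, \ldots, \eta_s)$ satisfies $\eta_1 + \ldots + \eta_s = \eta \in H_i \subset H$, iterated submultiplicativity $B_{\eta_1} \cdots B_{\eta_s} \leqslant B_\eta$ collapses the $B$-product, yielding
\[ |\phi^i_\eta| \leqslant B_\eta \, \widetilde{A}^{|\eta|} \sum_{\substack{s \geqslant 1,\; \mathrm{decomp.} \\ i_1 = i,\; \eta_k \in H_{i_k}}} \frac{P^{\eta_1 + e_i}_{i_2, \ldots, i_s}}{(s-1)!}, \qquad \widetilde{A} := \max(A, A'). \]
The remaining combinatorial sum is precisely the coefficient of $x^\eta$ in the $i$-th component of $\varphi_0 \circ \psi_0$, where $\varphi_0, \psi_0$ are the formal diffeomorphisms with all nonlinear coefficients equal to $1$; by comparison with a scalar analytic majorant such as $g(t) = t/(1-Ct)$, this sum is bounded by $C^{|\eta|}$ for some $C > 0$. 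Hence $\phi \in \tmmathbf{G}_B$ with constant $A'' = C\widetilde{A}$.

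For the inverse, I define $\psi = \varphi^{-1}$ by $\varphi \circ \psi = \mathrm{Id}$. Applying the Fa\`a di Bruno formula to this equation and isolating the top-weight term yields the recursion
\[ \psi^i_\eta = -\varphi^i_\eta - \sum_{\substack{s \geqslant 2,\; \mathrm{decomp.} \\ |\eta_k| < |\eta|}} \frac{P^{\eta_1 + e_i}_{i_2, \ldots, i_s}}{(s-1)!} \, \varphi^i_{\eta_1} \psi^{i_2}_{\eta_2} \cdots \psi^{i_s}_{\eta_s}, \]
which expresses $\psi^i_\eta$ in terms of strictly lower-weight coefficients of $\psi$. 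An induction on $|\eta|$, combined with the same submultiplicativity and majorant arguments, produces a constant $A''$ with $|\psi^i_\eta| \leqslant B_\eta (A'')^{|\eta|}$, so $\psi \in \tmmathbf{G}_B$.

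The main obstacle is the iterated submultiplicativity: the hypothesis only gives $B_{\eta_1} B_{\eta_2} \leqslant B_{\eta_1 + \eta_2}$ when all three indices lie in $H$, whereas the partial sums appearing inside a Fa\`a di Bruno decomposition need not remain in $H$ (which requires at most one coordinate equal to $-1$). The workaround is to reorder each decomposition so that all partial sums remain in $H$---which is possible because $\eta \in H$ and each $\eta_k \in H$, by postponing the summand carrying the negative coordinate---after which submultiplicativity may be applied pair by pair. The geometric bound on the purely combinatorial Fa\`a di Bruno sum is a standard majorant-series exercise (reflecting the analyticity of composition of analytic germs) and represents only a minor technical check.
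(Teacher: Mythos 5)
Your proposal follows essentially the same route as the paper, which itself only sketches the argument and defers the details to \cite{menous_birkh}: the paper's ``majorant series'' method ($\varphi \prec \psi_{B,A}$ and stability of the majorant relation under composition and inversion) is exactly your termwise absolute--value comparison in the Fa\`a di Bruno formula, exploiting the nonnegativity of the integer coefficients $P^{\eta_1 + e_i}_{i_2,\ldots,i_s}$, the submultiplicativity of $B$, and a scalar analytic majorant to bound the residual combinatorial sum geometrically; the inverse is likewise handled by the standard recursion on $|\eta|$.

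The one step I would not accept as written is your treatment of iterated submultiplicativity. You correctly identify the difficulty: $B_{\eta_1}\cdots B_{\eta_s}\leqslant B_{\eta}$ only follows from the pairwise hypothesis if the intermediate sums lie in $H$ (otherwise $B$ is not even defined on them). But your fix --- ``reorder by postponing the summand carrying the negative coordinate'' --- is not an argument: several of the $\eta_k$ may carry a $-1$, in different coordinate positions, and the existence of an ordering keeping every partial sum in $H$ is a combinatorial claim that would itself require a proof (e.g.\ an induction peeling off a last summand whose removal keeps the sum in $H$). Fortunately no reordering is needed: in every nonvanishing term of the Fa\`a di Bruno sum one has $P^{\eta_1+e_i}_{i_2,\ldots,i_s}\neq 0$, which forces $\eta_1+e_i\geqslant e_{i_2}+\cdots+e_{i_s}$ coordinatewise; since $\eta_k\geqslant -e_{i_k}$ for $k\geqslant 2$, every partial sum satisfies $\eta_1+\cdots+\eta_j\geqslant \eta_1-\left(e_{i_2}+\cdots+e_{i_s}\right)\geqslant -e_i$, hence lies in $H_i\subset H$, and the pairwise hypothesis applies directly in the given order. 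With that replacement (and the analogous observation inside the recursion for the inverse), your proof is complete and coincides with the paper's.
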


The complete proof can be found in \cite{menous_birkh}. It relies on majorant series:
let $\varphi (x) = x + u (x)$ in $\tmmathbf{G}$, we say that $\psi (x) = x + v
(x)$ is a majorant series of $\varphi$ ($\varphi \prec \psi$) if,
\[ \forall 1 \leqslant i \leqslant \nu, \forall \eta \in H_i, \hspace{1em} |
   \varphi^i_{\eta} | \leqslant \psi^i_{\eta} \]
For a given set $B$ and $A > 0$, let $\psi_{B, A}$ be the diffeomorphism such
that $C^i_{\eta} (\psi_{B, A}) = B_{\eta} A^{| \eta |}$. It is clear that
$\psi_{B, A}$ is in $\tmmathbf{G}_B$ and $\varphi$ belongs to $\tmmathbf{G}_B$
if and only if there exists $A > 0$ such that
\[ \varphi \prec \psi_{B, A} \]
Now the proof of the theorem relies on classic estimates that gives:
\begin{enumeratenumeric}
  \item If $\varphi_1 \prec \psi_{B, A_1}$ and $\varphi_2 \prec \psi_{B, A_2}$
  then there exists \ $A_3 > 0$ such that $\varphi_1 \circ \varphi_2 \prec
  \psi_{B, A_3}$. In other words, $\tmmathbf{G}_B$ is stable under the
  composition of diffeomorphisms.
  
  \item $\tmop{If} \varphi_1 \prec \psi_{B, A_1}$ then there exists \ $A_2 >
  0$ such that $\varphi_1^{\circ^{- 1}} \prec \psi_{B, A_2}$ and this finally
  proves that $\tmmathbf{G}_B$ is a subgroup.
\end{enumeratenumeric}
Note that the analytic subgroup corresponds to $\tmmathbf{G}_B$ with,
\[ \forall \eta \in H, \hspace{1em} B_{\eta} = 1 \]
Now, using the same ideas as in {\cite{menous_birkh}}, one easily gets that

\begin{theorem}
  Suppose that, the map $\rho^<$, restricted to $\tmop{CK}^{+ \circ}_H$ is
  such that :
  \[ \rho^< (\bullet_{\eta}) = \sum_{1 \leqslant i \leqslant \nu} u^i_{\eta}
     x^{\eta} x_i \partial_{x_i} \]
  with $|u_{\eta}^i | \leqslant B_{\eta} A^{| \eta |}$ for some $A > 0$. If
  $\chi$ is a character on $\tmop{CK}^+_H$ such that, for all forests $F \in
  \tmop{CK}^+_H$,
  \[ | \chi (F) | \leqslant C^{\tmop{gr} (F)} \]
  then the diffeomorphism $\varphi$ such that
  \[ \Theta_{\varphi} = \sum_{F \in \tmop{CK}^+_H} \chi (F) \rho^< (F) \]
  is in $\tmmathbf{G}_B$.
\end{theorem}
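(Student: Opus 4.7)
The plan is to show directly that the coefficients $\varphi^i_\eta$ of $\varphi = S_{\rho^<}(\chi)$ satisfy $|\varphi^i_\eta|\leq B_\eta (A')^{|\eta|}$ for some $A'>0$, which places $\varphi$ in $\tmmathbf{G}_B$ by definition. My approach splits into three steps: a direct extraction of $\varphi^i_\eta$ from the mould--comould expansion, a pointwise monomial estimate via submultiplicativity of $B$, and finally the crucial combinatorial estimate controlling the remaining sum over forests.

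Applying $\Theta_\varphi$ to $x_i$ and using Theorem \ref{th:restrict} to restrict the summation to $\mathcal{F}^+_H$, each operator $\rho^<(F).x_i$ is either zero or a monomial of multidegree $\|F\|+e_i$, so only forests with $\|F\|=\eta$ contribute:
\[
\varphi^i_\eta \,=\, \sum_{\substack{F\in\mathcal{F}^+_H\\ \|F\|=\eta}} \chi(F)\,\kappa^i_F(u),
\]
where $\kappa^i_F(u)$ denotes the coefficient of $x^{\eta+e_i}$ in $\rho^<(F).x_i$. A direct induction on the recursive definition of $\rho^<$ shows that $\kappa^i_F$ is a polynomial with nonnegative rational coefficients whose monomials are of the form $c\,u^{j_1}_{\eta_1}\cdots u^{j_n}_{\eta_n}$, with $\eta_1,\ldots,\eta_n$ the decorations of the vertices of $F$ (each appearing exactly once). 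Using $|u^{j_k}_{\eta_k}|\leq B_{\eta_k}A^{|\eta_k|}$ together with iterated submultiplicativity $B_{\eta_1}\cdots B_{\eta_n}\leq B_{\eta_1+\cdots+\eta_n}=B_\eta$, every such monomial is bounded by $c\,B_\eta A^{|\eta|}$, whence $|\kappa^i_F(u)|\leq B_\eta A^{|\eta|}\kappa^i_F(\mathbf 1)$. Combined with $|\chi(F)|\leq C^{\tmop{gr}(F)}$, this reduces the theorem to the combinatorial estimate
\[
\Sigma_\eta \,:=\, \sum_{\substack{F\in\mathcal{F}^+_H\\ \|F\|=\eta}} C^{\tmop{gr}(F)}\,\kappa^i_F(\mathbf 1) \,\leq\, K^{|\eta|}
\]
for some constant $K$ depending only on $C$ and the ambient data.

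The main obstacle is precisely this combinatorial bound, and it is the raison d'\^etre of passing from sequences in $\tmop{Sh}_H$ or $\tmop{Qsh}_H$ to trees in $\tmop{CK}_H^+$: at the level of sequences the analogous quantity suffers a factorial blow-up coming from the composition of $r$ differential operators (of the type $(t^2\partial_t)^r.t=(r-1)!\,t^{r+1}$ illustrated in the discussion preceding the theorem), but the symmetry--factor normalisation $1/(d_1!\cdots d_k!)$ built into $\rho^<$, intimately linked with Lemma \ref{Foissy} on the duality between $\tmop{GL}_H$ and $\tmop{CK}^\circ_H$, produces exactly the compensation needed to tame the growth to an exponential one. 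I would obtain the bound by a majorant--series argument: replacing $u^j_\eta$ by the universal value $B_\eta A^{|\eta|}$ and $\chi$ by the multiplicative character $\tilde\chi(F):=C^{\tmop{gr}(F)}$ on $\tmop{CK}^+_H$ yields, by the same construction, a substitution automorphism of some diffeomorphism $\tilde\varphi$ that majorises $\varphi$ term-by-term. The key point is that $\tilde\varphi$ is built from manifestly \emph{analytic} majorant data, so that the stability of $\tmmathbf{G}_B$ under composition and inversion recalled just before the theorem forces $\tilde\varphi\in\tmmathbf{G}_B$; extracting the coefficient yields $\Sigma_\eta\leq K^{|\eta|}$ and hence $\varphi\in\tmmathbf{G}_B$.
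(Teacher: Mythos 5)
Your overall strategy is the paper's: extract the coefficients $\varphi^i_{\eta}$, exploit the fact that the coefficients $P^{i_1,\ldots,i_s}_F(u)$ of $\rho^<(F)$ are polynomials with nonnegative coefficients in which each vertex decoration occurs once, use submultiplicativity of $B$ to peel off the factor $B_{\eta}A^{|\eta|}$, and reduce the whole theorem to bounding the universal majorant series attached to the multiplicative character $\xi(F)=C^{\tmop{gr}(F)}$ and the majorant comould $\rho_v$ built from $v=\psi_{B,A}$. Up to that point the argument is sound. The gap is in your last sentence: you assert that the majorant diffeomorphism $\tilde\varphi(x)=\sum_F C^{\tmop{gr}(F)}\rho_v(F).x$ lies in $\tmmathbf{G}_B$ ``because it is built from manifestly analytic majorant data'' and because $\tmmathbf{G}_B$ is stable under composition and inversion, but you never exhibit $\tilde\varphi$ as a composition or an inverse of elements already known to be in $\tmmathbf{G}_B$. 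No such membership is automatic: the entire difficulty of the theorem (recall the $(t^2\partial_t)^r.t=(r-1)!\,t^{r+1}$ discussion) is that an infinite sum of comould operators with geometrically bounded mould coefficients can a priori produce factorially large Taylor coefficients. Your reduction to $\Sigma_{\eta}\leqslant K^{|\eta|}$ is a normalized restatement of the problem, not a solution of it.

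The missing idea --- the only genuinely non-formal step in the paper's proof --- is to invert the \emph{character} rather than attack the series directly. The convolution inverse of $\xi(F)=C^{\tmop{gr}(F)}$ on $\tmop{CK}^+_H$ is supported on forests of one-vertex trees: $\xi^{\ast^{-1}}(\bullet_{\eta_1}\cdots\bullet_{\eta_s})=(-1)^sC^{\tmop{gr}(F)}$ and $\xi^{\ast^{-1}}(F)=0$ for any forest containing a tree with at least two vertices (an easy check from the Connes--Kreimer coproduct). Since $\rho_v(\bullet_{\eta_1}\cdots\bullet_{\eta_s})$ is a differential operator of order $s$, it kills $x_i$ for $s\geqslant 2$, so the compositional inverse collapses to the closed finite expression $\tilde\varphi^{\circ^{-1}}(x)=x-\sum_{\eta}C^{|\eta|}\rho_v(\bullet_{\eta}).x=2x-\frac{1}{C}\,v(Cx)$, which is visibly in $\tmmathbf{G}_B$ because $v=\psi_{B,A}$ is. Only at this point does the group property of $\tmmathbf{G}_B$ apply, yielding $\tilde\varphi\in\tmmathbf{G}_B$ and hence $\varphi\prec\tilde\varphi\in\tmmathbf{G}_B$. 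Without this inversion step your argument does not close.
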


\begin{proof}
  If we consider
  \[ u (x) = (u_1 (x), \ldots, u_{\nu} (x)) \]
  where
  \[ u_i (x) = x_i + \sum_{\eta \in H} \rho^< (\bullet_{\eta}) .x_i = x_i +
     \sum_{\eta \in H_i} u^i_{\eta} x^{\eta} x_i \]
  then $u \prec \psi_{B, A} = v$. We note $\rho_u = \rho^<$ and $\rho_v$ the
  similar morphism such that
  \[ \rho_v (\bullet_{\eta}) = \sum_{1 \leqslant i \leqslant \nu} v^i_{\eta}
     x^{\eta} x_i \partial_{x_i} \]
  For any forest $F = T_1 \ldots T_s$ in $\mathcal{F}^+_H$, we have once \
  again
  \[ \rho^{}_u (T_1 \ldots T_s) = \sum_{1 \leqslant i_1, \ldots, i_s \leqslant
     \nu} P_F^{i_1, \ldots i_s} (u) x^{\|F\| + e_{i_1} + \ldots + e_{i_s}}
     \partial_{x_{i_1}} \ldots \partial_{x_{i_s}} \]
  where the coefficients $P_F^{i_1, \ldots i_s} (u)$ are polynomials in the
  variables $u = \{u^i_{\eta} \}$ with coefficients in $\mathbbm{Q}^+$
  ($P_F^{i_1, \ldots i_s} (u) \in \mathbbm{Q}^+ [u]$). Since the coefficients
  of such polynomials are non-negative, it is clear that
  \[ |P_F^{i_1, \ldots i_s} (u) | \leqslant P_F^{i_1, \ldots i_s} (v) \]
  and if
  \[ \varphi (x) = \Theta_{\varphi} .x = \sum_{F \in \tmop{CK}^+_H} \chi (F)
     \rho_u (F) .x \]
  we have that
  \[ \varphi (x) \prec \sum_{F \in \tmop{CK}^+_H} | \chi (F) | \rho_v (F) .x
     \prec \sum_{F \in \tmop{CK}^+_H} C^{\tmop{gr} (F)} \rho_v (F) .x = \phi
     (x) \in \tmmathbf{G} \]
  The map $\xi$ defined on $\tmop{CK}^+_H$ by $\xi (F) = C^{\tmop{gr} (F)}$ is
  a character and it is easy to check that its inverse is defined by
  \[ \xi^{\ast^{- 1}} (F) = \left\{ \begin{array}{ll}
       1 & \tmop{if} \hspace{1em} F = \emptyset\\
       (- 1)^s C^{\tmop{gr} (F)} & \tmop{if} \hspace{1em} F = \bullet_{\eta_1}
       \ldots \bullet_{\eta_s}\\
       0 & \tmop{otherwise}
     \end{array} \right. \]
  This means that
  \[ \begin{array}{ccc}
       \phi^{\circ^{- 1}} (x) & = & \ds \sum_{F \in \tmop{CK}^+_H} \xi^{\ast^{-
       1}} (F) \rho_u (F) .x\\
       & = & x +\ds \sum_{\eta \in H} \xi^{\ast^{- 1}} (\bullet_{\eta}) \rho_v
       (\bullet_{\eta}) .x\\
       & = & x - \ds \sum_{\eta \in H} C^{\tmop{gr} (\eta)} \rho_v
       (\bullet_{\eta}) .x\\
       & = & 2 x - \ds \left( x + \sum_{\eta \in H} C^{\tmop{gr} (\eta)} \rho_v
       (\bullet_{\eta}) .x \right)\\
       & = & 2 x - \frac{1}{C} v (C x)
     \end{array} \]
  As $v$ is in $\tmmathbf{G}_B$, so is $\phi^{\circ^{- 1}}$ and, since
  $\tmmathbf{G}_B$ is a group, we \ have
  \[ \varphi (x) \prec \phi (x) \in \tmmathbf{G}_B \]
  and $\varphi$ is in $\tmmathbf{G}_B$.
\end{proof}

The previous argument is a systematization of a process that was introduced by
one of us (FM), and implemented in 2 previous papers ({\cite{menous_qdiff}},
{\cite{menous_birkh}}), regarding respectively non--linear q--difference
equations and ``Birkhoff decomposition'' in spaces of Gevrey series.

\subsection{Growth estimates for the arborified moulds}

In order to give a nontrivial application, we show how Brjuno's classical
result on linearization for non resonant fields can be obtained, once we match
the previous estimate on the comould side with another one, regarding the
geometric growth of the arborified mould.

We start with the vector field case and we denote by $M^{\bullet^<}$ the arborescent mould
corresponding to the character $\xi$, for which a closed--form expression was
obtained above.

In order to avoid technicalities in diophantine approximation, in the present
work which is focussed on algebraic constructions, we shall consider vector
fields which satisfy the following strong version of Brjuno's diophantine
condition:
\[ \sum \frac{1}{2^k} \tmop{Log} ( \frac{1}{\Omega (2^{k + 1})}) < \infty
  \]
where $\Omega ( h) = \Omega (h) = \min \left\{ | \langle n, \lambda \rangle |,
n_i \in \mathbbm{Z} \nocomma, | \langle n, \lambda \rangle | > 0 \tmop{and}
\sum n_i \leqslant h \right\}$

Note that, since $\Omega$ is decreasing, the above condition is equivalent to the condition :
\[ S=\sum \frac{1}{2^k} \left | \tmop{Log} ( \frac{1}{\Omega (2^{k + 1})}) \right | < \infty
 \]

\begin{proposition}
  The arborified mould $M^{\bullet^<} $has a geometric growth : there exists a constant
  $K$ such that for any 
decorated forest $F$, we have $|
  M^{F} | \leqslant K^{\tmop{gr} (F)}$.
\end{proposition}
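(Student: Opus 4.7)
\noindent The plan is to start from the closed-form expression for $\xi^<$ given by Proposition~\ref{prop:linarbo}:
\[
M^F \;=\; \xi^<(F) \;=\; \prod_{v \in F} \frac{1}{\langle \lambda, \|t_v\| \rangle},
\]
where the product runs over the vertices of $F$ and $t_v$ is the subtree rooted at $v$ (and the expression factors multiplicatively over the connected components of $F$). Because the total degree is linear on multi-indices, the vector $\|t_v\|$ has total degree equal to $\tmop{gr}(t_v)$, and the definition of $\Omega$ yields the pointwise lower bound $|\langle \lambda, \|t_v\| \rangle| \geq \Omega(\tmop{gr}(t_v))$. Writing $\alpha(h) = \log(1/\Omega(h))$, the desired estimate $|M^F| \leq K^{\tmop{gr}(F)}$ reduces to a uniform inequality of the form
\[
\sum_{v \in F} \alpha(\tmop{gr}(t_v)) \;\leq\; C \cdot \tmop{gr}(F),
\]
with $C$ a constant depending only on $\lambda$.

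To obtain this inequality, I would group the vertices of $F$ dyadically according to the scale $k$ for which $\tmop{gr}(t_v) \in [2^k, 2^{k+1})$, and use the monotonicity of $\alpha$ to bound
\[
\sum_{v \in F} \alpha(\tmop{gr}(t_v)) \;\leq\; \sum_{k \geq 0} N_k \cdot \alpha(2^{k+1}),
\]
where $N_k$ is the number of vertices at scale $k$. The strong Brjuno hypothesis $S = \sum_k 2^{-k}\alpha(2^{k+1}) < \infty$ then provides exactly the decay needed, provided one can pair it with a combinatorial estimate of the form $N_k \leq c \cdot \tmop{gr}(F) \cdot 2^{-k}$: if such a bound is established, the double sum collapses to $c \cdot S \cdot \tmop{gr}(F)$ and the claim follows with $K = e^{cS}$.

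The main obstacle is precisely this combinatorial step. The crude ``disjointness'' count\,---\,that each subtree with $\tmop{gr}(t_v) \geq 2^k$ consumes at least $2^k$ units of weight\,---\,fails for nested subtrees, which overlap along root-to-leaf paths of $F$. The correct accounting is the classical Siegel--Brjuno estimate on trees: at each scale $k$ one restricts attention to the vertices at which $\tmop{gr}(t_v)$ genuinely jumps across the dyadic threshold as one moves toward the root, and a telescoping argument shows that the cumulative weight of such ``resonance-carrying'' vertices is controlled by $c \cdot \tmop{gr}(F) \cdot 2^{-k}$. Once this refined counting is in place, reassembling the scales against the Brjuno sum gives the linear bound on $\sum_v \alpha(\tmop{gr}(t_v))$ and hence the announced geometric estimate $|M^F| \leq K^{\tmop{gr}(F)}$.
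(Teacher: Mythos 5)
There is a genuine gap, and it sits exactly where you locate ``the main obstacle'': your first step reduces the proposition to the inequality $\sum_{v\in F}\alpha(\tmop{gr}(t_v))\leqslant C\,\tmop{gr}(F)$, and that inequality is \emph{false} under the Brjuno condition alone. The pointwise bound $|\langle\lambda,\|t_v\|\rangle|\geqslant \Omega(\tmop{gr}(t_v))$ discards precisely the information that makes the proof work. Test it on a chain of $r$ vertices each carrying a decoration of degree $1$: the subtree degrees are exactly $1,2,\dots,r$, so your target inequality becomes $\sum_{j=1}^{r}\alpha(j)\leqslant Cr$ for all $r$; since $\alpha=\log(1/\Omega)$ is nondecreasing this forces $\alpha(\lceil r/2\rceil)\leqslant 2C$, i.e.\ $\alpha$ bounded and $\Omega$ bounded away from zero --- the trivial case with no small denominators at all. (Concretely, $\alpha(h)\sim h/\log^{2}h$ satisfies Brjuno's condition $\sum_k 2^{-k}\alpha(2^{k+1})<\infty$ yet gives $\sum_{j\leqslant r}\alpha(j)\sim r^{2}/(2\log^{2}r)$.) The same chain also refutes the combinatorial bound you ask for: with scales defined by $\tmop{gr}(t_v)\in[2^{k},2^{k+1})$, the number of vertices at scale $k$ is about $2^{k}$, not $O(\tmop{gr}(F)\,2^{-k})$.

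The missing idea is that the dyadic slicing must be performed on the \emph{size of the small divisor}, not on the degree of the subtree. The paper's counting lemma defines $N_k(F)$ as the number of subtrees $t$ with $\tfrac12\Omega(2^{k+1})\leqslant|\langle\lambda,\|t\|\rangle|<\tfrac12\Omega(2^{k})$ and proves $N_k(F)\leqslant 2\nu\,s\,2^{-k}$ (and $N_k(F)=0$ when $2^{k}>s$); this is the genuine Siegel--Brjuno content, resting on the fact that two \emph{nested} subtrees whose divisors are both that small must differ by a large amount of degree. Keeping the actual divisors in the closed form and grouping them by these slices gives $|M^{F}|\leqslant 2^{r}\prod_{k}\Omega(2^{k+1})^{-N_k(F)}\leqslant 2^{r}\exp(2\nu sS)$, which is the geometric bound. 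Your last paragraph names the right classical lemma but misdescribes what it counts: vertices at which $\tmop{gr}(t_v)$ crosses a dyadic threshold number only $O(\log s)$ per root-to-leaf path, and in any case controlling a distinguished subset of vertices cannot bound the sum $\sum_{v}\alpha(\tmop{gr}(t_v))$ taken over \emph{all} vertices. As written, the argument does not close; it must be restructured around the divisor-size slicing from the start.
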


All proofs of normalization results under Brjuno's arithmetical condition rely
at some point on an key estimate, usually known as a ``Brjuno's counting
lemma'' (see e.g. the classical paper by J. Poeschel {\cite{poeschel}} for
a particularly clear exposition of this, in the case of diffeomorphisms). The
proof of the previous proposition will unsurprisingly also crucially depend as well on
a version of a counting lemma which we give below. In the form that we use
here, the estimate is proved in the paper {\cite{carletti}}, for the version of
the lemma that is relevant in the case of diffeomorphisms). The paper
{\cite{gallavotti}}, for example, explains the way trees appear naturally in
this context; it is then straightforward to translate the version of the
counting lemma for fields which is contained in Brjuno's seminal paper in the
language of trees.

Note however that our presentation is different and totally independant of
the one used in {\cite{gallavotti}} and {\cite{carletti}} but it is the very
same counting argument that is crucial, as in any other proofs of results
involving Brjuno's condition. The proof of the proposition itself will simply
consist in regrouping subtrees in ``slices'' that are determined by a total weight
comprised between 2 successive values of $\Omega ( 2^l)$.

\begin{lemma}
  (Tree version of ``Brjuno's counting lemma'')
  
  Let $F$ a decorated forest with $r$ vertices  and let $s   = \tmop{gr} (F)$ (we consider here only forests such that
  $\langle \tmmathbf{\lambda}, \| F \| \rangle \neq 0$). If, for any nonnegative integer $k$, $N_k(F)$ is the number of subtrees
  $t$ of $F$ that satisfy the following
  inequality :
  \[  \frac{1}{2} \Omega (2^{k + 1}) \leqslant \langle \tmmathbf{\lambda}, \|
     t \| \rangle < \frac{1}{2} \Omega (2^k), \]
     then
  \[ N_k(F) \leqslant \left\{ \begin{array}{l}
       0 \tmop{if} s<2^k\\
       \ds 2 \nu \frac{s}{2^k} - 1 \tmop{if} 2^k \leqslant s
     \end{array} \right. \]
\end{lemma}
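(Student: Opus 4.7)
The plan is to combine an elementary consequence of the definition of $\Omega$ with a Brjuno-style induction, adapted here to the tree framework. The first, easy ingredient is the observation that any subtree $t$ counted by $N_k(F)$ satisfies $\tmop{gr}(t) > 2^k$. This follows from the identity $\sum_j (\|t\|)_j = \tmop{gr}(t)$, itself a consequence of $|\eta| = \sum_j \eta_j$ for $\eta \in H$, together with the definition of $\Omega$: if $\tmop{gr}(t) \leq 2^k$, then $\|t\| \in \mathbb{Z}^\nu$ is an admissible competitor in the minimization defining $\Omega(2^k)$, so $|\langle \lambda, \|t\|\rangle| \geq \Omega(2^k) > \tfrac{1}{2}\Omega(2^k)$, contradicting the defining range of $N_k$. (We use here that $\langle \lambda, \|t\|\rangle \neq 0$ for any contributing $t$, since this quantity is bounded below by $\tfrac{1}{2}\Omega(2^{k+1}) > 0$.) This already yields the first half of the statement: if $s < 2^k$, then every subtree $t$ of $F$ has $\tmop{gr}(t) \leq s < 2^k$, whence $N_k(F) = 0$.

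For the second bound, I would proceed by induction on the number of vertices of $F$. Consider the collection of \emph{maximal} contributing subtrees $t_1, \ldots, t_p$ of $F$, maximal with respect to inclusion; they are pairwise vertex-disjoint. By the preceding grading observation each $t_j$ satisfies $\tmop{gr}(t_j) > 2^k$, so $\sum_j \tmop{gr}(t_j) \leq s$ gives $p < s/2^k$. Every other contributing subtree lies strictly inside some $t_j$, and within each $t_j$ a nested pair $t \subsetneq t'$ of contributing subtrees satisfies $|\langle \lambda, \|t'\| - \|t\|\rangle| < \Omega(2^k)$ by the triangle inequality; hence $\tmop{gr}(t') - \tmop{gr}(t) > 2^k$ whenever this scalar is nonzero, and otherwise $\|t'\| - \|t\|$ lies in the $\mathbb{Z}$-lattice kernel of $\langle \lambda, \cdot \rangle$, which has rank at most $\nu$. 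Iterating this alternative along the nested chains inside each $t_j$ and summing over $j$ delivers the bound $2\nu\, s/2^k - 1$, consistent with the $\nu = 1$ form of Brjuno's original counting lemma.

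The main obstacle is the careful bookkeeping of these ``resonant'' differences when implementing the induction. A naive chain-length argument only produces a factor of $2$; the factor $\nu$ must come from the fact that at most $\nu$ linearly independent resonant differences can accumulate along a single nested chain, and one has to track that this dimensional overhead is preserved when aggregating the maximal branches with the interior recursion. This is precisely the counting argument, combinatorial and linear-algebraic in nature, that is worked out in {\cite{carletti}} (for diffeomorphisms) and in Brjuno's original paper (for vector fields), to which {\cite{gallavotti}} gives a particularly tree-friendly exposition; my plan is simply to transcribe that argument into the notation of $\mathcal{F}_H$, exploiting only the disjoint vertex-decomposition of $F$ provided by the maximal contributing subtrees together with the grading identity $\sum_j (\|t\|)_j = \tmop{gr}(t)$ established at the outset.
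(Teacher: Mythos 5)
The paper does not in fact prove this lemma: it is stated as a tree--language transcription of Brjuno's classical counting lemma, and the surrounding text explicitly delegates the proof to \cite{brjuno}, \cite{carletti} and \cite{gallavotti}. Your proposal is therefore at least as complete as the paper's own treatment. The first half of your argument --- any subtree $t$ counted by $N_k(F)$ must satisfy $\tmop{gr}(t)>2^k$, because the identity $\sum_i(\|t\|)_i=\tmop{gr}(t)$ would otherwise make $\|t\|$ an admissible competitor in the minimum defining $\Omega(2^k)$ and force $|\langle\lambda,\|t\|\rangle|\geqslant\Omega(2^k)$ --- is correct and completely settles the vanishing case $s<2^k$, which the paper does not even record. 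For the second bound, your reduction (the maximal contributing subtrees are vertex--disjoint, each of grading $>2^k$, hence fewer than $s/2^k$ of them; nested contributing subtrees differ either by grading $>2^k$ or by a resonant integer vector) is the standard skeleton, but, as you yourself acknowledge, it does not by itself produce the factor $\nu$ in $2\nu s/2^k-1$: bounding how many resonant differences can accumulate along a nested chain is precisely the content of Brjuno's counting argument, and your stated plan is to transcribe it from the references. Since that is exactly what the authors do, there is no divergence from the paper; just be aware that, taken as a self-contained proof, the case $2^k\leqslant s$ remains open until that bookkeeping is actually carried out.
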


Let us now consider a forest $F$ with $r$ vertices decorated by $\eta_1, \ldots, \eta_r$. Let $s =
\tmop{gr} (F)$ and let $l$ be the integer such that $2^l
\leqslant s < 2^{l + 1}$. The closed--form expression of the mould (see proposition \ref{prop:linarbo}) is given by:
\[ M^{F} = \prod_{i = 1}^r \frac{1}{\langle \lambda, \|t_i \| \rangle}\]
where, for $1 \leqslant i \leqslant r$ if $t_i$ is the subtree of $F$ whose root is
  labelled by $\eta_i$.
   
We immediately obtain :
\[ | M^{F} | \leqslant \prod_{k = 0}^l \left ( \frac{2}{\Omega (2^{k +
   1})} \right )^{N_k  (F)} = 2^r\prod_{k=0}^l\left ( \frac{1}{\Omega (2^{k +
   1})} \right)^{N_k  (F)} \]
Thanks to the previous lemma, for indices $k\leqslant l$, $N_k(F)\leqslant  2 \nu \frac{s}{2^k}$ thus
\[ \begin{array}{rcl}
| M^{F} | &\leqslant & \ds 2^r \exp \left (\sum_{k=0}^l N_k(F) \tmop{Log} ( \frac{1}{\Omega (2^{k + 1})})\right ) \\
 &  \leqslant   & 2^r \ds\exp \left (\sum_{k=0}^l 2 \nu \frac{s}{2^k} \left | \tmop{Log} ( \frac{1}{\Omega (2^{k + 1})}) \right |\right ) \\
 & \leqslant & 2^r \exp(2\nu s S) \leqslant C^{\tmop{gr}(F)} 
 \end{array}\]
 with $C=2\exp(2\nu S)$.

The case of diffeomorphisms is settled in exactly the same way. We denote by
$N^{\bullet^<}$ the linearizing mould that corresponds to the character $\chi$, and we
have the following:

\begin{proposition}
  The arborified mould $N^{\bullet^<}$has a geometric growth : there exists a constant
  $D$ such that $| N^{F} | \leqslant D^{\tmop{gr}
  (F)}$
\end{proposition}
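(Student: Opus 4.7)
The plan is to mirror exactly the strategy used just above for vector fields, substituting the multiplicative small denominators $l^{\|t\|} - 1$ for the additive ones $\langle \lambda, \|t\| \rangle$. The starting point is the closed--form expression for the arborified symmetrel character already obtained, which yields
\[ N^F = \prod_{i=1}^{r} \frac{1}{l^{\|t_i\|} - 1}, \]
where $t_1,\dots,t_r$ are the subtrees of $F$ rooted at each vertex (with $r$ the total number of vertices of $F$). This reduces the estimation of $|N^F|$ to a product over subtrees of quantities $|l^{\|t_i\|}-1|^{-1}$, exactly as in the field case.

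Next, I would state and use the diffeomorphism analogue of the tree version of Brjuno's counting lemma: for a decorated forest $F$ with $\text{gr}(F) = s$, and for each integer $k \geq 0$, the number $N_k(F)$ of subtrees $t$ of $F$ satisfying
\[ \tfrac{1}{2}\Omega(2^{k+1}) \leq |l^{\|t\|} - 1| < \tfrac{1}{2}\Omega(2^k) \]
is bounded by $0$ when $s < 2^k$ and by $2\nu s/2^k - 1$ when $2^k \leq s$, with $\Omega(h) = \min\{|l^{m_1}_1\cdots l^{m_\nu}_\nu - 1|: \sum m_i \leq h\}$. As indicated in the text, this is literally the counting lemma established in \cite{carletti} for diffeomorphisms, so no new combinatorial work is required; the only point is to rephrase it in the tree language, which is immediate since each subtree contributes one factor indexed by its total weight $\|t\|\in \bar{H}$.

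With the counting lemma in hand, the rest is a verbatim repeat of the field argument. Fix $F$ with $\text{gr}(F)=s$ and let $\ell$ satisfy $2^\ell \leq s < 2^{\ell+1}$. Slicing the product over subtrees according to the dyadic thresholds $\Omega(2^{k+1})$, one obtains
\[ |N^F| \leq 2^r \prod_{k=0}^{\ell} \Bigl(\frac{1}{\Omega(2^{k+1})}\Bigr)^{N_k(F)}
  \leq 2^r \exp\Bigl(\sum_{k=0}^{\ell} \frac{2\nu s}{2^k}\,\bigl|\log \tfrac{1}{\Omega(2^{k+1})}\bigr|\Bigr)
  \leq 2^r \exp(2\nu s\, S), \]
where $S = \sum_k 2^{-k}|\log(1/\Omega(2^{k+1}))|$ is finite by the diophantine hypothesis. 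Since $r \leq s = \text{gr}(F)$, this gives $|N^F| \leq D^{\text{gr}(F)}$ with $D = 2\exp(2\nu S)$.

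The only real obstacle is purely the bookkeeping for the multiplicative counting lemma: one must ensure that the standard proof (as given in \cite{carletti}) does apply when the subtrees of a forest, rather than nodes of a Brjuno tree in the classical sense, are the objects being counted. This amounts to checking that distinct subtrees of $F$ have distinct ``positions'' (and hence can be indexed without overcounting), which follows directly from the definition of the poset structure on $F$. Everything else is formally identical to the vector field proposition, so once the lemma is stated the estimate follows essentially by inspection.
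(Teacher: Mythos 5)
Your proposal is correct and follows exactly the route the paper takes: it invokes the closed-form expression $N^F=\prod_i (l^{\|t_i\|}-1)^{-1}$ established earlier, adapts Brjuno's counting lemma to the multiplicative small denominators (citing Carletti for the diffeomorphism version, as the paper does), and then repeats the dyadic slicing estimate verbatim from the vector field case. The paper's own proof is just a two-line remark to this effect, so your write-up is simply a fuller version of the same argument.
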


The proof goes along the same lines as for vector fields, using instead the
following closed form:
\[ N^{F} = \prod_{i = 1}^r \frac{1}{e^{2 \pi i \langle \lambda, \|t_i \| \rangle}-1}\]
where, for $1 \leqslant i \leqslant r$ if $t_i$ is the subtree of $F$ whose root is
  labelled by $\eta_i$.

and applying the relevant counting lemma as in {\cite{carletti}}.

\subsection{The analytic normalization scheme with $\tmop{CK}_H^+$}

Let us recollect now the scheme for linearizing a non resonant dynamical
system, using the Hopf algebra $\tmop{CK}_H^+$ :
\begin{enumeratenumeric}
  \item We express the equation regarding the normalizing substitution
  automorphism as an equation on characters of $\tmop{CK}_H^+$
  
  \item We solve this equation, obtaining this way a well-defined character,
  even for forests displaying ``fake resonances'' for some of their subtrees
  
  \item We prove some geometrical growth estimate for this character, by using
  the Diophantine hypothesis on the spectrum
  
  \item We match this with the geometric growth for the comould part in the
  expansion
  
  \item We obtain a convergent series of operators, which makes it possible
  to conclude to the analyticity of the transformation thus constructed
\end{enumeratenumeric}
So in fact, stricly speaking, we don't need to arborify moulds, we can work \
from the outset at the arborescent level, and directly at the level of the
algebra $\tmop{CK}^+$, which is the one the underlies all the computations,
and for which no fake obstruction remain. However, plain (i. e. non
arborescent) moulds are nevertheless very useful because it is usually easier
to guess a closed--form expression for them, before proving that their
arborescent counterparts also have a closed--form of the same kind (and this
is a very general phenomenon for the use of arborification, cf
\cite{menous_qdiff} and \cite{menous_birkh}).

To dispell any idea that the scheme we have described in the present text is
too special and only limited to giving a new proof of already well known
results achieved by common methods, let us indicate 2 directions:

-- A natural question is the linearization of nonresonant dynamical systems
for data of various classes of regularity. In {\cite{carletti}}, the author
proved new results of linearization for diffeomorphisms or vector fields which
are formal series with Gevrey growth estimates, under a Brjuno condition. It
is straightforward to get the same results with the mould apparatus, using the
approach detailed in the present text. The algebraic constructions are
{\tmem{exactly}} the same, all the results on the mould side can be used
unchanged, the only supplementary thing is to show the geometric growth for
the comould part, adapted to spaces of Gevrey series, instead of analytic
ones, which is easy. Now, the point is that in order to go beyond such results
performed on {\tmem{formal}} spaces of series with some growth conditions, to
tackle the same question for {\tmem{functional spaces}}, e. g. data which are
summable in one variable, or multisummable, or resurgent, the same scheme
remains valid in the mould/comould formalism, whereas under other approaches 
would require ad hoc estimates that would be quite difficult to prove.

-- Next we can consider the question of normalization of resonant local
dynamical systems; there, linearization is generically not possible using
formal series, but there are simple normal forms and the normalizing series
are generically divergent ({\cite{martinet-ramis}}, {\cite{snag}}). The
substitution automorphisms for the normalizing transformations can be
expressed by mould/comould expansions, where the moulds take their values in
some algebra $\mathcal{R}$ of resurgent functions {\cite{snag}}. In the
presence of diophantine small denominators, the
arborification/coarborification machinery is used in the same way as in the
present paper; in this Hopf--algebraic presentation, arborification is a
factorization of characters from $\tmop{CK}_{}$ to the (commutative) algebra
$\mathcal{R}$ and the comould constructions are exactly the same (and
$\tmop{CK}_+$ plays an important role, there, too). All the constructions and
theorems are already in Ecalle's foundational papers, but with arguments that
are very concise; the presentation we give yield easy proofs of algebraic
properties of the arborification formalism and makes it possible to connect it
to some very recent work in algebraic combinatorics. Applications of
arborescent moulds go much further than its original domain of application,
namely irregular singularities of local dynamical systems : Stochastic
Processes, in particular the theory of rough paths is one striking example
(see in particular section 4.2 of {\cite{foissy_unter}}, where the concept of
extension is {\tmem{exactly}} the factorization of characters as we have
formulated it; see also {\cite{ef_et_al}}); the fast expanding algebraic
theory of non--linear control theory, with Hopf algebraic formulations of
(Lie--)Butcher series is another one ({\cite{lund-mk}}).

{\color{black} \section{Conclusion.}\label{s:concl}

Ecalle's mould--comould formalism has been in the present text given a
presentation in terms of some Hopf algebras (Fa{\`a} di Bruno, shuffle,
quasishuffle, Connes--Kreimer, Grossman--Larson...) which are by now standard
objects in algebraic combinatorics. In this way, symmetral moulds appear as
characters of a decorated shuffle Hopf algebra, and symmetrel ones as
characters of a quasishuffle one. Next we have shown that arborification
(resp. contracting arborification) of moulds is the outcome of a factorization
of characters, by using a universal property satisfied by Connes-Kreimer Hopf
algebra.

Then, going to the graded duals, we have been able to {\tmem{characterize}}
the fundamental process of {\tmem{homogeneous coarborification}} in a simple
way, and consequently easily obtaining justifications of its properties, by
building on known facts regarding Grossman--Larson Hopf algebra.

We have introduced a {\tmem{subalgebra of the decorated Connes--Kreimer
algebra}} which underlies the calculations of normalization of analytic
dynamical systems at singularities. Namely, computing a normalizing
transformation will amount to finding a character of this algebra, which
satisfies a particular equation that directly comes from the normalization
relation itself. In the present paper, we have illustrated the method by the
well-known problem of linearization of non--resonant dynamical systems in any
dimension, in the presence of small denominators. In problems involving
resonances together with small denominators, the same Hopf--algebraic
apparatus governs the calculations and the only thing that changes is that the
characters are not scalar any more but take their values in relevant
algebras of resurgent functions.

\vspace{1cm}

Fr{\'e}d{\'e}ric Fauvet, IRMA, Universit{\'e} de Strasbourg, 7 rue Descartes

67084 \ Strasbourg Cedex, France. {\small\tt fauvet@math.unistra.fr}

\vspace{0.5cm}

Fr{\'e}d{\'e}ric Menous, B{\^a}t. 425, Universit{\'e} Paris-Sud, 91405 Orsay
Cedex, France. 

{\small\tt Frederic.Menous@math.u-psud.fr}


\begin{thebibliography}{99}

\bibitem{arnold} V. I. Arnold, Geometrical Methods in the Theory of Ordinary Differential
Equations. Springer, 1983.

\bibitem{brjuno} A. D. Brjuno, Analytic form of differential equations. Trans. Moscow Math. Soc. 25 (1971), 131288, and 26 (1972), 199239.

\bibitem{carletti} T. Carletti, The Lagrange inversion formula on non-Archimedean fields. Non-Analytical Form of Differential and Finite Difference Equations, Discrete Contin. Dyn. Syst. 9 (2003), no. 4, 835--858

\bibitem{cayley} A. Cayley, On the theory of the analytical forms called trees, Philosophical Magazine Series 4, 13(85), 1857.

\bibitem{cal-ef-man} D. Calaque, K. Ebrahimi--Fard and D. Manchon, Two interacting Hopf algebras of trees: A Hopf-algebraic approach to composition and substitution of B-series. Advances in Applied
Mathematics, 47(2), 2011.

\bibitem{chapoton} F. Chapoton, The anticyclic operad of moulds, International Mathematics Research Notices 2007 (2007) rnm078 

\bibitem{ck} A. Connes and D. Kreimer, Hopf algebras, renormalization, and noncommutative geometry, Comm. Math. Phys. 199 (1998), 203242

\bibitem{ef-manchon_irma} K. Ebrahimi--Fard and D. Manchon, The combinatorics of Bogoliubov's recursion in renormalization,  in Renormalization and Galois Theories, A. Connes, F. Fauvet and J.-P. Ramis (eds.), EMS Publ. House, Zurich, 2009

\bibitem{snag} J. Ecalle, Singularit\'es non abordables par la g\'eom\'etrie. (French) [Singularities
that are inaccessible by geometry] Ann. Inst. Fourier (Grenoble) 42 (1992),
no. 1-2, 73--164

\bibitem{ecallevallet} J. Ecalle and B. Vallet, The arborification--coarborification transform: analytic, combinatorial, and algebraic aspects. Ann. Fac. Sci. Toulouse Math. (6) 13 (2004), no. 4,
575--657

\bibitem{ef_et_al}     K. Ebrahimi--Fard, A. Lundervold, S.J. A. Malham, H.Munthe-Kaas and H. Wiese, Algebraic structure of stochastic expansions and efficient simulation, arXiv:1112.5571 

\bibitem{figuer} H. Figueroa and J. Gracia--Bondia, Combinatorial Hopf algebras in quantum field theory I, Rev.Math.Phys. 17 (2005) 

\bibitem{car_kef_fig} J. F. Carinena, K. Ebrahimi--Fard, H. Figueroa and  J. M. Gracia-Bondia,  Int. J. Geom. Meth. Mod. Phys. 4, 577-646, 2007 

\bibitem{foissy1} L. Foissy, Les alg\`ebres de Hopf des arbres enracin\'es d\'ecor\'es I, Bull. Sci. Math., vol 126, 3, 2002, 193--239

\bibitem{foissy2} L.Foissy, Les alg\`ebres de Hopf des arbres enracin\'es d\'ecor\'es II, Bull. Sci. Math., vol 126, 2002

\bibitem{foissy_unter} L. Foissy and J. Unterberger, Ordered forests, permutations and iterated integrals,  arXiv:1004.5208

\bibitem{gallavotti} G. Galavotti, Exact Renormalization Groupe, IHP S\'eminaire Bourbaphy, CNRS, 2002

\bibitem{gl1} R.Grossman and R. J. Larson, Hopf-algebraic structure of families of trees, J. Algebra, Vol 126, 1989, no1, 184--210

\bibitem{gl2} R. Grossman and R. J. Larson, Hopf-algebraic structure of combinatorial objects and differential operators, Israel J. Math., Vol 72, 1990, no 1-2, 109--117

\bibitem{hoffman} M. Hoffman, Commutative Hopf algebras of trees and (quasi)symmetric functions, in Renormalization and Galois Theories, A. Connes, F. Fauvet and J.-P. Ramis (eds.), EMS Publ. House, Zurich, 2009, pp. 209-227 

\bibitem{il-yak} Y. Ilyashenko and S. Yakovenko, Lectures on analytic differential equations, GTM vol. 86, AMS, 2008

\bibitem{kreimer_chen}  D. Kreimer, Chen's Iterated Integral represents the Operator Product Expansion, Adv.Theor.Math.Phys. 3 (1999) 627--670 

\bibitem{manchon}  D. Manchon, Hopf algebras, from basics to applications to renormalization, Comptes Rendus des Rencontres Mathematiques de Glanon 2001 (published in 2003), arXiv:math/0408405

\bibitem{martinet-ramis} J. Martinet and J.--P. Ramis, Probl\`emes de modules pour les \'equations diff\'erentielles non lin\'eaires du premier ordre, Publ. Math. I.H.E.S., Vol. 55, 1982, 64--164

\bibitem{menous_birkh} F. Menous, On the stability of some groups of formal diffeomorphisms by the Birkhoff decomposition. Adv. Math. 216 (2007), no. 1, 1--28

\bibitem{menous_qdiff} F. Menous, An example of local analytic q-difference equation: analytic classification, Ann. Fac. Sci. Toulouse Math. (6) 15 (2006), no. 4, 773--814

\bibitem{menous_nov_thib} F. Menous, J.--C. Novelli and J.--Y. Thibon, Mould calculus, polyhedral cones, and characters of combinatorial Hopf algebras, arXiv:1109.1634 

\bibitem{lund-mk} A. Lundervold, H. Z. Munthe-Kaas, On algebraic structures of numerical integration on vector spaces and manifolds, arXiv:1112.4465 

\bibitem{murua} A. Murua, The Hopf algebra of rooted trees, free Lie algebras, and Lie series. Foundations of Computational Mathematics, 6(4), 387--426, 2006.

\bibitem{poeschel} J. P\"oschel, On invariant manifolds of complex analytic mappings near fixed points, Expo. Math. 4 (1986), 97--109

\bibitem{russman} H. R\"ussmann, On the convergence of power series transformations of analytic
mappings near a fixed point into normal form. Preprint IHES, 1977.

\bibitem{thibon_et_al} F. Chapoton, Florent Hivert, J.--C. Novelli and J.--Y.Thibon, An operational calculus for the Mould operad, arXiv:0710.0349

\bibitem{zhao1} W. Zhao, A noncommutative symmetric system over the Grossman-Larson Hopf algebra of labeled rooted trees. J. Algebraic Combin. 28 (2008), no. 2, 235--260

\bibitem{zhao2} W. Zhao, Differential operator specializations of noncommutative symmetric functions, Adv. Math. 214 (2007), no. 2, 639--665

\bibitem{zhao3} W. Zhao, Noncommutative symmetric systems over associative algebras. J. Pure Appl. Algebra 210 (2007), no. 2, 363--382
  
\end{thebibliography}
\end{document}